\def\NZQ{\mathbb}               
\def\ZZ{{\NZQ Z}}
\def\RR{{\NZQ R}}
\def\KK{{\NZQ K}}
\def\frk{\mathfrak}               
\def\Phi{{\frk N}}
\def\ab{{\mathbf a}}
\def\bb{{\mathbf b}}
\def\eb{{\mathbf e}}
\def\tb{{\mathbf t}}
\def\wb{{\mathbf w}}
\def\xb{{\mathbf x}}
\def\opn#1#2{\def#1{\operatorname{#2}}} 
\opn\gr{gr}
\def\Mc{{\mathcal M}}
\def\Sc{{\mathcal S}}
\def\Pc{{\mathcal P}}
\def\Mc{{\mathcal M}}
\newtheorem{Theorem}{Theorem}[section]
\newtheorem{Lemma}[Theorem]{Lemma}
\newtheorem{Corollary}[Theorem]{Corollary}
\newtheorem{Proposition}[Theorem]{Proposition}
\theoremstyle{definition}
\newtheorem{Example}[Theorem]{Example}
\newtheorem{Conjecture}[Theorem]{Conjecture}
\let\epsilon\varepsilon
\let\phi=\varphi
\let\kappa=\varkappa
\opn\dis{dis}
\opn\height{height}
\opn\dist{dist}
\def\pnt{{\raise0.5mm\hbox{\large\bf.}}}
\opn\Lex{Lex}
\opn\conv{conv}
\numberwithin{equation}{section}
\title{Toric ideals of matching polytopes and edge colorings}
\author{Kenta Mori, Ryo Motomura, Hidefumi Ohsugi and Akiyoshi Tsuchiya}
\address{Kenta Mori,
	Department of Mathematical Sciences,
	School of Science,
	Kwansei Gakuin University,
	Sanda, Hyogo 669-1330, Japan}
\email{k-mori@kwansei.ac.jp}
\address{Ryo Motomura,
Department of Information Science,
Faculty of Science,
Toho University,
2-2-1 Miyama, Funabashi, Chiba 274-8510, Japan} 
\email{6524012m@st.toho-u.ac.jp}
\address{Hidefumi Ohsugi,
	Department of Mathematical Sciences,
	School of Science,
	Kwansei Gakuin University,
	Sanda, Hyogo 669-1330, Japan} 
\email{ohsugi@kwansei.ac.jp}
\address{Akiyoshi Tsuchiya,
Department of Information Science,
Faculty of Science,
Toho University,
2-2-1 Miyama, Funabashi, Chiba 274-8510, Japan} 
\email{akiyoshi@is.sci.toho-u.ac.jp}
\keywords{matching polytope, Birkhoff polytope, flow polytope, stable set polytope, toric ideal, coloring}
\subjclass[2020]{05C15, 13P10, 13F65}
\begin{document}

\begin{abstract}
In the present paper, we investigate the maximal degree of minimal generators of the toric ideal of the matching polytope of a graph.
It is known that the toric ideal associated to a bipartite graph is generated by binomials of degree at most $3$.
We show that this fact is equivalent to a result in the theory of edge colorings of bipartite multigraphs.
Moreover, a characterization of bipartite graphs whose toric ideals are generated by quadratic binomials is given.
Finally, we discuss the maximal degree of minimal generators of the toric ideal associated to a general graph and give a conjecture.
\end{abstract}

\maketitle

\section{Introduction}
In the present paper, we discuss a relationship between an algebraic property of toric ideals arising from graphs and a combinatorial property of edge colorings of multigraphs.
Throughout this paper, we assume that a graph is simple, namely, it has no loops and no multiple edges and, a multigraph has no loops.
Let $G$ be a graph with the vertex set $V(G)=[d]:=\{1,2,\ldots,d\}$ and the edge set $E(G) = \{e_1,\dots, e_n\}$.
A \textit{matching} of $G$ is a set of pairwise non-adjacent edges of $G$, and
a \textit{perfect matching} of $G$ is a matching that covers every vertex of $G$.
Let $M(G)$ (resp.~$PM(G)$) denote the set of all matchings (resp.~perfect matchings) of $G$.
Given a subset $M \subset E(G)$, we associate the $(0,1)$-vector $\rho(M)=\sum_{e_j \in M} \eb_j \in \RR^n$. 
Here $\eb_j$ is the $j$th unit coordinate vector in $\RR^n$. For example, $\rho(\emptyset)=(0,\ldots,0) \in \RR^n$.
Then the (\textit{full}) \textit{matching polytope} $\mathcal{M}_G$ of $G$ is defined as the convex hull
\[
\mathcal{M}_G=\conv\left\{\rho(M) : M \in M(G)\right\}
\]
and the \textit{perfect matching polytope} $\mathcal{P}_G$ of $G$ is defined as
\[
\mathcal{P}_G=\conv\left\{\rho(M) : M \in PM(G)\right\}.
\]
Note that $\mathcal{P}_G$ is a face of $\Mc_G$.
Moreover, the perfect matching polytope of a complete bipartite graph $K_{d,d}$ is called the \textit{Birkhoff polytope}, denoted by $\mathcal{B}_d$.

In \cite{Diaconis2006}, it was conjectured that the toric ideal $I_{\mathcal{B}_n}$ of the Birkhoff polytope $\mathcal{B}_n$ is generated by binomials of degree at most $3$, and this conjecture was shown in \cite{Yamaguchi2014}.
Moreover, in \cite{Domokos2016}, by using this result, the toric ideal of a flow polytope is generated by binomials of degree at most $3$.
For a homogeneous ideal $I$, let $\omega(I)$ denote the maximal degree of  minimal generators of $I$.
Since the matching polytope of a bipartite graph is unimodularly equivalent to a flow polytope (see Appendix \ref{sect:app}), the following result holds:
\begin{Theorem}[\cite{Domokos2016}]
\label{thm:bip_upper}
    For a bipartite graph $G$, one has $\omega(I_{\Mc_G}) \leq 3$.
\end{Theorem}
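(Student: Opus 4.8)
The plan is to exhibit $\Mc_G$ explicitly as (a polytope unimodularly equivalent to) a flow polytope and then quote the result of \cite{Domokos2016} that the toric ideal of any flow polytope is generated in degree at most $3$. Fix a bipartition $V(G)=U\sqcup W$. Form a network $N_G$ on the vertex set $\{s,t\}\cup U\cup W$ with an arc $s\to u$ for each $u\in U$, an arc $u\to w$ for each edge $\{u,w\}\in E(G)$ with $u\in U$ and $w\in W$, and an arc $w\to t$ for each $w\in W$, every arc having capacity $1$, and with the value of the flow left free. Imposing conservation at the vertices of $U\cup W$, a nonnegative integral flow decomposes into internally disjoint directed paths $s\to u\to w\to t$, and the capacity bounds on the arcs at $s$ and at $t$ force the corresponding middle arcs to form a matching of $G$; conversely every matching lifts to such a flow. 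Hence the lattice points of the flow polytope $\Fc_{N_G}$ are in bijection with $M(G)$, and forgetting the coordinates attached to the arcs at $s$ and $t$ recovers precisely the vectors $\rho(M)$.

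The projection $\pi\colon \Fc_{N_G}\to\Mc_G$ onto the coordinates indexed by the ``middle'' arcs $u\to w$ is the desired unimodular equivalence: conservation writes the coordinate on $s\to u$ as $\sum_{w:\{u,w\}\in E(G)}x_{uw}$ and the coordinate on $w\to t$ as $\sum_{u:\{u,w\}\in E(G)}x_{uw}$, both $\ZZ$-linear in the middle coordinates, so $\pi$ is injective on $\Fc_{N_G}$ and admits an integral section; thus it maps $\Fc_{N_G}$ bijectively onto $\Mc_G$ and identifies the relevant lattices. Since unimodularly equivalent lattice polytopes have isomorphic graded toric rings --- each being the semigroup ring of the cone over the polytope placed at height $1$ --- their toric ideals share the same multiset of minimal generator degrees, and in particular $\omega(I_{\Mc_G})=\omega(I_{\Fc_{N_G}})$. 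Applying \cite{Domokos2016} gives $\omega(I_{\Fc_{N_G}})\le 3$, and the theorem follows.

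The step that needs the most care is reconciling the construction of $N_G$ with the exact notion of flow polytope used in \cite{Domokos2016}: depending on whether that reference fixes the flow value, allows capacities, or works with a closed network, one may have to add a return arc $t\to s$ (or auxiliary slack arcs at the vertices of $G$) so that $\Mc_G$ lands on the nose in their framework, and then re-check that the resulting coordinate change remains unimodular rather than merely rational. The two remaining ingredients --- the path decomposition of integral flows and the invariance of $\omega$ under unimodular equivalence --- are routine; the full construction and verification are carried out in Appendix~\ref{sect:app}.
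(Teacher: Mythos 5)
Your proof takes essentially the same route as the paper: the theorem is obtained by realizing $\Mc_G$ as (unimodularly equivalent to) a flow polytope and invoking the degree-$3$ bound of \cite{Domokos2016}, and the paper's Appendix~\ref{sect:app} carries this out with a single auxiliary vertex $v_0$ and slack arcs $(i,v_0)$, $(v_0,j)$ encoding the slacks $y(v)$ of the inequalities $\sum_{e\ni v}x(e)\le 1$, which is only a cosmetic variant of your source--sink network with a return arc. The issue you flag about fitting the construction to the exact definition of a flow polytope (fixed excess vector $\theta$ at every vertex) is resolved there exactly as you anticipate, and the unimodularity and dimension count go through as you describe.
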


Next, we recall a result of edge-colorings of multigraphs.
Let $G$ be a multigraph. 
For a $k$-edge-coloring $f$ of $G$ and a color $1 \leq j \leq k$, 
let $M^{(e)}(f,j)$ denote the set of all edges of color $j$. 
We say that two $k$-edge-colorings $f$ and $g$ of $G$ \textit{differ by an $m$-colored subgraph} if there is a set of colors $S$ of size $m$ such that $M^{(e)}(f,j) \neq M^{(e)} (g,j)$ for each $j \in S$, but $M^{(e)}(f,j)=M^{(e)}(g,j)$ for each $j \notin S$.
For two $k$-edge-colorings $f,g$ of $G$, we write $f \sim_r g$ if there exists a sequence $f_0,f_1,\ldots,f_s$ of $k$-edge-colorings of $G$ with $f_0=f$ and $f_s=g$ such that $f_i$ differs from $f_{i-1}$ by a $k_i$-colored subgraph with $k_i \leq r$. 
Note that $f \sim_r g$ implies $f \sim_{r+1} g$.
In \cite{Asratian1998,Asratian2009,Asratyan1991}, the following result was shown:
\begin{Theorem}[\cite{Asratian1998,Asratian2009,Asratyan1991}]
\label{thm:bip_differ}
Let $G$ be a bipartite multigraph.
Then for any $k$-edge-colorings $f$ and $g$ of $G$, one has $f \sim_3 g$.
\end{Theorem}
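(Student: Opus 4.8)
The plan is to deduce Theorem~\ref{thm:bip_differ} from Theorem~\ref{thm:bip_upper} by translating between proper edge-colorings of a bipartite multigraph and the monomials of a single fiber of the toric map of the matching polytope of the underlying simple graph. Fix a bipartite multigraph $G$, let $H$ be the simple graph underlying $G$ (again bipartite), and for each edge $e_i$ of $H$ let $a_i\ge 1$ be the number of copies of $e_i$ in $G$, so that $\ab=(a_1,\dots,a_n)\in\ZZ^n$ records the multiplicities. In a proper $k$-edge-coloring $f$ of $G$, any two parallel edges are adjacent and hence receive different colors, so each color class $M^{(e)}(f,j)$ is a matching of $H$ and $e_i$ belongs to exactly $a_i$ of the $k$ classes; thus $\sum_{j=1}^{k}\rho\!\left(M^{(e)}(f,j)\right)=\ab$. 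Conversely, any list of $k$ matchings of $H$ (repetitions and the empty matching allowed) summing to $\ab$ arises from such an $f$. Hence, assigning to $f$ the degree-$k$ monomial $u_f=\prod_{j=1}^{k}x_{M^{(e)}(f,j)}$ in the polynomial ring $K[x_M : M\in M(H)]$ (one variable per matching, as every $\rho(M)$ is a vertex of $\Mc_H$), the monomials $u_f$ are precisely the monomials $\prod_{j=1}^{k}x_{M_j}$ with $M_j\in M(H)$ and $\sum_{j=1}^{k}\rho(M_j)=\ab$ --- that is, the members of a single fiber (of total degree $k$) of the toric map defining $I_{\Mc_H}$; the variable $x_{\emptyset}$ absorbs the freedom to take $k$ larger than the chromatic index of $G$.

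Under this dictionary, $f$ differs from $g$ by an $m$-colored subgraph if and only if $u_f$ and $u_g$ can be written as $u_f=w\,v_1$ and $u_g=w\,v_2$, where $w$ is a monomial and $v_1-v_2\in I_{\Mc_H}$ is a binomial with $\gcd(v_1,v_2)=1$ and $\deg v_1=m$: the recolored colors index the variables of $v_1$ and $v_2$, and the unchanged colors index $w$. Moreover, any monomial obtained from $u_f$ by such a move again lies in the fiber, so it corresponds to a genuine proper $k$-edge-coloring of $G$ (after a consistent renaming of colors, which is itself a chain of $2$-colored moves). Therefore $f\sim_r g$ holds if and only if $u_f$ and $u_g$ are joined, inside this fiber, by a chain of moves coming from binomials of $I_{\Mc_H}$ of degree at most $r$.

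Now apply Theorem~\ref{thm:bip_upper} to the bipartite \emph{simple} graph $H$: it gives $\omega(I_{\Mc_H})\le 3$, i.e.\ $I_{\Mc_H}$ is generated by binomials of degree at most $3$. By the fundamental theorem relating binomial generators of a toric ideal to Markov bases --- if a toric ideal is generated by a set $B$ of binomials, then in every fiber of the toric map any two monomials are connected by a chain of moves coming from $B$ --- the set of all binomials of $I_{\Mc_H}$ of degree at most $3$ connects the fiber containing $u_f$ and $u_g$. Reading this back through the dictionary of the previous paragraph gives $f\sim_3 g$, which proves the theorem. (The dictionary is reversible, and applied to all bipartite multigraphs together with the converse implication of the same fundamental theorem it recovers Theorem~\ref{thm:bip_upper}, giving the equivalence announced in the introduction.)

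The one delicate part is the bookkeeping in the second paragraph: matching ``recolor $m$ classes'' with ``multiply by a degree-$m$ binomial'', keeping track of empty color classes, and using that permutations of the color names are chains of $2$-colored moves so that ordered colorings and unordered monomials may be used interchangeably. No new mathematical difficulty arises: all the depth is inherited from Theorem~\ref{thm:bip_upper}, hence ultimately from the degree-$3$ generation of the toric ideal of the Birkhoff polytope. A self-contained combinatorial proof via Kempe-chain (two-color interchange) arguments is also possible, and is the one given in \cite{Asratian1998,Asratian2009,Asratyan1991}, but it is substantially longer.
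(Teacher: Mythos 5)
Your proposal is correct and is essentially the route the paper itself takes: it is the ``only if'' direction of Theorem~\ref{thm:degree} (proved in the paper via Theorem~\ref{thm:degree_general}, with the same coloring--monomial dictionary, the same Markov-basis connectivity argument, and the same care about aligning color names) combined with Theorem~\ref{thm:bip_upper}. The bookkeeping point you flag (color permutations being chains of $2$-colored moves, so that a degree-$\le 3$ binomial move realigns to an at-most-$3$-colored difference) is handled identically in the paper's proof.
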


In the present paper, we show that Theorems \ref{thm:bip_upper} and \ref{thm:bip_differ} are equivalent.
For a simple graph $G$ on $[d]$ with $E(G)=\{e_1,e_2,\ldots,e_n\}$ and $\ab=(a_1,\ldots,a_n) \in \ZZ^n_{\geq 0}$, let $G^{(e)}_{\ab}$ be the multigraph on $[d]$ such that $G^{(e)}_{\ab}$ has $a_i$ multiedges $e_i$ for each $i$. We call $G^{(e)}_{\ab}$ the \textit{edge-replication multigraph} of $G$ on $\ab$.
Then our main result is the following:
\begin{Theorem}\label{thm:degree}
Let $G$ be a graph with $n$ edges.
Then $\omega(I_{\Mc_G}) \leq r$ if and only if for any $\ab \in \ZZ_{\geq 0}^n$ and for any $k$-edge-colorings $f$ and $g$ of $G^{(e)}_{\ab}$, one has $f \sim_r g$.\end{Theorem}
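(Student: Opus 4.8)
The plan is to reformulate $\omega(I_{\Mc_G})\le r$ as a connectivity statement about the fibers of the toric parametrization, and then to identify these fibers with edge-colorings of edge-replication multigraphs. First I would recall the classical fact that the lattice points of $\Mc_G$ are exactly the vectors $\rho(M)$, $M\in M(G)$: since $\Mc_G\subseteq[0,1]^n$ a lattice point is a $0/1$ vector, and the inequalities $\sum_{i\,:\,v\in e_i}x_i\le 1$, valid on $\Mc_G$, force its support to be a matching. Hence $I_{\Mc_G}$ is the kernel of the homomorphism $K[x_M:M\in M(G)]\to K[t_1^{\pm1},\dots,t_n^{\pm1},s]$ with $x_M\mapsto s\prod_{e_i\in M}t_i$. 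For $\ab\in\ZZ_{\ge 0}^n$ and an integer $k\ge 0$, let $\Fc(\ab,k)$ be the set of size-$k$ multisets $\{M_1,\dots,M_k\}$ of matchings of $G$ with $\sum_j\rho(M_j)=\ab$; these index precisely the degree-$k$ monomials lying in a common fiber. By the standard combinatorial criterion for generators of a toric ideal, $\omega(I_{\Mc_G})\le r$ holds if and only if for every pair $(\ab,k)$ the graph $\Gamma_{\ab,k,r}$ on $\Fc(\ab,k)$ --- with an edge between two multisets exactly when they share a common sub-multiset of size at least $k-r$, equivalently when they differ in at most $r$ members --- is connected.

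Second, I would set up a dictionary between $\Fc(\ab,k)$ and the $k$-edge-colorings of $G^{(e)}_\ab$. For such a coloring $f$, let $\Psi(f)$ be the multiset $\{M^{(e)}(f,1),\dots,M^{(e)}(f,k)\}$, each color class read as a subset of $E(G)$. Because $G$ is simple, the $a_i$ copies of $e_i$ are pairwise adjacent and receive $a_i$ distinct colors; thus each $M^{(e)}(f,j)$ is a genuine matching of $G$ and $e_i$ lies in exactly $a_i$ of these classes, so $\Psi(f)\in\Fc(\ab,k)$. The map $\Psi$ is surjective: from $\{M_1,\dots,M_k\}\in\Fc(\ab,k)$, $e_i$ lies in exactly $a_i$ of the $M_j$, and handing the $a_i$ copies of $e_i$ those colors gives a proper coloring, since an adjacency violation would force two adjacent edges into some common $M_j$. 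It is not injective: permuting the color labels, or permuting --- for a fixed $i$ --- which copy of $e_i$ gets which color, leaves $\Psi$ unchanged. The key compatibility is that $f$ and $g$ differ by an $m$-colored subgraph precisely when they agree on $k-m$ color classes, and then $\Psi(f)$ and $\Psi(g)$ share that size-$(k-m)$ sub-multiset; so an $m$-colored-subgraph move with $m\le r$ either fixes $\Psi$ or moves it along an edge of $\Gamma_{\ab,k,r}$.

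For the two implications: if $f\sim_r g$, then running along the defining chain and applying the compatibility shows $\Psi(f)$ and $\Psi(g)$ lie in one component of $\Gamma_{\ab,k,r}$, so if some $\Gamma_{\ab,k,r}$ is disconnected there exist $k$-edge-colorings that are not $\sim_r$-equivalent. Conversely, suppose every $\Gamma_{\ab,k,r}$ is connected and fix $k$-edge-colorings $f,g$ of $G^{(e)}_\ab$. Choose a path $\Psi(f)=u_0,u_1,\dots,u_s=\Psi(g)$ in $\Gamma_{\ab,k,r}$ and inductively build colorings $h_0=f,h_1,\dots,h_s$ with $\Psi(h_i)=u_i$ and with $h_{i-1}$ differing from $h_i$ by an $m_i$-colored subgraph, $m_i\le r$: the step $u_{i-1}\to u_i$ replaces a size-$\ell$ sub-multiset ($\ell\le r$) by another one with the same edge-multiplicity vector, so I would pick the $\ell$ color classes of $h_{i-1}$ realizing the first and, using only those $\ell$ colors, redistribute the copies of each $e_i$ they carry so that the new classes realize the second --- possible exactly because the two sub-multisets agree in every edge-coordinate. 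Finally $h_s$ and $g$ both lie over $u_s$, so $g$ arises from $h_s$ by relabeling colors and by permuting, for each $e_i$, which copies go to which colors; each such elementary transposition alters at most two color classes, hence is an $m$-colored-subgraph move with $m\le 2$, so $h_s\sim_2 g$ and thus $h_s\sim_r g$ for $r\ge 2$. Chaining gives $f\sim_r g$. (I would treat $r\le 1$ separately, where $\sim_0$ and $\sim_1$ both collapse to equality of colorings.)

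The step I expect to be the main obstacle is precisely this lifting together with the non-injectivity of $\Psi$: one must concretely manufacture the intermediate colorings by moving copies of edges between color classes so as to realize a prescribed change in the multiset of matchings, and then dispose of the ambiguity of $\Psi$ --- reorderings of colors and of parallel copies of an edge --- by rewriting it as a composition of $m$-colored-subgraph moves with $m\le 2$. This is also where the hypothesis $r\ge 2$ is genuinely used; the remaining ingredients, namely the lattice-point description of $\Mc_G$ and the toric-ideal criterion, are routine.
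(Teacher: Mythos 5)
Your proposal is correct and follows essentially the same route as the paper: the fiber-graph connectivity criterion you invoke is exactly the telescoping expression from the theory of binomial ideals that the paper uses, and your dictionary $\Psi$ between fiber elements and edge-colorings is the matching-polytope specialization of the paper's correspondence between monomials and vertex-colorings of vertex-replication graphs (the paper proves the more general stable-set version and specializes via $\Mc_G=\Sc_{L(G)}$). Your explicit disposal of the non-injectivity of $\Psi$ via $2$-colored-subgraph moves is, if anything, slightly more careful than the paper's ``exchanging colors and exchanging the coloring of vertices in each clique if necessary.''
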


Since any edge-replication multigraph of a simple bipartite graph is bipartite, Theorems \ref{thm:bip_upper} and \ref{thm:bip_differ} are equivalent from this theorem. In particular, this gives an alternative proof of the conjecture of \cite{Diaconis2006} mentioned above.
Moreover, we can show a similar result for perfect matching polytopes (Proposition \ref{pm gene}).

On the other hand, we give a characterization of a bipartite graph such that $\omega(I_{\Mc_G})=2$, i.e., $I_{\Mc_G}$ is generated by quadratic binomials.
In fact,
\begin{Theorem} \label{thm:bip_quad}
    Let $G$ be a bipartite graph. 
    Then the following conditions are equivalent{\em :}
    \begin{itemize}
        \item[{\rm (i)}]$\omega(I_{\Mc_G})=2${\rm ;} 
        \item[{\rm (ii)}]
        $G$ has no odd subdivision of $K_{2,3}$ as a subgraph{\rm ;} 
        \item[{\rm (iii)}]
         each block of $G$ is a bipartite graph having no odd subdivision of $K_{2,3}$ as a subgraph.
    \end{itemize}
Otherwise, one has $\omega(I_{\Mc_G})=3$.
\end{Theorem}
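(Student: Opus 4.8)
The plan is to deduce the theorem from Theorem~\ref{thm:degree} together with Theorem~\ref{thm:bip_upper}. Since the latter gives $\omega(I_{\Mc_G})\le 3$ for every bipartite $G$, the real content is the dichotomy: $\omega(I_{\Mc_G})\le 2$ precisely when $G$ has no odd subdivision of $K_{2,3}$, and $\omega(I_{\Mc_G})=3$ otherwise (condition~(i) is read as $\omega(I_{\Mc_G})\le 2$, which equals $\omega(I_{\Mc_G})=2$ away from degenerate cases such as a star, where $\Mc_G$ is a simplex and $I_{\Mc_G}=0$). I would first dispatch (ii)$\Leftrightarrow$(iii), which is routine graph theory: a subdivision of $K_{2,3}$ is $2$-connected, so any such subgraph of $G$ lies in a single block, and hence $G$ has one if and only if some block does, bipartiteness of the blocks being automatic. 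I would also record at the outset that $\omega(\cdot)$ is monotone under subgraphs: if $H\subseteq G$ then every edge-replication multigraph $H^{(e)}_{\ab}$ equals, up to isolated vertices, the multigraph $G^{(e)}_{\ab'}$ obtained by padding $\ab$ with zeros on $E(G)\setminus E(H)$, so Theorem~\ref{thm:degree} gives $\omega(I_{\Mc_H})\le\omega(I_{\Mc_G})$.

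To show that an odd subdivision of $K_{2,3}$ in $G$ forces $\omega(I_{\Mc_G})=3$, monotonicity reduces the problem to the case $G=H$ with $H$ itself an odd subdivision of $K_{2,3}$, i.e.\ two branch vertices joined by three internally disjoint paths of even lengths. Using Theorem~\ref{thm:degree}, I would produce some $\ab$ and two $k$-edge-colorings $f,g$ of $H^{(e)}_{\ab}$ with $f\sim_3 g$ but $f\not\sim_2 g$. For $H=K_{2,3}$ this is concrete with $\ab$ the all-ones vector: as $\chi'(K_{2,3})=3$ it decomposes as a sum of three matchings, a short case analysis shows there are exactly two such decompositions, they share no matching, and in each the union of any two of its three matchings is a single four-edge path, which admits an essentially unique splitting into two matchings; hence no quadratic move changes either decomposition, so the two are $\sim_3$-related (by Theorem~\ref{thm:bip_differ}, or directly by recoloring all three colors at once) but not $\sim_2$-related, giving $\omega(I_{\Mc_{K_{2,3}}})=3$. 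For a general odd subdivision $H$ I anticipate a separate lemma that passing to an odd subdivision cannot decrease $\omega$: one should lift a witnessing pair of colorings of $(K_{2,3})^{(e)}_{\ab}$ to colorings of $H^{(e)}_{\bb}$, where $\bb$ gives every edge of the path replacing an edge $e$ the multiplicity $a_e$, and argue that a hypothetical $\sim_2$-path between the lifts projects to a $\sim_2$-path between the originals.

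The converse --- that no odd subdivision of $K_{2,3}$ forces $\omega(I_{\Mc_G})\le 2$ --- is the heart of the theorem; by Theorem~\ref{thm:degree} it is equivalent to the combinatorial statement that, for such $G$, any two $k$-edge-colorings of any $G^{(e)}_{\ab}$ are $\sim_2$-related. I would proceed in three stages. First, reduce to $2$-connected $G$ via the lemma ``$\omega(I_{\Mc_G})\le 2$ if and only if $\omega(I_{\Mc_B})\le 2$ for every block $B$ of $G$'': the forcing direction is subgraph-monotonicity, and the other direction says that gluing blocks at cut vertices creates no minimal generator of degree $>2$, which by Theorem~\ref{thm:degree} amounts to performing the required $\sim_2$-moves one block at a time (using that the parallel copies of an edge can anyway be permuted by $\sim_2$-moves, which helps align the per-block colorings). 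Second, determine the structure of $2$-connected bipartite graphs with no odd subdivision of $K_{2,3}$, equivalently no theta subgraph whose three paths all have even length; I expect these to amount essentially to even cycles and ``books'' --- two vertices joined by several internally disjoint paths all of odd length (any theta subgraph of such a book uses three of those paths, so all of its paths are odd). Third, for each such block verify the edge-coloring statement directly, by induction on the number and lengths of the parallel paths together with a Kempe-chain analysis of two-colored unions. A possible alternative to ``block reduction plus classification'' is to prove the combinatorial statement in one stroke, as a sharpening of Theorem~\ref{thm:bip_differ}: for a bipartite multigraph whose underlying simple graph has no odd subdivision of $K_{2,3}$, any two $k$-edge-colorings are $\sim_2$-related.

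The main obstacle is the third stage --- the Kempe-chain analysis (or the one-stroke sharpening of Theorem~\ref{thm:bip_differ}) showing that two colors always suffice on the structured blocks --- where I expect most of the work to be concentrated; a secondary difficulty is upgrading the lower bound $\omega(I_{\Mc_H})=3$ from $K_{2,3}$ to all of its odd subdivisions.
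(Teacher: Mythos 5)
Your reading of the statement and your reductions for the easy parts are sound: (ii)$\Leftrightarrow$(iii) via $2$-connectivity of theta subgraphs, subgraph-monotonicity of $\omega(I_{\Mc_{\bullet}})$ (which is Proposition~\ref{subgraph prop} in the paper), and the lower bound $\omega(I_{\Mc_{K_{2,3}}})=3$ via the two Kempe-rigid proper $3$-edge-colorings of $K_{2,3}$ are all correct in outline. The ``odd subdivision cannot decrease $\omega$'' lemma you anticipate is indeed needed and is proved in the paper, but by a polytopal argument (exhibiting a face of $\Mc_{G'}$ isomorphic to $\Mc_G$) rather than by lifting and projecting colorings; your projection argument would need care, since a Kempe component in the subdivided multigraph need not project to a single Kempe move.

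The genuine gap is the implication (ii)$\Rightarrow$(i), which you correctly identify as the heart of the theorem but do not prove. Your three-stage program is deferred precisely at its crucial third stage (the Kempe-chain analysis on the structured blocks, where you yourself say most of the work is concentrated), and the second stage --- that $2$-connected bipartite graphs with no even theta are ``essentially even cycles and books'' --- is asserted as an expectation, not proved; it requires an ear-decomposition argument of its own. The block reduction in the first stage also has a subtlety: $f|_B\sim_2 g|_B$ on each block $B$ only recovers $g|_B$ up to a permutation of colors, and these permutations must be made compatible at the cut vertices; the paper sidesteps this by working at the level of toric ideals (Lemma~\ref{block_lemma}, via clique sums of line graphs and Proposition~\ref{stable_sum}). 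The paper's actual route to (ii)$\Rightarrow$(i) is quite different and essentially citation-based: it passes to $L(G)$, checks that $L(G)$ is dart-free with no odd holes and no antiholes of length $\ge 7$, identifies ``no odd subdivision of $K_{2,3}$ in $G$'' with ``no odd prism in $L(G)$'', and then invokes \cite[Theorem 1.5(a)]{OhsugiTsuchiya2023Kempe} (Proposition~\ref{thm:app}) to get $\omega(I_{\Sc_{L(G)}})=2$; the converse (i)$\Rightarrow$(iii) is likewise quoted from \cite[Theorem 1.7]{OhsugiShibataTsuchiya2023}. Without either carrying out your Kempe analysis in full or importing such a result, the equivalence is not established.
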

Finally, we discuss the maximal degree of minimal generators of $I_{\mathcal{M}_G}$ for a general graph $G$.
There exists a non-bipartite graph $G$ with $\omega(I_{\mathcal{M}_G})=4$ (see Example \ref{exam:deg4}).
Hence we cannot extend Theorem \ref{thm:bip_differ} to the case of arbitrary graphs.
However, motivated by a conjecture in \cite{Asratian1998}, we give the following conjectures:
\begin{Conjecture}
\label{conj:non-perfect}
    Let $G$ be a (non-bipartite) graph. Then one has $\omega(I_{\Mc_G}) \leq 4$.
\end{Conjecture}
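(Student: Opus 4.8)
We would approach Conjecture~\ref{conj:non-perfect} through the combinatorial dictionary furnished by Theorem~\ref{thm:degree}. Every loopless multigraph $H$ on $[d]$ is the edge-replication multigraph $G^{(e)}_{\ab}$ of its underlying simple graph $G$ (with $a_i$ the multiplicity of the $i$th edge), so Theorem~\ref{thm:degree}, applied to all $G$ at once, says that Conjecture~\ref{conj:non-perfect} is \emph{equivalent} to the statement that for every loopless multigraph $H$ and every pair of $k$-edge-colorings $f,g$ of $H$ one has $f\sim_4 g$. This is precisely the non-bipartite analogue of Theorem~\ref{thm:bip_differ} and coincides with the conjecture of \cite{Asratian1998} that motivates us. Since the bipartite case is already Theorem~\ref{thm:bip_differ}, the whole content is to control the effect of odd cycles.

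The plan is to imitate the proof of Theorem~\ref{thm:bip_differ} while quantifying the ``parity defect.'' The elementary moves are the $2$-colored Kempe swaps: for colors $i\neq j$ the subgraph $M^{(e)}(f,i)\cup M^{(e)}(f,j)$ has maximum degree $2$, hence decomposes into paths and cycles whose edges alternate between $i$ and $j$, and recoloring a single path or a single even cycle by exchanging $i$ and $j$ is a legal $2$-colored move. When $H$ is bipartite all such cycles are even, which is morally why $3$ colors suffice there; in general the odd alternating cycles are the only obstruction to a free $2$-colored swap, and a single odd cycle can be undone by borrowing one extra color, i.e.\ by a $4$-colored move. Concretely one would first use $3$-colored moves---following the bipartite argument on a bipartite skeleton of $H$---to transform $f$ into a coloring $f'$ that agrees with $g$ away from a family of odd substructures, and then resolve each odd substructure by a $4$-colored move that reroutes two colors around one odd cycle while temporarily parking the displaced edge in a fourth color. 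Equivalently, one could argue by contradiction with a counterexample minimizing an appropriate complexity measure of the pair $(f,g)$ (such as the number of edges on which $f$ and $g$ disagree, or the number of colors in which they differ) and try to show that such a minimal pair forces $H$ to be bipartite, contradicting Theorem~\ref{thm:bip_differ}.

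The main obstacle is exactly that step one cannot in general be carried out with the odd substructures vertex-disjoint: odd alternating cycles coming from different pairs of colors may share vertices and edges, and repairing one of them typically destroys the progress made on another or creates new odd components, so that the number of colors touched per move can blow past $4$. Isolating and bounding this interaction---and in the minimal-counterexample formulation, extracting from a most-economical bad pair a single odd cycle on which a controlled $\le 4$-colored repair strictly decreases the complexity measure---is the crux, and is what remains unresolved; Example~\ref{exam:deg4} shows that $4$ is best possible, so no argument can improve it. A minor technical point is that parallel edges of the same color are forbidden in a multigraph, so all recoloring moves must respect edge multiplicities; since our moves are local exchanges along alternating paths and cycles this is automatic, but it has to be checked when the fourth ``parking'' color is introduced in the odd-cycle step.
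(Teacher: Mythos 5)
The statement you are asked to prove is stated in the paper as a \emph{conjecture}, and the paper offers no proof of it --- only supporting evidence (verification for graphs on at most $7$ vertices and for wheels up to $W_{10}$, plus the examples showing the bound $4$ is attained). Your proposal is likewise not a proof: you correctly translate the conjecture, via Theorem~\ref{thm:degree}, into the equivalent statement that any two $k$-edge-colorings of any loopless multigraph satisfy $f \sim_4 g$ (which is indeed the non-bipartite analogue of Theorem~\ref{thm:bip_differ} and the form in which the question is posed in \cite{Asratian1998}), and you correctly identify that Example~\ref{exam:deg4} shows $4$ cannot be improved. But the entire content of the conjecture --- producing the sequence of $\le 4$-colored moves, or running the minimal-counterexample argument to completion --- is exactly the step you leave open, and you say so yourself. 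A reformulation plus an honest description of the obstacle is not a proof, so there is a genuine and total gap here.

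One conceptual point in your sketch is also off. For proper \emph{edge}-colorings, the union of two color classes is the union of two matchings, whose components are paths and \emph{even} cycles only: an alternating cycle between two matchings必 has even length, so ``odd alternating cycles'' between a pair of colors never occur and cannot be the obstruction you describe. This is visible in the paper's Example~\ref{exam:deg4}: the obstruction to $f \sim_3 g$ there is that every $2$- or $3$-color subgraph admits a \emph{unique} proper coloring up to permutation (so no nontrivial $\le 3$-colored move exists at all), not that some two-color component is an odd cycle. Any successful attack on the conjecture would have to engage with this genuinely $3$- and $4$-color rigidity phenomenon rather than with parity of two-color components. If you want a tractable reduction, note the paper's observation that by Proposition~\ref{subgraph prop} it suffices to prove the bound for complete graphs $K_d$.
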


\begin{Conjecture}\label{conj:non-perfect-matching}
       Let $G$ be a (non-bipartite) graph. Then one has $\omega(I_{\Pc_G}) \leq 4$.
\end{Conjecture}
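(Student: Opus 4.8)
The statement is a conjecture, so what follows is a strategy rather than a finished argument. The plan is to reduce the algebraic assertion to a purely combinatorial one about $1$-factorizations, exactly in the spirit of Theorem \ref{thm:degree}, and then to attack that combinatorial statement through the structure theory of perfect matchings.

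First I would invoke the perfect-matching analogue of Theorem \ref{thm:degree}, namely Proposition \ref{pm gene}: a degree-$k$ binomial of $I_{\Pc_G}$ is a pair of multisets $\{M_1,\dots,M_k\}$ and $\{N_1,\dots,N_k\}$ of perfect matchings of $G$ with $\sum_i \rho(M_i)=\sum_j \rho(N_j)=:\ab$, that is, two decompositions of the $k$-regular edge-replication multigraph $G^{(e)}_{\ab}$ into perfect matchings (two $1$-factorizations). Under this dictionary the relation $f\sim_r g$ says precisely that the corresponding monomials differ modulo binomials of degree at most $r$, and a single $m$-colored swap is the application of one such degree-$m$ binomial. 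Hence the conjecture $\omega(I_{\Pc_G})\le 4$ becomes: \emph{for every $\ab\in\ZZ_{\ge 0}^n$, any two $1$-factorizations of $G^{(e)}_{\ab}$ satisfy $f\sim_4 g$.} I stress that this does not follow from Conjecture \ref{conj:non-perfect}, because $\Pc_G$ is only a face of $\Mc_G$, so $I_{\Pc_G}=I_{\Mc_G}\cap k[x_M:M\text{ perfect}]$ is the intersection of the larger toric ideal with a coordinate subring; such an elimination can raise the degrees of minimal generators, and the perfect-matching case therefore needs its own proof.

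The starting point for the combinatorial statement is that the union, with multiplicity, of two perfect matchings is a disjoint collection of doubled edges and \emph{even} alternating cycles, in an arbitrary graph. Thus for any two colors the pair $(M_i,M_j)$ can be recombined into $(M_i',M_j')$ by flipping any subset of these even cycles; this is a $2$-colored swap. The difficulty is global: to turn the whole $1$-factorization $f$ into $g$ one must simultaneously re-sort all $k$ color classes, and the even cycles attached to different pairs interlock. I would organise this through the tight-cut (brick--brace) decomposition of matching-covered graphs of Edmonds--Lov\'asz--Pulleyblank, under which the perfect-matching polytope, and hence $I_{\Pc_G}$, decomposes along the tight cuts. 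The braces are bipartite, so any two of their $1$-factorizations already satisfy $f\sim_3 g$ by Theorem \ref{thm:bip_differ}; the whole content of the conjecture is thereby pushed onto the bricks, whose fundamental members are $K_4$ and the Petersen graph.

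The main obstacle, just as in the non-bipartite full-matching case that produces $\omega(I_{\Mc_G})=4$ in Example \ref{exam:deg4}, is exactly the odd structure hidden inside the bricks: when one tries to coordinate the even-cycle flips across all $k$ colors, odd cycles in the multi-colored subgraphs force one extra color into the swap, and the crux is to show that a \emph{fourth} color always suffices to repair this and never more. Concretely, the hardest step I anticipate is proving that a swap performed inside a brick lifts through a tight cut to a swap of the whole graph using at most four colors, i.e.\ that the decomposition is compatible with $\sim_4$. A natural line of attack is to first settle the two fundamental bricks $K_4$ and the Petersen graph by a direct, even computational, analysis of the $1$-factorizations of their edge-replications, and then to run an induction on the tight-cut decomposition, bookkeeping the colors consumed at each cut so the count never exceeds four. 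If the induction leaks a fifth color at some cut, that would pinpoint either a missing brick in the analysis or a genuine counterexample, making this step simultaneously the main risk and the main diagnostic.
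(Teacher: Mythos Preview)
The statement is a conjecture, and the paper does not prove it; what the paper does instead is show (Proposition~\ref{prop:conjs}, via Proposition~\ref{p to m}) that it is \emph{equivalent} to Conjecture~\ref{conj:non-perfect}. So there is no proof in the paper to compare your strategy against, only this equivalence.

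Your proposal contains a concrete mathematical error in exactly the place where it touches that equivalence. You write that Conjecture~\ref{conj:non-perfect-matching} ``does not follow from Conjecture~\ref{conj:non-perfect}, because $\Pc_G$ is only a face of $\Mc_G$ \dots\ such an elimination can raise the degrees of minimal generators.'' This is false for toric ideals of faces: Proposition~\ref{faceprop} says precisely that if $F$ is a face of $\Pc$ and $\Gc$ generates $I_\Pc$, then $\Gc\cap R[F]$ generates $I_F$, whence $\omega(I_F)\le\omega(I_\Pc)$. The paper records the consequence $\omega(I_{\Pc_G})\le\omega(I_{\Mc_G})$ as Proposition~\ref{p to m}(a). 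So the implication you declare to fail is the \emph{trivial} direction; it is the converse, Conjecture~\ref{conj:non-perfect-matching} $\Rightarrow$ Conjecture~\ref{conj:non-perfect}, that requires an argument, and the paper supplies it via the doubling construction in Proposition~\ref{p to m}(b).

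Your brick--brace outline may be a reasonable way to attack the open problem itself, but it is orthogonal to what the paper actually establishes about this conjecture, and it is built on the mistaken premise above. A further caution: the tight-cut decomposition produces infinitely many non-isomorphic bricks, not just $K_4$ and the Petersen graph; those two are distinguished in Lov\'asz's theory of the matching lattice, but an induction along tight cuts would still have to handle arbitrary bricks. Your sketch does not indicate how that would be done.
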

In general, we have $\omega(I_{\Pc_G}) \le \omega(I_{\Mc_G})$
(Proposition \ref{p to m} (a)).
On the other hand, there exists a graph $G$ such that $\omega(I_{\Mc_G}) \neq \omega(I_{\Pc_G})$. However, we can show that these conjectures are equivalent (Proposition \ref{prop:conjs}).

The present paper is organized as follows:
In Section \ref{sect:pre}, we introduce the toric ideal of a lattice polytope and define the stable set polytope associated with a graph. Note that $\Mc_G$ is the stable set polytope of the line graph of $G$. Section \ref{sect:stable} proves Theorem \ref{thm:degree} by providing an upper bound on the maximal degree of minimal generators of the toric ideal of a stable set polytope using vertex-colorings. 
Section \ref{sect:method} gives an algebraic method to determine if $f \sim_r g$ for $k$-vertex-colorings $f$ and $g$ of a graph $G$.
In Section \ref{sect:matching},  we extend Theorem \ref{thm:bip_upper} to the case of line perfect graphs and prove Theorem \ref{thm:bip_quad} by characterizing line perfect graphs such that the toric ideals of their matching polytopes are generated by quadratic binomials.
Finally, Section \ref{sect:non-perfect} considers examples of  $I_{\Mc_G}$ for general graphs
and Conjecture \ref{conj:non-perfect} for graphs with a small number of vertices, and discusses perfect matching polytopes.

\section{Preliminaries}
\label{sect:pre}
In this section, we introduce the toric ideal of a lattice polytope and define the stable set polytope associated with a graph. 
\subsection{Toric rings and toric ideals}
Let $\Pc \subset \RR^d_{\geq 0}$ be a lattice polytope with $\Pc \cap \ZZ^d_{\geq 0}=\{\ab_1,\ldots,\ab_n\}$ and
let $\KK[\tb,s]:=\KK[t_1,\ldots,t_d,s]$ be the polynomial ring in $d+1$ variables  over a field $\KK$.
Given a nonnegative integer vector 
$\ab=(a_1,\ldots,a_d) \in  \ZZ_{\geq 0}^d$, we write 
$\tb^{\ab}:=t_1^{a_1} t_2^{a_2}\cdots t_d^{a_d} \in \KK[\tb,s]$.
The \textit{toric ring} of $\Pc$ is 
\[
\KK[\Pc]:=\KK[\tb^{\ab_1} s,\ldots, \tb^{\ab_n} s] \subset \KK[\tb,s]. 
\]
We regard $\KK[\Pc]$ as a homogeneous algebra by setting each $\deg (\tb^{\ab_i} s)=1$.
Let $R[\Pc]=\KK[x_{1},\ldots,x_{n}]$ denote the polynomial ring in $n$ variables over  $\KK$ with each $\deg(x_i)=1$.
The \textit{toric ideal} $I_{\Pc}$ of $\Pc$ is the kernel of the surjective homomorphism 
$\pi:R[\Pc] \to \KK[\Pc]$ defined by $\pi(x_{i})=\tb^{\ab_i}s$ for $1 \leq i \leq n$.
Note that $I_{\Pc}$ is a prime ideal generated by homogeneous binomials.
The toric ring $\KK[\Pc]$ is called \textit{quadratic}
if $I_\Pc$ is generated by quadratic binomials.
We say that ``$I_\Pc$ is generated by quadratic binomials'' even if $I_\Pc =\{0\}$. In particular, $\omega(I_\Pc) \geq 2 $ and $\omega(\{0\})=2$.
The following is known.

\begin{Proposition}[{\cite{OHH}, \cite[Theorem 1.3]{OhsugiGeom}}]\label{faceprop}
    Let $F$ be a face of a lattice polytope $\Pc$.
    If ${\mathcal G}$ is a set of generators of $I_\Pc$,
    then ${\mathcal G} \cap R[F]$ is a set of generators of $I_F$.
    In particular, we have $\omega(I_F) \le \omega(I_\Pc)$.
\end{Proposition}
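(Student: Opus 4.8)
The plan is to equip $R[\Pc]$ with a $\ZZ$-grading adapted to the face $F$, under which $I_\Pc$ is homogeneous, $R[F]$ is precisely the degree-zero subring, and $I_F$ is precisely the degree-zero part of $I_\Pc$; the assertion then follows by extracting degree-zero components, and I will write $(\,\cdot\,)_{[j]}$ for the degree-$j$ piece. Concretely, since $F$ is a face of the lattice polytope $\Pc$, I would fix $\varphi\in(\ZZ^d)^{*}$ and $c\in\ZZ$ (available because $\Pc$ is a lattice polytope) with $\varphi(\ab)\leq c$ for all $\ab\in\Pc$ and $F=\Pc\cap\{\ab:\varphi(\ab)=c\}$; then $c=\max_i\varphi(\ab_i)$, and $\varphi(\ab_i)=c$ exactly when $\ab_i\in F$. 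Put $w_i:=c-\varphi(\ab_i)\in\ZZ_{\geq 0}$, so $w_i=0$ if and only if $\ab_i\in F$, and grade $R[\Pc]$ by $\deg'(x_i):=w_i$. Any binomial $\xb^{\ub}-\xb^{\vb}\in I_\Pc$ satisfies $\pi(\xb^{\ub})=\pi(\xb^{\vb})$, that is, $\sum_i u_i\ab_i=\sum_i v_i\ab_i$ and $\sum_i u_i=\sum_i v_i$, whence $\sum_i u_i w_i=\sum_i v_i w_i$; so the binomials of $I_\Pc$ are $\deg'$-homogeneous, and as $I_\Pc$ is generated by binomials it is $\deg'$-graded. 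Moreover $\deg'(\xb^{\ub})=0$ precisely when $u_i=0$ for every $i$ with $w_i>0$, i.e.\ precisely when $\xb^{\ub}\in R[F]$; hence $R[F]=R[\Pc]_{[0]}$. Since $\pi$ restricts on $R[F]$ to the defining surjection $R[F]\to\KK[F]=\KK[\tb^{\ab_i}s:\ab_i\in F]$ of $I_F$, we obtain $I_F=\ker(\pi|_{R[F]})=I_\Pc\cap R[F]=(I_\Pc)_{[0]}$.

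Next I would take $\Gc$ to be a set of binomial generators of $I_\Pc$, as is standard for toric ideals. Since $\Gc\cap R[F]\subseteq I_F$ is clear, it suffices to express an arbitrary $f\in I_F$ as an $R[F]$-linear combination of elements of $\Gc\cap R[F]$. Writing $f=\sum_{g\in\Gc}h_g\,g$ with $h_g\in R[\Pc]$, I take the $\deg'$-degree-zero component of both sides: since $f$ has $\deg'$-degree $0$ and each $g$ is $\deg'$-homogeneous of degree $\deg'(g)\geq 0$, the term coming from $g$ equals $(h_g)_{[-\deg'(g)]}\,g$, which vanishes whenever $\deg'(g)>0$. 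Hence $f=\sum_{g\in\Gc,\,\deg'(g)=0}(h_g)_{[0]}\,g$. For each $g$ surviving in this sum, both of its monomials have $\deg'=0$, so $g\in R[F]$ and therefore $g\in\Gc\cap R[F]$; and $(h_g)_{[0]}$, being a $\KK$-combination of degree-zero monomials, lies in $R[F]$. Thus $f$ belongs to the ideal of $R[F]$ generated by $\Gc\cap R[F]$, which proves that $\Gc\cap R[F]$ generates $I_F$.

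Finally, for the inequality $\omega(I_F)\leq\omega(I_\Pc)$ I would apply the above to a minimal set $\Gc$ of binomial generators of $I_\Pc$, every member of which has (standard) degree at most $\omega(I_\Pc)$; the resulting generating set $\Gc\cap R[F]$ of $I_F$ inherits this degree bound. The only step I expect to demand real care is the setup of the grading — arranging that $R[F]$ coincides exactly with the degree-zero subring while $I_\Pc$ stays homogeneous; granting that, the rest is a short, purely formal extraction of degree-zero components, with no genuinely difficult point.
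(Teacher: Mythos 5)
Your argument is correct. The paper itself gives no proof of this proposition -- it is quoted from \cite{OHH} and \cite[Theorem 1.3]{OhsugiGeom} -- so there is nothing to compare against line by line, but your grading argument is essentially the standard one: introduce the nonnegative weight $w_i = c - \varphi(\ab_i)$ coming from a supporting hyperplane of $F$ with integral data, observe that every binomial of $I_\Pc$ is homogeneous for this weight (using both $\sum u_i \ab_i = \sum v_i \ab_i$ and $\sum u_i = \sum v_i$), identify $R[F]$ with the weight-zero subring and $I_F$ with $I_\Pc \cap R[F] = (I_\Pc)_{[0]}$, and extract weight-zero components. All of these steps check out, including the observation that a weight-homogeneous binomial of weight $0$ has both monomials of weight $0$ and hence lies in $R[F]$.

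One point worth making explicit, which you half-address: as literally stated the proposition fails for an arbitrary generating set $\Gc$ (e.g.\ replace two binomial generators $g_1 \in R[F]$, $g_2 \notin R[F]$ by $g_1 \pm g_2$, so that $\Gc \cap R[F] = \emptyset$ while $I_F \ne 0$). The intended hypothesis, as in the cited sources, is that $\Gc$ consists of binomials; you correctly impose this, and it costs nothing for the conclusion $\omega(I_F) \le \omega(I_\Pc)$, since a minimal homogeneous generating set of the binomial ideal $I_\Pc$ can always be chosen among binomials and any generating set of $I_F$ in degrees $\le D$ forces $\omega(I_F) \le D$.
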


\subsection{Stable set polytopes and stable set ideals}
Let $G$ be a simple graph on $[d]$. A subset $S \subset [d]$ is called a {\em stable set} (or an {\em independent set}) of $G$
if $\{i,j\} \notin E(G)$ for all $i,j \in S$ with $i \neq j$.
In particular, the empty set $\emptyset$ and any singleton $\{i\}$ with $i \in [d]$
are stable.
Let $S(G)$ denote
the set of all stable sets of $G$.
Then the \textit{stable set polytope} $\Sc_G$ of $G$ is defined as
\[
\Sc_G=\conv\{\rho(S) : S \in S(G) \}.
\]
The matching polytope $\Mc_G$ is a stable set polytope of some graph.
Indeed, the \textit{line graph} $L(G)$ of $G$ is a simple graph whose vertex set is $E(G)$ and whose edge set is 
\[
\{ \{e, e'\} \subset \ E(G) : e \neq e' \mbox{ and } e \cap e' \neq \emptyset\}.
\]
Then one has $\Mc_G=\Sc_{L(G)}$ by changing coordinates.

For a graph $G$, the toric ideal $I_{\Sc_G}$ is called the \textit{stable set ideal} of $G$.
We can describe a system of generators of $I_{\Sc_G}$ in terms of $k$-vertex-colorings.
Given a graph $G$ on the vertex set $[d]$, and 
$\ab = (a_1,\ldots,a_d) \in \ZZ_{\ge 0}^d$,
let $G_\ab$ be the graph obtained from $G$ by replacing each vertex $i \in [d]$ 
with a complete graph $G^{(i)}$ of $a_i$ vertices (if $a_i =0$, then just delete the vertex $i$),
and joining all vertices $x \in G^{(i)}$ and $y \in G^{(j)}$ such that $\{i,j\}$ is an edge of $G$.
In particular, if $\ab =(1,\ldots,1)$, then $G_\ab = G$.
If $\ab = {\bf 0}$, then $G_\ab$ is the null graph (a graph without vertices). 
In addition, if $\ab$ is a $(0,1)$-vector, namely, $\ab \in \{0,1\}^d$, then $G_\ab$ is an induced subgraph of $G$.
We call $G_\ab$ a 
\textit{vertex-replication graph} of $G$.
Given a $k$-vertex-coloring $f$ of $G_{\ab}$ with $\ab \in \ZZ^d_{\geq 0}$,
we associate $f$ with a monomial
\[
\xb_f :=x_{S_{i_1}} \cdots x_{S_{i_k}} \in R[\Sc_G], 
\]
where
$S_{i_\ell} =  \{j \in [d] : G^{(j)} \cap f^{-1}(\ell) \ne \emptyset\}$
for $\ell=1,2,\ldots, k$.
Conversely, let $m=x_{S_{i_1}} \cdots x_{S_{i_k}} \in R[\Sc_G]$ be a monomial of degree $k$.
Then, for $\ab=(a_1,\ldots,a_n)$ with $a_p = |\{ \ell : p \in S_{i_\ell}\}|$,
there exists a $k$-vertex-coloring $f$ of $G_{\ab}$ such that $\xb_f = m$ (see \cite[Lemma 3.2]{OhsugiTsuchiya2023Kempe}).
Note that, for $k$-vertex-colorings $f$ and $g$ of an induced subgraph of $G$, 
$\xb_f = \xb_g$ if and only if $g$ is obtained from $f$ by permuting colors.
In this paper, we identify $f$ and $g$ if $g$ is obtained from $f$ by permuting colors.
%
%
Then we can describe a system of generators of $I_{\Sc_G}$ as follows:

\begin{Proposition}[{\cite[Theorem 3.3]{OhsugiTsuchiya2023Kempe}}]
\label{prop:gen}
Let $G$ be a simple graph on $[d]$. Then one has 
\[I_{\Sc_G} = \langle \xb_f- \xb_g : \mbox{$f$ and $g$ are $k$-vertex-colorings of $G_\ab$ with $\ab \in \ZZ^d_{\geq 0}$ and $k \geq \chi(G_{\ab})$}  \rangle,\]
where $\chi(G_{\ab})$ is the chromatic number of $G_{\ab}$.
\end{Proposition}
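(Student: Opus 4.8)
The plan is to combine the standard presentation of a toric ideal by its exchange binomials with the dictionary, recalled just before the statement, between monomials of $R[\Sc_G]$ and vertex-colorings of vertex-replication graphs.

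First I would pin down the generators of $R[\Sc_G]$ and the shape of the binomials in $I_{\Sc_G}$. Since $\Sc_G \subseteq [0,1]^d$ and, for every edge $\{i,j\} \in E(G)$, each vertex $\rho(S)$ of $\Sc_G$ satisfies $\rho(S)_i + \rho(S)_j \le 1$ (because $S$ is stable), a $(0,1)$-vector $\rho(T)$ lies in $\Sc_G$ if and only if $T$ is stable; hence the lattice points of $\Sc_G$ are exactly the $\rho(S)$ with $S \in S(G)$, $R[\Sc_G] = \KK[x_S : S \in S(G)]$, and $\pi(x_S) = \tb^{\rho(S)}s$. By the standard fact that a toric ideal is generated by the binomials recording all linear relations among the lattice points, $I_{\Sc_G}$ is generated by the binomials $\prod_{\ell=1}^{k} x_{S_\ell} - \prod_{\ell=1}^{k} x_{T_\ell}$ taken over all finite multisets $\{S_\ell\}$, $\{T_\ell\}$ of stable sets with $\sum_\ell \rho(S_\ell) = \sum_\ell \rho(T_\ell)$; equality of the $s$-exponents forces the two products to have the same number $k$ of factors.

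Next I would translate one such binomial into colorings. Put $\ab := \sum_\ell \rho(S_\ell) = \sum_\ell \rho(T_\ell) \in \ZZ^d_{\ge 0}$, so that $a_p = |\{\ell : p \in S_\ell\}| = |\{\ell : p \in T_\ell\}|$ for each vertex $p$. Applying the construction recalled from \cite[Lemma 3.2]{OhsugiTsuchiya2023Kempe}, color the $a_p$ vertices of the clique $G^{(p)} \subseteq G_\ab$ bijectively by the $a_p$ colors $\ell$ with $p \in S_\ell$; this is a proper $k$-coloring $f$ of $G_\ab$ (colors within $G^{(p)}$ are distinct by construction, and across an edge $\{p,q\} \in E(G)$ no color is shared since $p,q$ never lie in a common stable set $S_\ell$), and directly from the definition of $\xb_f$ one gets $\xb_f = \prod_\ell x_{S_\ell}$, where a color $\ell$ with $S_\ell = \emptyset$ just contributes the factor $x_\emptyset$. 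The multiset $\{T_\ell\}$ yields, for the \emph{same} graph $G_\ab$, a $k$-coloring $g$ with $\xb_g = \prod_\ell x_{T_\ell}$. Hence every generating binomial of $I_{\Sc_G}$ equals $\xb_f - \xb_g$ for $k$-colorings $f,g$ of a common $G_\ab$, with $k \ge \chi(G_\ab)$ automatically, since $G_\ab$ is $k$-colorable. Conversely, any $k$-coloring $h$ of $G_\ab$ has $\pi(\xb_h) = \tb^{\ab}s^{k}$, which depends only on $G_\ab$, so $\xb_f - \xb_g \in I_{\Sc_G}$ whenever $f,g$ are $k$-colorings of the same $G_\ab$; combining the two inclusions proves the stated equality.

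The argument is short once the monomial--coloring dictionary is in hand, so the work is essentially bookkeeping: checking that the lattice points of $\Sc_G$ are exactly the stable sets, and that repeated stable sets within a monomial, empty color classes (the variable $x_\emptyset$), permutations of colors, and the degenerate case $\ab = \mathbf{0}$ (the null graph, $k=0$) cause no ambiguity. The one substantive ingredient --- the equivalence between degree-$k$ monomials of $R[\Sc_G]$ with vertex-multiplicity vector $\ab$ and $k$-vertex-colorings of $G_\ab$ --- is precisely \cite[Lemma 3.2]{OhsugiTsuchiya2023Kempe}, which I would cite rather than reprove, so there is no real further obstacle.
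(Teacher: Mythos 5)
Your proof is correct. Note that the paper itself offers no proof of this proposition---it is imported verbatim from \cite[Theorem 3.3]{OhsugiTsuchiya2023Kempe}---and your argument is exactly the intended one: the standard fact that a toric ideal is spanned by the binomials $\prod_\ell x_{S_\ell}-\prod_\ell x_{T_\ell}$ with $\sum_\ell\rho(S_\ell)=\sum_\ell\rho(T_\ell)$, combined with the monomial--coloring dictionary of \cite[Lemma 3.2]{OhsugiTsuchiya2023Kempe} that the paper recalls immediately before the statement. Your bookkeeping (lattice points of $\Sc_G$ are precisely the stable sets, properness of the constructed coloring across edges of $G_\ab$, empty color classes contributing $x_\emptyset$, and $k\ge\chi(G_\ab)$ being automatic) is all sound.
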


If $G'$ is an induced subgraph of a graph $G$, then $\Sc_{G'}$ is a 
face of $\Sc_G$.
From Proposition \ref{faceprop}, we have the following.

\begin{Proposition}
Let $G'$ be an induced subgraph of $G$.
Then we have $\omega (I_{\Sc_{G'}}) \le \omega (I_{\Sc_{G}})$.
\end{Proposition}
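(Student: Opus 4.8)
The plan is to realize $\Sc_{G'}$ as a face of $\Sc_G$ and then invoke Proposition~\ref{faceprop}. Write $G' = G[W]$ for the induced subgraph on a vertex subset $W \subseteq [d]$, so that the stable sets of $G'$ are precisely those stable sets $S$ of $G$ with $S \subseteq W$. First I would consider the linear functional $\phi(\xb) = \sum_{i \in [d]\setminus W} x_i$. Since every vertex of $\Sc_G$ is a $(0,1)$-vector, $\phi$ is nonnegative on $\Sc_G$, and $\phi(\rho(S)) = 0$ if and only if $S \subseteq W$. Hence $F := \{\xb \in \Sc_G : \phi(\xb) = 0\}$ is a face of $\Sc_G$, and it equals the convex hull of the vectors $\rho(S)$ with $S$ a stable set of $G'$, regarded inside $\RR^d$.

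Next I would observe that on $F$ all coordinates indexed by $[d] \setminus W$ vanish identically; deleting those coordinates produces a lattice polytope in $\RR^{W}$ that is literally $\Sc_{G'}$. Deleting coordinates that are identically zero on a lattice polytope does not change its toric ideal, since the toric ideal depends only on the affine lattice generated by the lattice points together with the labeling of them by variables. Thus, under the induced bijection between the variables of $R[F]$ and those of $R[\Sc_{G'}]$, one has $I_F = I_{\Sc_{G'}}$, and in particular $\omega(I_{\Sc_{G'}}) = \omega(I_F)$.

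Finally, applying Proposition~\ref{faceprop} to the face $F$ of $\Sc_G$ gives $\omega(I_F) \le \omega(I_{\Sc_G})$, and combining this with the previous paragraph yields $\omega(I_{\Sc_{G'}}) \le \omega(I_{\Sc_G})$, as desired.

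There is essentially no serious obstacle here: the only point requiring care is the bookkeeping in the second paragraph, namely verifying that the face $F$, after discarding the coordinates that are forced to be zero on it, coincides exactly with $\Sc_{G'}$, and that this identification is compatible with the presentations of the two toric ideals so that Proposition~\ref{faceprop} applies verbatim. This is routine but should be stated carefully; alternatively, one may simply cite the observation (made just before the statement) that $\Sc_{G'}$ is a face of $\Sc_G$ and quote Proposition~\ref{faceprop} directly.
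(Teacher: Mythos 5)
Your proposal is correct and follows exactly the route the paper takes: the paper simply observes that $\Sc_{G'}$ is a face of $\Sc_G$ when $G'$ is an induced subgraph and then cites Proposition~\ref{faceprop}. Your version merely fills in the (routine) verification that the supporting functional $\sum_{i \notin W} x_i$ cuts out this face and that dropping the identically zero coordinates identifies it with $\Sc_{G'}$ without changing the toric ideal.
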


If $G'$ is a subgraph of $G$, then $L(G')$ is an induced subgraph of $L(G)$.
Hence the following fact holds from $\Mc_G = \Sc_{L(G)}$.

\begin{Proposition}\label{subgraph prop}
Let $G'$ be a subgraph of $G$.
Then we have $\omega (I_{\Mc_{G'}}) \le \omega (I_{\Mc_{G}})$.
\end{Proposition}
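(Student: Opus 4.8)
The plan is to reduce the statement to the stable set setting, where the analogous fact has already been recorded just above it. Recall that $\Mc_{G'} = \Sc_{L(G')}$ and $\Mc_G = \Sc_{L(G)}$ after a change of coordinates, so it suffices to prove $\omega(I_{\Sc_{L(G')}}) \le \omega(I_{\Sc_{L(G)}})$. For this it would be enough to know that $L(G')$ is an \emph{induced} subgraph of $L(G)$, because the preceding proposition gives $\omega(I_{\Sc_{H'}}) \le \omega(I_{\Sc_H})$ whenever $H'$ is an induced subgraph of $H$ (this in turn comes from Proposition \ref{faceprop}, since $\Sc_{H'}$ is the face of $\Sc_H$ cut out by setting the coordinates outside $V(H')$ equal to zero).

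So the one point that needs checking is the implication: $G'$ a subgraph of $G$ implies $L(G')$ is an induced subgraph of $L(G)$. First I would note that the vertex set of $L(G')$ is $E(G')$, which is contained in $E(G) = V(L(G))$, and that $L(G')$ depends only on $E(G')$, so adding or deleting isolated vertices of $G'$ (equivalently, the precise choice of $V(G')$) is irrelevant. Next I would compare adjacency: for distinct $e, e' \in E(G')$, the pair $\{e,e'\}$ is an edge of $L(G')$ exactly when $e \cap e' \ne \emptyset$, and this is the very same condition for $\{e,e'\}$ to be an edge of $L(G)$. Hence the subgraph of $L(G)$ induced on the vertex subset $E(G')$ is precisely $L(G')$, as needed.

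With this in hand the proof concludes immediately: applying the preceding proposition to $H = L(G)$ and $H' = L(G')$ yields $\omega(I_{\Sc_{L(G')}}) \le \omega(I_{\Sc_{L(G)}})$, and translating back through $\Mc_{G'} = \Sc_{L(G')}$ and $\Mc_G = \Sc_{L(G)}$ gives the claim. There is essentially no obstacle; the only thing requiring (trivial) care is the verification that passing to line graphs turns an arbitrary subgraph into an induced subgraph, so that the non-induced notion of "subgraph" is harmless, with the rest being a direct appeal to facts already in the excerpt.
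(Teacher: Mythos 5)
Your proposal is correct and follows exactly the paper's own route: the paper likewise observes that $L(G')$ is an induced subgraph of $L(G)$ and then invokes the preceding proposition on induced subgraphs (via $\Mc_G = \Sc_{L(G)}$ and Proposition \ref{faceprop}). Your extra verification that passing to line graphs turns an arbitrary subgraph into an induced one is the same (implicit) key point, just spelled out in more detail.
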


\section{A Bound on $\omega(I_{\Sc_G})$}
\label{sect:stable}
In this section, we prove Theorem \ref{thm:degree} by providing an upper bound on the maximal degree of minimal generators of the toric ideal of a stable set polytope using vertex-colorings.

Let $G$ be a graph. For a $k$-vertex-coloring of $G$ and a color $1 \leq j \leq k$, let $M(f,j)$ denote the set of all vertices of color $j$. 
We say that two $k$-vertex-colorings $f$ and $g$ of $G$ \textit{differ by an $m$-colored subgraph} if there is a set of colors $S$ of size $m$ such that $M(f,j) \neq M(g,j)$ for each $j \in S$, but $M(f,j)=M(g,j)$ for each $j \notin S$.
For two $k$-vertex-colorings $f,g$ of $G$, we write $f \sim_r g$ if there exists a sequence $f_0,f_1,\ldots,f_s$ of $k$-vertex-colorings of $G$ with $f_0=f$ and $f_s=g$ such that $f_i$ differs from $f_{i-1}$ by a $k_i$-colored subgraph with $k_i \leq r$. 
We give a proof of Theorem \ref{thm:degree} by showing the following more general result.
\begin{Theorem}\label{thm:degree_general}
Let $G$ be a graph on $[d]$.
Then $\omega(I_{\Sc_G}) \leq r$ if and only if for any $\ab \in \ZZ_{\geq 0}^d$ and for any $k$-vertex-colorings $f$ and $g$ of $G_{\ab}$, one has $f \sim_r g$.
\end{Theorem}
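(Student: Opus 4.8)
The plan is to connect the monomial/binomial description of $I_{\Sc_G}$ from Proposition~\ref{prop:gen} to the notion $f \sim_r g$ for vertex-colorings, working on both sides of the equivalence. Recall from Section~\ref{sect:pre} that every monomial $m$ of degree $k$ in $R[\Sc_G]$ corresponds (via $a_p = |\{\ell : p \in S_{i_\ell}\}|$) to a $k$-vertex-coloring $f$ of $G_\ab$ with $\xb_f = m$, and that two colorings with the same $\xb$ are identified. The first key observation is a translation lemma: if $f$ and $g$ are $k$-vertex-colorings of $G_\ab$ that \emph{differ by an $m$-colored subgraph}, then $\deg_{R[\Sc_G]}\gcd(\xb_f,\xb_g) = k - m$, so $\xb_f - \xb_g$ lies in the ideal generated by binomials of degree $\le m$ whenever $\xb_f \ne \xb_g$ — indeed $\xb_f - \xb_g = \xb_{h}(\xb_{f'} - \xb_{g'})$ where $h$ records the $k-m$ common colors and $f', g'$ are the restrictions of $f, g$ to the union of the $m$ changed color classes, an $m$-vertex-coloring of a (smaller) vertex-replication graph. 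Conversely, any binomial $\xb_{f'} - \xb_{g'}$ of degree $m$ arising from $m$-vertex-colorings $f', g'$ of some $G_{\ab'}$ is, up to multiplying by a common monomial, a binomial coming from two $k$-vertex-colorings of a larger $G_\ab$ that differ by an $m$-colored subgraph. This dictionary is the technical heart.

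For the direction ``$\omega(I_{\Sc_G}) \le r$ $\Rightarrow$ $f \sim_r g$ always'': given $k$-vertex-colorings $f, g$ of $G_\ab$ with $\xb_f \ne \xb_g$, the binomial $\xb_f - \xb_g \in I_{\Sc_G}$ can be written as a combination of the degree-$\le r$ generators. I would run the standard toric/combinatorial argument: since $I_{\Sc_G}$ is generated in degree $\le r$, there is a sequence of ``moves'' $\xb_f = m_0, m_1, \dots, m_s = \xb_g$ where each $m_i$ is a monomial of degree $k$ (each $m_i = \xb_{f_i}$ for some $k$-vertex-coloring $f_i$ of $G_\ab$, because the exponent vector is unchanged by the $\pi$-preimage condition), consecutive ones agreeing up to a binomial generator of degree $\le r$ applied to a common cofactor; by the translation lemma each such move means $f_i$ and $f_{i-1}$ differ by a $k_i$-colored subgraph with $k_i \le r$. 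Hence $f \sim_r g$. The subtlety here is to make sure the intermediate monomials really are $\pi$-preimages of the same element, i.e. genuine $k$-vertex-colorings of $G_\ab$ and not of some other vertex-replication graph — this follows because applying a binomial in $I_{\Sc_G}$ preserves $\pi$-images and hence the associated $\ab$.

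For the converse ``$f \sim_r g$ always $\Rightarrow$ $\omega(I_{\Sc_G}) \le r$'': by Proposition~\ref{prop:gen}, $I_{\Sc_G}$ is generated by the binomials $\xb_f - \xb_g$ over all $k$-vertex-colorings $f, g$ of all $G_\ab$; it suffices to show each such binomial lies in the ideal $J$ generated by those of degree $\le r$. Given such $f, g$ (with $\xb_f \ne \xb_g$, same $\ab$, same $k$), apply the hypothesis to get a chain $f = f_0, \dots, f_s = g$ where $f_i$ differs from $f_{i-1}$ by a $k_i$-colored subgraph, $k_i \le r$. By the translation lemma, $\xb_{f_{i-1}} - \xb_{f_i}$ is a monomial multiple of a binomial of degree $k_i \le r$ coming from $k_i$-vertex-colorings of a vertex-replication graph, hence lies in $J$; telescoping, $\xb_f - \xb_g \in J$. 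Then Theorem~\ref{thm:degree} itself follows by specializing $G$ to $L(G')$ and using $\Mc_{G'} = \Sc_{L(G')}$ together with the fact that edge-colorings of $G'^{(e)}_\ab$ correspond exactly to vertex-colorings of $(L(G'))_\ab$ — here one checks that the vertex-replication graph of a line graph is the line graph of the edge-replication multigraph (vertices of $L(G')$ blown up into cliques = parallel edges of $G'$, whose line graph relation is a clique). I expect the main obstacle to be precisely this last identification and, more delicately, handling the edge cases where $k < \chi(G_\ab)$ or $\xb_f = \xb_g$ so that the relevant binomial is zero and the statement $f \sim_r g$ is vacuous or trivial; these need to be dispatched carefully so that the equivalence is clean in both directions, including the convention $\omega(\{0\}) = 2$.
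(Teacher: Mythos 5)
Your proposal is correct and follows essentially the same route as the paper: the "translation lemma" you describe is exactly the paper's factorization $\xb_{f_{i-1}}-\xb_{f_i}=\xb_{f_i|_{\overline{S_i}}}(\xb_{f_{i-1}|_{S_i}}-\xb_{f_i|_{S_i}})$ used to telescope in the "if" direction, and your monomial-walk argument (with the observation that intermediate monomials are again $\xb$-images of $k$-colorings of the same $G_\ab$, adjusted by permuting colors and recoloring within each clique $G^{(j)}$) is precisely the paper's "only if" direction via the standard expression $F=\sum_i\xb^{\wb_i}(\xb_{f_i}-\xb_{g_i})$ for binomial ideals. The derivation of Theorem \ref{thm:degree} via $G^{(e)}_\ab = L(G)_\ab$ also matches the paper.
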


\begin{proof}
    (only if) 
Let $f$ and $g$ be $k$-vertex-colorings of $G_{\ab}$ with $\ab \in \ZZ_{\geq 0}^d$ and $k > r$.
    Then the binomial $F:=\xb_f-\xb_g$ belongs to $I_{\Sc_G}$. 
    If $F=0$, then $g$ is obtained from $f$ by permuting colors.
    Assume that $F \neq 0$.
    By the hypothesis, $F$ is generated by binomials of degree at most $r$ in $I_{\Sc_G}$.
    From the theory of binomial ideals \cite[Lemma 3.8]{HHObook}, there exists an expression
\begin{equation}
F = \sum_{i=1}^s {\bf x}^{\wb_i} (\xb_{f_i} - \xb_{g_i}),
\label{tenkai}
\end{equation}
where for each $i$, $f_i$ and $g_i$ are $k_i$-vertex-colorings of $G_{\ab_i}$ with $\ab_i \in \ZZ_{\geq 0}^d, k_i \leq r$ and 
$\xb_{f_i} - \xb_{g_i}$ $(\ne 0)$ is an irreducible binomial in $I_{\Sc_G}$.
We may suppose that $\xb_f=\xb^{\wb_1} \xb_{f_1}$ and $\xb^{\wb_s}\xb_{g_s}=\xb_g$.
Set $\xb_{f_1}=x_{S_1}x_{S_2}\cdots x_{S_{k_1}}, \xb_{g_1}=x_{S'_1}x_{S'_2}\cdots x_{S'_{k_1}}$ and $\xb^{\wb_1}=x_{T_1}x_{T_2}\cdots x_{T_{k-k_1}}$ with $S_i,S'_i,T_j \in S(G)$.
Since $\xb_{f_1}-\xb_{g_1} \in I_{\Sc_G}$, one has $S:=\bigcup_{1 \leq i \leq k_1} S_i=\bigcup_{1 \leq i \leq k_1} S'_i$ as multisets.
Moreover, it follows from $\xb_f-\xb^{\wb_1} \xb_{g_1} \in I_{\Sc_G}$ that there exists a $k$-vertex-coloring $g'_1$ of $G_{\ab}$ such that $\xb_{g'_1}=\xb^{\wb_1} \xb_{g_1}$.
Then by exchanging colors and exchanging the coloring of vertices in each clique $G^{(j)}$ of $G_{\ab}$ if necessary, we can assume that $M(f,j)\neq M(g'_1,j)$ for each $j \in S$ and $M(f,j)= M(g'_1,j)$ for each $j \notin S$.
This implies that $g'_1$ differs from $f$ by a $k_1$-colored subgraph.
By performing this process repeatedly, we can obtain a sequence $g_0',g_1',\ldots,g_s'$ of $k$-vertex-colorings of $G_{\ab}$ with $g_0'=f$ and 
$\xb_{g_s'}=\xb_g$
such that $g_i'$ differs from $g_{i-1}'$ by a $k_i$-colored subgraph with $k_i \leq r$.
Then $g$ is obtained from $g_s'$ by permuting colors.
Hence one has $f \sim_r g$.

(if) 
Let $F=\xb_f-\xb_g \in I_{\Sc_G}$
where $f$ and $g$ are 
$k$-vertex-colorings of $G_{\ab}$
with $\ab \in \ZZ_{\geq 0}^d$ and $k > r$.
From the assumption, there exists a sequence $f_0,f_1,\ldots,f_t$ of $k$-vertex-colorings of $G_{\ab}$ with $f_0=f$ and $f_t=g$ such that $f_i$ differs from $f_{i-1}$ by a $k_i$-colored subgraph with $k_i \leq r$.
Then there exists a set $S_i$ of colors
with $|S_i| = k_i$ such that $M(f_i,j)\neq M(f_{i-1},j)$ for each $j \in S_i$ and $M(f_{i},j)= M(f_{i-1},j)$ for each $j \notin S_i$.
Let $f_i|_{S_i}$ be the $k_i$-vertex-coloring of the induced subgraph of $G_{\ab}$ on the vertex set $f_i^{-1}(S_i)$
induced from $f_i$.
Then one has $\xb_{f_{i-1}|_{S_i}}-\xb_{f_{i}|_{S_i}} \in I_{\Sc_G}$.
Similarly, let $f_i|_{\overline{S_i}}$ be 
the $(k-k_i)$-vertex-coloring of the induced subgraph of $G_{\ab}$ on the vertex set $\{ j \in V(G_\ab) : f(j) \notin S_i\}$ induced from $f_i$.
Then we obtain 
$\xb_{f_{i-1}|_{\overline{S_i}}}=\xb_{f_i|_{\overline{S_i}}}, \xb_{f_{i-1}|_{\overline{S_i}}}\xb_{f_{i-1}|_{S_i}}=\xb_{f_{i-1}}$ and $\xb_{f_{i}|_{\overline{S_i}}}\xb_{f_{i}|_{S_i}}=\xb_{f_{i}}$.
Hence one has
\begin{align*}
F=\xb_f-\xb_g&=(\xb_{f_0}-\xb_{f_1})+(\xb_{f_1}-\xb_{f_2})+\cdots+(\xb_{f_{t-1}}-\xb_{f_t})\\
&=
\xb_{f_{1}|_{\overline{S_1}}}(\xb_{f_{0}|_{S_1}}-\xb_{f_{1}|_{S_1}})+\xb_{f_{2}|_{\overline{S_2}}}(\xb_{f_{1}|_{S_2}}-\xb_{f_{2}|_{S_2}})+\cdots+\xb_{f_{t}|_{\overline{S_t}}}(\xb_{f_{t-1}|_{S_t}}-\xb_{f_{t}|_{S_t}}).
\end{align*}
Since $\xb_{f_{i-1}|_{S_i}}-\xb_{f_{i}|_{S_i}} \in I_{\Sc_G}$ is a binomial of degree $k_i \leq r$, $F$ is generated by binomials of degree $\leq r$.
This implies $\omega(I_{\Sc_G}) \leq r$.
 \end{proof}

\begin{proof}[Proof of Theorem \ref{thm:degree}]
Let $G$ be a simple graph with $d$ edges and take $\ab \in \ZZ_{\geq 0}^d$.
Then the edge-replication multigraph $G^{(e)}_{\ab}$ coincides with the vertex-replication graph $L(G)_{\ab}$. Moreover, a $k$-edge-coloring $f$ of $G^{(e)}_{\ab}$ can be regard as a $k$-vertex-coloring of $L(G)_{\ab}$.
In particular, two $k$-edge-colorings $f$ and $g$ of $G^{(e)}_{\ab}$ differ by an $m$-colored subgraph if and only if $f$ and $g$ differ by an $m$-colored subgraph as $k$-vertex-colorings of $L(G)_{\ab}$. Hence Theorem \ref{thm:degree} follows from Theorem~\ref{thm:degree_general}.  
\end{proof}

\section{$r$-coloring ideals}\label{sect:method}
In this section, we give an algebraic method to determine if $f \sim_r g$ for two $k$-vertex-colorings $f,g$ of a graph $G$.

Given a $k$-vertex-coloring $f$ of $G$, and integers $1 \le i < j \le k$,
let $H$ be a connected component of the induced subgraph of $G$ 
on the vertex set $f^{-1}(i) \cup f^{-1}(j)$.
Then we can obtain a new $k$-vertex-coloring $g$ of $G$ by setting
$$
g(x) =
\begin{cases}
    f(x) & x \notin H,\\
    i & x \in H \mbox{ and } f(x) =j,\\
    j & x \in H \mbox{ and } f(x) =i.\\
\end{cases}
$$
We say that $g$ is obtained from $f$ by a {\em Kempe switching}.
Two $k$-vertex-colorings $f$ and $g$ of $G$ are called {\em Kempe equivalent}, if there exists
a sequence $f_0,f_1,\ldots,f_s$ of $k$-vertex-colorings of $G$ such that $f_0=f$, $f_s=g$,
and $f_i$ is obtained from $f_{i-1}$ by a Kempe switching.
It is easy to see that $f$ and $g$ are Kempe equivalent if and only if $f \sim_2 g$.
In \cite{OhsugiTsuchiya2024Kempe}, the third and fourth author introduced $2$-coloring ideals associated with graphs to examine when $f \sim_2 g$.
Given a graph $G$ on $[d]$, the \textit{$2$-coloring ideal} is defined as follows:
\begin{align*}
    J_{G,2}:=& \langle \xb_f - \xb_g : \mbox{$f$ and $g$ are $2$-colorings of $G_{\ab}$ with $\ab \in \{0,1\}^d$}\rangle\\
    =&\langle \xb_f - \xb_g : \mbox{$f$ and $g$ are $2$-colorings of an induced subgraph of $G$} \rangle \subset  R[\Sc_G].\\
\end{align*}
\begin{Proposition}[{\cite[Theorem 1.1]{OhsugiTsuchiya2024Kempe}}]
Let $G$ be a graph and take two $k$-vertex-colorings $f,g$ of $G$.
Then $f \sim_2 g$ if and only if $\xb_f -\xb_g \in J_{G,2}$.
\end{Proposition}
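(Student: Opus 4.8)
The plan is to adapt the algebraic strategy already used for the $2$-coloring ideal $J_{G,2}$ to an analogous \emph{$r$-coloring ideal} $J_{G,r}$, and then to follow essentially the same two-directional argument as in the proof of Theorem \ref{thm:degree_general}. First I would define
\[
J_{G,r} := \langle \xb_f - \xb_g : \mbox{$f$ and $g$ are $j$-vertex-colorings of $G_\ab$ with $\ab \in \ZZ_{\ge 0}^d$ and $j \le r$} \rangle \subset R[\Sc_G],
\]
the ideal generated by all binomials coming from colorings that use at most $r$ colors (equivalently, by binomials of degree at most $r$ in $I_{\Sc_G}$, since every generator of $I_{\Sc_G}$ has this shape by Proposition \ref{prop:gen}). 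The claim to prove will be: for two $k$-vertex-colorings $f,g$ of a graph $G$, one has $f \sim_r g$ if and only if $\xb_f - \xb_g \in J_{G,r}$. This is the natural common generalization of the $r=2$ case, and for $r=2$ it recovers the cited result of \cite{OhsugiTsuchiya2024Kempe}.

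For the direction ``$f \sim_r g \Rightarrow \xb_f - \xb_g \in J_{G,r}$'', I would reuse the telescoping identity from the (if) half of the proof of Theorem \ref{thm:degree_general}: if $f = f_0, f_1, \dots, f_t = g$ is a chain in which $f_i$ differs from $f_{i-1}$ by a $k_i$-colored subgraph with $k_i \le r$, then
\[
\xb_f - \xb_g = \sum_{i=1}^t \xb_{f_i|_{\overline{S_i}}}\bigl(\xb_{f_{i-1}|_{S_i}} - \xb_{f_i|_{S_i}}\bigr),
\]
and each binomial $\xb_{f_{i-1}|_{S_i}} - \xb_{f_i|_{S_i}}$ has degree $k_i \le r$, hence lies in $J_{G,r}$ by definition. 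For the converse, given $\xb_f - \xb_g \in J_{G,r}$ with $f,g$ being $k$-vertex-colorings, I would invoke the binomial-ideal expansion \cite[Lemma 3.8]{HHObook} to write $\xb_f - \xb_g = \sum_i \xb^{\wb_i}(\xb_{f_i} - \xb_{g_i})$ with each $\xb_{f_i} - \xb_{g_i}$ an irreducible binomial of degree $k_i \le r$, and then run verbatim the ``step-by-step realization'' argument from the (only if) half of Theorem \ref{thm:degree_general}: at each step the multiset equality $\bigcup S_j = \bigcup S_j'$ lets us lift $\xb^{\wb_i}\xb_{g_i}$ to an honest $k$-vertex-coloring $g_i'$ of $G_\ab$, and after permuting colors and rearranging the colorings inside each clique $G^{(j)}$ one checks that $g_i'$ differs from the previous coloring by a $k_i$-colored subgraph. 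Since we are working over a fixed graph $G$ (not over all replications), I should note that $f,g$ are colorings of $G$ itself, so $\ab = (1,\dots,1)$ and no passage to $G_\ab$ is actually needed on the outside — but the intermediate binomials in the expansion may still involve replications $G_{\ab_i}$, which is harmless.

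The main obstacle I anticipate is the bookkeeping in the converse direction: making precise the claim that each lifted coloring $g_i'$ really ``differs from $g_{i-1}'$ by a $k_i$-colored subgraph'' rather than by more colors. The subtlety is exactly the one already handled in Theorem \ref{thm:degree_general} — after the lift one has freedom to permute the $k_i$ new colors among themselves and to redistribute colors within each clique $G^{(j)}$, and one must argue that this freedom suffices to arrange $M(g_{i-1}', j) \ne M(g_i', j)$ precisely on the intended set $S_i$ and $M(g_{i-1}', j) = M(g_i', j)$ off it. Since this is essentially a transcription of an argument already in the paper, I would keep the write-up brief: state $J_{G,r}$, give the two-line telescoping for one direction, and for the other direction cite the proof of Theorem \ref{thm:degree_general} and indicate that the same reasoning applies with $r$ in place of the bound there. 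Everything else (primeness, homogeneity, the description of generators) is available from Section \ref{sect:pre}.
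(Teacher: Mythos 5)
Your proposal is correct and follows essentially the same route as the paper: the authors also deduce this statement (and its generalization to $J_{G,r}$) by rerunning the two halves of the proof of Theorem \ref{thm:degree_general} — the telescoping identity for one direction and the binomial-expansion/lifting argument of \cite[Lemma 3.8]{HHObook} for the other. The only deviation is that you allow all $\ab \in \ZZ_{\ge 0}^d$ in the generators of $J_{G,r}$ while the paper restricts to induced subgraphs ($\ab \in \{0,1\}^d$); this is harmless, since your telescoping step produces only induced-subgraph binomials when $f,g$ color $G$ itself, and your converse applies a fortiori to the smaller ideal.
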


We generalize this result to determine $f \sim_r g$.
Given an integer $r \geq 2$, we define the \textit{$r$-coloring ideal} of $G$ as follows:
\begin{align*}
    J_{G,r}:=&\langle \xb_f - \xb_g : \mbox{$f$ and $g$ are $k$-vertex-colorings of an induced subgraph of $G$ with $k \leq r$} \rangle \subset R[\Sc_G].
\end{align*}
Note that $J_{G,r} \subset J_{G,r+1}$.
For $r \ge |V(G)|$, one has $J_{G,r}=J_{G,r+1}$.

\begin{Theorem}
Let $G$ be a graph and take two $k$-vertex-colorings $f,g$ of $G$.
Then $f \sim_r g$ if and only if $\xb_f -\xb_g \in J_{G,r}$.    
\end{Theorem}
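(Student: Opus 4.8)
The plan is to mirror the proof of Theorem~\ref{thm:degree_general} almost verbatim, with the range of allowed colors $k \le r$ playing the role of the bound on colored subgraphs. The statement to prove is really an ``$r$-local'' refinement: instead of asking whether $f$ can be transformed into $g$ through colorings where each step alters at most $r$ color classes globally, we ask whether $\xb_f-\xb_g$ lies in the ideal generated by \emph{all} binomials coming from pairs of colorings of induced subgraphs using at most $r$ colors. The two directions should be handled separately.

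\medskip
\emph{(if).} Suppose $\xb_f-\xb_g \in J_{G,r}$. By the structure theorem for binomial ideals \cite[Lemma 3.8]{HHObook}, we may write
\[
\xb_f-\xb_g = \sum_{i=1}^s \xb^{\wb_i}\,(\xb_{f_i}-\xb_{g_i}),
\]
where each $f_i,g_i$ is a $k_i$-vertex-coloring of an induced subgraph of $G$ with $k_i \le r$, each $\xb_{f_i}-\xb_{g_i}$ is a nonzero irreducible binomial, and the telescoping arrangement $\xb_f = \xb^{\wb_1}\xb_{f_1}$, $\xb^{\wb_i}\xb_{g_i}=\xb^{\wb_{i+1}}\xb_{f_{i+1}}$, $\xb^{\wb_s}\xb_{g_s}=\xb_g$ holds. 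At step $i$ the monomial $\xb^{\wb_i}$ accounts for $k-k_i$ of the colors and $\xb_{f_i}$ (resp.\ $\xb_{g_i}$) for the remaining $k_i$. Since $\xb_{f_i}-\xb_{g_i}\in I_{\Sc_G}$, the colorings $f_i$ and $g_i$ use the same multiset of vertices on these $k_i$ colors, so after permuting colors and (as in the proof of Theorem~\ref{thm:degree_general}) redistributing the coloring inside each replicated clique when $G_\ab$ with nontrivial $\ab$ is involved, one produces a genuine $k$-vertex-coloring $g_i'$ of $G$ that agrees with the previous coloring off a set $S_i$ of $k_i$ colors and differs from it on $S_i$. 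This exhibits a chain $f = g_0', g_1', \dots, g_s'$ with $\xb_{g_s'}=\xb_g$ in which each step differs by a $k_i$-colored subgraph, $k_i \le r$; hence $f\sim_r g$. One subtlety to check carefully: here $f,g$ are colorings of $G$ itself (the null-replication case $\ab=(1,\dots,1)$), so after the manipulation we must land back in colorings of $G$, not of some $G_\ab$; but any induced subgraph of $G$ is an induced subgraph of $G_\ab$ for $\ab = (1,\dots,1)$, so no difficulty arises.

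\medskip
\emph{(only if).} Suppose $f\sim_r g$, with intermediate colorings $f=f_0,f_1,\dots,f_t=g$ where $f_i$ differs from $f_{i-1}$ on a color set $S_i$ with $|S_i|=k_i\le r$. For each $i$, restrict $f_{i-1}$ and $f_i$ to the induced subgraph of $G$ on $f_i^{-1}(S_i) = f_{i-1}^{-1}(S_i)$ to get $k_i$-vertex-colorings $f_{i-1}|_{S_i}$ and $f_i|_{S_i}$ of an induced subgraph of $G$ with $k_i\le r$ colors; their difference $\xb_{f_{i-1}|_{S_i}}-\xb_{f_i|_{S_i}}$ lies in $J_{G,r}$ by definition. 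The restrictions to the complementary color classes agree, $\xb_{f_{i-1}|_{\overline{S_i}}}=\xb_{f_i|_{\overline{S_i}}}$, and multiply back: $\xb_{f_{i-1}}=\xb_{f_{i-1}|_{\overline{S_i}}}\xb_{f_{i-1}|_{S_i}}$ and similarly for $f_i$. Telescoping,
\[
\xb_f-\xb_g = \sum_{i=1}^t \xb_{f_i|_{\overline{S_i}}}\bigl(\xb_{f_{i-1}|_{S_i}}-\xb_{f_i|_{S_i}}\bigr) \in J_{G,r}.
\]

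\medskip
The argument is essentially a bookkeeping translation of the proof of Theorem~\ref{thm:degree_general}, so I do not anticipate a serious obstacle; the one place demanding care is the ``(if)'' direction, specifically the passage from the abstract binomial decomposition to an honest chain of recolorings of $G$ --- matching up colors across consecutive summands and verifying that the color set on which two successive colorings differ has size exactly $k_i$ (rather than smaller, which would only help) --- exactly as in the earlier proof, where the clique-redistribution inside each $G^{(j)}$ is invoked. Since here we work with $G_\ab$ for $\ab=(1,\dots,1)$ there are no nontrivial cliques, so that part is even simpler, and I would cite \cite[Lemma 3.2]{OhsugiTsuchiya2023Kempe} only to the extent needed to realize monomials as colorings.
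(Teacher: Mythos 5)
Your proof is correct and is precisely the ``similar argument'' the paper itself invokes: the paper's proof of this theorem is a one-line deferral to the proof of Theorem~\ref{thm:degree_general}, and your two directions (telescoping for $f\sim_r g\Rightarrow \xb_f-\xb_g\in J_{G,r}$, and the binomial-decomposition chain of recolorings for the converse) reproduce that argument step for step. Your remark that the clique-redistribution step degenerates here because every intermediate monomial corresponds to a coloring of $G=G_{(1,\dots,1)}$ is also accurate.
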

\begin{proof}
   This follows by a similar argument as in the proof of Theorem \ref{thm:degree_general}.
\end{proof}

\section{A bound on $\omega(I_{\Mc_G})$ for a line perfect graph}
\label{sect:matching}
In this section, we give a proof of Theorem \ref{thm:bip_quad} by showing a more general result for line perfect graphs.

Let $G$ be a graph on $[d]$ with edge set $E(G)$. A subset $C \subset [d]$ is called \textit{clique} of $G$ if for any $i,j \in C$ with $i \neq j$, $\{i,j\} \in E(G)$. 
Let $\omega(G)$ denote the maximum cardinality of cliques of $G$.
A graph $G$ is called \textit{perfect} if every induced subgraph $H$ of $G$ satisfies $\chi(H)=\omega(H)$. Perfect graphs were introduced by Berge in \cite{Berge1960}.
A \textit{hole} is an induced cycle of length $\geq 5$ and an \textit{antihole} is the complement of a hole.
In \cite{strongperfect}, Chudnovsky, Robertson, Seymour and Thomas showed that a graph is perfect if and only if it has no odd holes and no odd antiholes. This result is called the strong perfect graph theorem.

A \textit{line perfect graph} is a graph whose line graph is perfect. 
Note that every bipartite graph is line perfect.
A characterization of line perfect graphs is known.
A vertex $v$ of a connected graph $G$ is called a \textit{cut vertex} if the graph obtained by the removal of $v$ from $G$ is disconnected. Given a graph $G$, a \textit{block} of $G$ is a maximal connected subgraph of $G$ with no cut vertices.
\begin{Proposition}[\cite{Maf, Tro}]
\label{lineperfect}
Let $G$ be a graph. Then the following conditions are equivalent{\rm :}
\begin{itemize}
    \item[\rm (i)] $G$ is line perfect{\rm ;} 
    \item[\rm (ii)] $G$ has no odd cycle of length $\ge 5$ as a subgraph{\rm ;}
    \item[\rm (iii)] each block of $G$ is either a bipartite graph, $K_4$, or $K_{1,1,n}$.
\end{itemize}
\end{Proposition}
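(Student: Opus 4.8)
This is a classical result of Trotter \cite{Tro} and Maffray \cite{Maf}; here is an outline of how I would prove it. The plan is to establish the combinatorial equivalence (ii) $\Leftrightarrow$ (iii) by an elementary analysis of blocks, and then to deduce (i) $\Leftrightarrow$ (ii) from the strong perfect graph theorem \cite{strongperfect}. For (iii) $\Rightarrow$ (ii) I would observe that every cycle of $G$ is $2$-connected and hence lies inside a single block, so it suffices to check that a bipartite graph, $K_4$, and $K_{1,1,n}$ each contain no odd cycle of length $\ge 5$: bipartite graphs have no odd cycle at all; $K_4$ has only four vertices; and $K_{1,1,n}$ is $K_{2,n}$ together with the edge $ab$ joining the two vertices $a,b$ that form the part of size $2$, so any cycle meets the independent part of size $n$ in at most two vertices (only $a$ and $b$ can separate them) and therefore has length at most $4$.

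For (ii) $\Rightarrow$ (iii), assume $G$ has no odd cycle of length $\ge 5$ and let $B$ be a block; if $B$ has at most two vertices or is bipartite there is nothing to prove, so assume $B$ is $2$-connected and non-bipartite. Since an odd cycle of length $\ge 5$ is forbidden, a shortest odd cycle of $B$ has length $3$, say on the vertex set $T = \{a,b,c\}$. First I would show that every $v \in V(B) \setminus T$ is adjacent to at least two vertices of $T$: by (the fan version of) Menger's theorem there are two $v$--$T$ paths $P_1, P_2$ that are disjoint except at $v$ and meet $T$ only at their distinct endpoints $x, y$, and then $P_1 \cup \{xy\} \cup P_2$ and $P_1 \cup \{xz, zy\} \cup P_2$ (with $z$ the third vertex of $T$) are cycles of consecutive lengths $\ell(P_1) + \ell(P_2) + 1$ and $\ell(P_1) + \ell(P_2) + 2$, exactly one of which is odd; if $\ell(P_1) + \ell(P_2) \ge 3$ that odd cycle would have length $\ge 5$, so $P_1$ and $P_2$ are single edges. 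Next I would show that no two vertices of $T$ can each be non-adjacent to some vertex outside $T$: if some $u \in V(B) \setminus T$ had $u \not\sim a$ and some $w \in V(B) \setminus T$ had $w \not\sim b$, then $u$ and $w$ would be distinct (a vertex missing both $a$ and $b$ would be adjacent to at most one vertex of $T$) with $u \sim b,c$ and $w \sim a,c$, and the vertices $u, c, w, a, b$, in this cyclic order, would form a $5$-cycle; the same argument applies to any two vertices of $T$. Hence, relabelling $T$ so that $a$ and $b$ are adjacent to every vertex outside $T$, every vertex of $V(B) \setminus \{a,b\}$ is adjacent to both $a$ and $b$, and $a \sim b$. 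Setting $W = V(B) \setminus \{a,b\} \ni c$, I would finish by examining $B[W]$: if it has no edge then $B$ is the complete tripartite graph $K_{1,1,|W|}$; if it has an edge $cd$ and $|W| \ge 3$, then for any $e \in W \setminus \{c,d\}$ the vertices $e, a, d, c, b$ form a $5$-cycle, which is impossible, so $|W| = 2$ and $B$, carrying all six edges on $\{a,b,c,d\}$, equals $K_4$.

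For (i) $\Leftrightarrow$ (ii) I would first match the relevant induced subgraphs of $L(G)$ with cycle subgraphs of $G$. If edges $e_{i_1}, \dots, e_{i_m}$ of $G$ induce a copy of $C_m$ in $L(G)$ with $m \ge 5$, then consecutive ones meet while non-consecutive ones are disjoint, and a short check forces the $m$ shared endpoints to be pairwise distinct, so the $e_{i_j}$ trace out an $m$-cycle in $G$; conversely the $m$ edges of an odd cycle of length $\ge 5$ in $G$ induce an odd hole in $L(G)$. Thus $L(G)$ has an odd hole if and only if $G$ has an odd cycle of length $\ge 5$ as a subgraph. Since $\overline{C_5} = C_5$, and since for odd $m \ge 7$ the antihole $\overline{C_m}$ is not a line graph (for instance, via Krausz's characterization of line graphs by clique edge-decompositions combined with a counting argument on the maximum cliques of $\overline{C_m}$), no induced subgraph of $L(G)$ is an odd antihole of length $\ge 7$. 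The strong perfect graph theorem \cite{strongperfect} then yields that $L(G)$ is perfect precisely when $G$ has no odd cycle of length $\ge 5$, which is the desired equivalence (i) $\Leftrightarrow$ (ii).

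The step I expect to be the main obstacle is the control of antiholes: verifying that $\overline{C_m}$ is never a line graph for odd $m \ge 7$ is the one ingredient that is not self-contained and has to be imported from the structure theory of line graphs (Krausz or Beineke); alternatively, one can avoid the strong perfect graph theorem altogether and argue directly, as in \cite{Tro}, that a minimally imperfect line graph must be an odd hole. By comparison, (ii) $\Rightarrow$ (iii) is elementary once the triangle $T$ is fixed, although it still requires the Menger/fan argument above and some care with the degenerate blocks $K_1$ and $K_2$ and with vertices of $B$ lying in, or adjacent to, $T$.
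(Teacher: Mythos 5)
The paper states Proposition~\ref{lineperfect} without proof, citing \cite{Maf,Tro}, so there is nothing internal to compare against; judged on its own terms, your argument is correct and complete modulo three standard imported facts. The block analysis for (ii) $\Leftrightarrow$ (iii) is sound: the fan lemma correctly forces every vertex outside the triangle $T$ to see two of its vertices, the $5$-cycle $u,c,w,a,b$ correctly rules out two deficient vertices of $T$, and the final split on whether $W=V(B)\setminus\{a,b\}$ is independent cleanly yields $K_{1,1,n}$ versus $K_4$ (with $K_3=K_{1,1,1}$ absorbed into the first case). The translation between odd cycles of length $\ge 5$ in $G$ (as subgraphs, not necessarily induced) and odd holes in $L(G)$ is also right, since chords of a cycle in $G$ are simply other vertices of $L(G)$ outside the chosen set. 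The one ingredient you flag as the obstacle --- that odd antiholes of length $\ge 7$ cannot occur in a line graph --- is handled in this paper by exactly the device you would need: in the proof of Theorem~\ref{line perfectcly contractile} the authors exhibit a five-vertex graph $H$ whose complement is $K_2\sqcup P_3$, note that $H$ is a Beineke forbidden induced subgraph of line graphs, and observe that $\overline{C_m}$ with $m\ge 7$ contains $H$ because $C_m$ contains an induced $K_2\sqcup P_3$; combined with $\overline{C_5}=C_5$ this makes your appeal to the strong perfect graph theorem \cite{strongperfect} go through. The only caveat is historical rather than mathematical: Trotter's original proof predates the strong perfect graph theorem and argues directly that a minimally imperfect line graph is an odd hole, which you already acknowledge as the alternative route.
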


The graph obtained by gluing two graphs at a clique $C$ of them is called a $|C|$-{\it clique sum} of them.
(Here we do not remove any edges of the clique.)
Clique sums of more than two graphs are defined by repeated application of this operation.

\begin{Proposition}[{\cite[Proposition 1]{MOS}}]
\label{stable_sum}
    Suppose that $G$ is a clique sum of graphs $G_1$ and $G_2$.
    Then one has $$\omega(I_{\Sc_G})= \max \{\omega(I_{\Sc_{G_1}}), \omega(I_{\Sc_{G_2}}) \}.$$
\end{Proposition}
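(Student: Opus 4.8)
The plan is to establish the two inequalities $\omega(I_{\Sc_G})\ge\max\{\omega(I_{\Sc_{G_1}}),\omega(I_{\Sc_{G_2}})\}$ and $\omega(I_{\Sc_G})\le\max\{\omega(I_{\Sc_{G_1}}),\omega(I_{\Sc_{G_2}})\}$ separately. Write $C$ for the clique along which the sum is formed, so that $V(G)=V(G_1)\cup V(G_2)$, $V(G_1)\cap V(G_2)=V(C)$, $E(G)=E(G_1)\cup E(G_2)$, and $G$ has no edge joining $V(G_1)\setminus V(C)$ to $V(G_2)\setminus V(C)$. The inequality ``$\ge$'' is immediate: since $C$ is complete, each $G_i$ is an \emph{induced} subgraph of $G$, hence $\Sc_{G_i}$ is a face of $\Sc_G$, and Proposition~\ref{faceprop} gives $\omega(I_{\Sc_{G_i}})\le\omega(I_{\Sc_G})$.

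For ``$\le$'' set $r:=\max\{\omega(I_{\Sc_{G_1}}),\omega(I_{\Sc_{G_2}})\}$; note $r\ge 2$. By Theorem~\ref{thm:degree_general} it suffices to show $f\sim_r g$ for every $\ab\in\ZZ^d_{\ge 0}$ (with $d=|V(G)|$) and every pair of $k$-vertex-colorings $f,g$ of $G_\ab$. The first observation is that $G_\ab$ is itself a clique sum: unravelling the definitions one gets $G_\ab=H_1\cup_{C'}H_2$, where $H_i$ is the vertex-replication of $G_i$ along the restriction of $\ab$, $C'$ is the complete graph obtained by replicating $C$, one has $V(H_1)\cap V(H_2)=V(C')$, and $G_\ab$ has no edge joining $V(H_1)\setminus V(C')$ to $V(H_2)\setminus V(C')$. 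Since $\omega(I_{\Sc_{G_i}})\le r$, Theorem~\ref{thm:degree_general} applied to $G_i$ says that any two $k$-colorings of $H_i$ are related by $\sim_r$ inside $H_i$. Hence everything reduces to the following self-contained claim: \emph{if $H=H_1\cup_{C'}H_2$ is a clique sum as above and, for $i=1,2$, any two $k$-colorings of $H_i$ are $\sim_r$ in $H_i$, then any two $k$-colorings of $H$ are $\sim_r$ in $H$.}

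To prove this claim I would isolate a single lifting step. Given a $k$-coloring $\Psi$ of $H$ and a $k$-coloring $\psi$ of $H_1$ that differs from $\Psi|_{H_1}$ by an $m$-colored subgraph with color set $S$, $m=|S|\le r$, one builds a $k$-coloring $\Psi'$ of $H$ with $\Psi'|_{H_1}=\psi$ whose restriction to $H_2$ is obtained from $\Psi|_{H_2}$ by applying to the $S$-colored vertices of $H_2$ the permutation $\tau$ of $S$ that is \emph{forced} on the clique $V(C')$ (there both colorings restrict to injections into $S$ on their $S$-colored parts, so the recoloring of $V(C')$ is literally a permutation of $S$), extended by the identity. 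One then checks that $\Psi'$ is well defined and proper --- using that every edge of $H$ lies inside $H_1$ or inside $H_2$ --- and that $\Psi'$ differs from $\Psi$ by an $m'$-colored subgraph with $m'\le m\le r$, the point being that the compensating recoloring on the $H_2$-side moves only the colors of $S$. Iterating this step along an $\sim_r$-sequence in $H_1$ from $f|_{H_1}$ to a relabeling of $g|_{H_1}$ lifts it to an $\sim_r$-sequence in $H$ starting at $f$; applying the symmetric step to an $\sim_r$-sequence in $H_2$ from the resulting $H_2$-coloring to a relabeling of $g|_{H_2}$ then produces a coloring $\Theta$ that agrees with some global relabeling $g'$ of $g$ on $V(H_2)$ and differs from $g'$ on $V(H_1)\setminus V(C')$ only through a color permutation $\sigma$ fixing every color occurring on $V(C')$. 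Finally one clears $\sigma$: writing it as a product of transpositions of colors not occurring on $V(C')$, each such transposition is realized on $H$ by swapping those two colors on $V(H_1)$ alone --- a $\sim_2$, hence $\sim_r$, move, valid precisely because $V(C')$ stays untouched and there is no edge between $V(H_1)\setminus V(C')$ and $V(H_2)\setminus V(C')$. This yields $f\sim_r\Theta\sim_r g'$, and $g'$ is a relabeling of $g$, so $f\sim_r g$.

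The main obstacle is exactly this clique interface: a single $\sim_r$-move on $H_1$ can recolor vertices of $V(C')$ and so destroy properness along the edges from $V(C')$ into $V(H_2)\setminus V(C')$, which rules out the naive ``recolor one side, then the other'' strategy. The device that gets past it --- dragging along, on the opposite side, a compensating color permutation supported on exactly the $\le r$ colors being moved, so that the combined recoloring remains an $r$-colored subgraph --- is the crux, and I expect the fiddliest part of the write-up to be the bookkeeping of the residual global relabelings these drags create and their removal by cheap two-color swaps at the end. The one further point that needs care is the preliminary verification that $G_\ab$ is the clique sum of the vertex-replications of $G_1$ and $G_2$, which is what lets one feed the two pieces into Theorem~\ref{thm:degree_general}.
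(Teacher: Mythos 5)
Your proof is correct, and it necessarily differs from the paper's, because the paper gives no proof of this statement at all: it is quoted from \cite[Proposition 1]{MOS}, where the argument is algebraic (a generator-lifting argument for toric ideals under clique sums). Your route instead runs everything through the coloring criterion of Theorem~\ref{thm:degree_general} and proves the purely combinatorial statement that $\sim_r$ is preserved by clique sums; this is not circular, since Theorem~\ref{thm:degree_general} is established independently of the present proposition. The two points you flag as delicate both check out. First, $G_\ab$ is indeed the clique sum of the vertex-replications $H_i=(G_i)_{\ab|_{V(G_i)}}$ along the complete graph $C'$ replicating $C$, with each $H_i$ induced in $G_\ab$ (completeness of $C$ is what makes each $G_i$ induced in $G$, which also gives the inequality ``$\ge$'' via Proposition~\ref{faceprop}). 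Second, the lifting step is sound: since a move with color set $S$ leaves the set of $S$-colored vertices invariant and both colorings are injective on the clique $V(C')$, the induced partial injection of $S$ extends to a permutation $\tau$ of $S$, and postcomposing $\Psi|_{H_2}$ with $\tau$ (identity off $S$) keeps $H_2$ properly colored, matches $\psi$ on $V(C')$, and moves only colors in $S$, so the lifted move is still $\le r$-colored; every edge of $H$ lies in $H_1$ or $H_2$, so properness is preserved globally. The closing bookkeeping (the residual permutation $\sigma$ fixes every color occurring on $V(C')$, hence is a product of transpositions realizable as $\sim_2$ moves supported on $V(H_1)$) is also correct, though under the paper's convention of identifying colorings that differ by a permutation of colors it could simply be absorbed. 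What your approach buys is a proof that is self-contained within this paper's framework; what the citation buys is brevity.
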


\begin{Lemma}
\label{block_lemma}
    Let $G$ be a graph whose blocks are $H_1, \dots, H_s$.
    Then one has 
    $$\omega(I_{\Mc_G})= \max \{\omega(I_{\Mc_{H_1}}),\dots, \omega(I_{\Mc_{H_s}}) \}.$$
\end{Lemma}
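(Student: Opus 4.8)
The plan is to reduce the statement about $\omega(I_{\Mc_G})$ to the already-established behavior of stable set ideals under clique sums (Proposition \ref{stable_sum}), using the identification $\Mc_G = \Sc_{L(G)}$. The first step is to recall the structure of blocks: if $G$ has blocks $H_1,\dots,H_s$, then $G$ is obtained from the $H_i$ by repeatedly gluing along cut vertices, i.e.\ along single vertices. The key combinatorial observation is that gluing two graphs $G'$ and $G''$ at a single common vertex $v$ translates, under the line graph operation, into gluing $L(G')$ and $L(G'')$ along a clique: the edges of $G'$ incident to $v$ and the edges of $G''$ incident to $v$ become two sets of vertices in $L(G)$, and within $L(G)$ the edges incident to $v$ from $G'$ together with those from $G''$ do \emph{not} become mutually adjacent (an edge of $G'$ at $v$ and an edge of $G''$ at $v$ share only the vertex $v$, hence they \emph{are} adjacent in $L(G)$ — wait, they do share a vertex, so they are adjacent). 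Let me restate: the set of all edges incident to $v$ in $G$ forms a clique in $L(G)$, and this clique is exactly where $L(G')$ and $L(G'')$ overlap. Thus $L(G)$ is a clique sum of $L(G')$ and $L(G'')$ along the clique corresponding to the edges at $v$.

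With that translation in hand, the proof proceeds by induction on $s$. For $s=1$ there is nothing to prove. For the inductive step, write $G$ as a clique sum (at a cut vertex) of two subgraphs $G'$ and $G''$, where $G'$ is one block $H_s$ and $G''$ is the union of the remaining blocks; then $L(G)$ is the clique sum of $L(G')$ and $L(G'')$ along the clique of edges incident to the relevant cut vertex. Applying Proposition \ref{stable_sum} to this clique sum gives
\[
\omega(I_{\Mc_G}) = \omega(I_{\Sc_{L(G)}}) = \max\{\omega(I_{\Sc_{L(G')}}),\,\omega(I_{\Sc_{L(G'')}})\} = \max\{\omega(I_{\Mc_{G'}}),\,\omega(I_{\Mc_{G''}})\},
\]
and the induction hypothesis applied to $G''$ (whose blocks are $H_1,\dots,H_{s-1}$) finishes the computation. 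One must be slightly careful that a block could be a single edge $K_2$ (a bridge of $G$); in that case $L(K_2)$ is a single vertex, $\Mc_{K_2}$ is a point, $I_{\Mc_{K_2}} = \{0\}$, and $\omega(I_{\Mc_{K_2}}) = 2$ by the convention fixed in Section \ref{sect:pre}, so it contributes nothing to the maximum and the formula still holds.

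The main obstacle I expect is making the clique-sum identification for line graphs fully rigorous, in particular confirming that when $G = G' \cup G''$ with $V(G') \cap V(G'') = \{v\}$, the intersection $L(G') \cap L(G'')$ inside $L(G)$ is \emph{precisely} the clique on the edge-set $\{e \in E(G) : v \in e\}$, and that every edge of $L(G)$ lies in $L(G')$ or in $L(G'')$ (two edges of $G$ that are adjacent in $L(G)$ share a vertex $w$; if $w \neq v$ then both edges lie in the same piece, and if $w = v$ they lie in the common clique, which is contained in both pieces). This case check is routine but needs to be spelled out so that Proposition \ref{stable_sum} genuinely applies. Once that is settled, everything else is a short induction. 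An alternative, even more direct route would be to observe that the blocks of $G$ are exactly the connected components of the "block decomposition," apply Proposition \ref{stable_sum} in one shot to the iterated clique sum $L(G) = L(H_1) \ast \cdots \ast L(H_s)$, and note that Proposition \ref{stable_sum} extends to clique sums of more than two graphs as already remarked in the excerpt; I would likely present the inductive version for clarity.
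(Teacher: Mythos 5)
Your overall strategy (pass to $L(G)$ and invoke Proposition \ref{stable_sum}) is the right one and matches the paper's, but the central identification is wrong: $L(G)$ is \emph{not} a clique sum of $L(G_1)$ and $L(G_2)$ when $G$ is obtained by gluing $G_1$ and $G_2$ at a vertex $v$. Since $G_1$ and $G_2$ share no edges, the vertex sets $E(G_1)$ and $E(G_2)$ of $L(G_1)$ and $L(G_2)$ are \emph{disjoint}, so the claim that "$L(G_1)\cap L(G_2)$ inside $L(G)$ is precisely the clique on $\{e : v\in e\}$" is false --- that intersection is empty. Moreover $L(G)\neq L(G_1)\cup L(G_2)$: every $e_1\in E_1=\{e\in E(G_1): v\in e\}$ is adjacent in $L(G)$ to every $e_2\in E_2=\{e\in E(G_2): v\in e\}$, and none of these cross edges lies in $L(G_1)$ or $L(G_2)$. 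You actually notice this adjacency mid-argument ("wait, they do share a vertex") but then draw the wrong structural conclusion, so Proposition \ref{stable_sum} does not apply as you have set it up.

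The repair is to insert a third summand: let $G'$ be the star at $v$ with edge set $E_1\cup E_2$, so $L(G')$ is a complete graph containing both cliques $E_1$ and $E_2$ and all the cross edges. Then $L(G)$ is an iterated clique sum of $L(G_1)$, $L(G')$, and $L(G_2)$, glued along the cliques $E_1$ and $E_2$ respectively, and two applications of Proposition \ref{stable_sum} give
$\omega(I_{\Mc_G})=\max\{\omega(I_{\Mc_{G_1}}),\omega(I_{\Sc_{L(G')}}),\omega(I_{\Mc_{G_2}})\}$. Since $L(G')$ is complete, $\Sc_{L(G')}$ is a simplex, $I_{\Sc_{L(G')}}=\{0\}$, and by the convention $\omega(\{0\})=2$ this extra term never affects the maximum. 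This is exactly what the paper does; your induction on the number of blocks and your remark about $K_2$-blocks are fine once this step is corrected.
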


\begin{proof}
Since $G$ is a $1$-clique sum of $H_1, \dots, H_s$, it is enough to show that
$$\omega(I_{\Mc_G})= \max \{\omega(I_{\Mc_{G_1}}), \omega(I_{\Mc_{G_2}}) \}$$
if $G$ is a $1$-clique sum of $G_1$ and $G_2$
at a vertex $v$.
Let $E_i = \{e \in E(G_i) : v \in e\}$ for $i=1,2$.
It then follows that $L(G)$ is a clique sum of $L(G_1), L(G_2)$ and the complete graph $L(G')$
along cliques $E_1$ and $E_2$,
where $G'$ is the graph whose edge set is $E_1 \cup E_2$.
Note that $I_{\Mc_{G'}} = I_{\Sc_{L(G')}} =\{0\}$.
Then
$$\omega(I_{\Mc_G})= \max \{\omega(I_{\Mc_{G_1}}), \omega(I_{\Mc_{G_2}}), \omega(I_{\Mc_{G'}})\}
= \max \{\omega(I_{\Mc_{G_1}}), \omega(I_{\Mc_{G_2}}) \}$$
from Proposition~\ref{stable_sum}.
\end{proof}

As a generalization of Theorem \ref{thm:degree} we give a bound of $\omega(I_{\Mc_G})$ for a line perfect graph $G$.
In fact,
\begin{Theorem}\label{thm:bound_line_perfect}
Let $G$ be a line perfect graph. Then one has $\omega(I_{\Mc_G}) \leq 3$.
\end{Theorem}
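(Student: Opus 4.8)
The plan is to reduce the statement to the bipartite case already covered by Theorem~\ref{thm:bip_upper} by exploiting the block decomposition and the clique-sum formula. By Proposition~\ref{lineperfect}, every block of a line perfect graph $G$ is either a bipartite graph, $K_4$, or $K_{1,1,n}$ for some $n$. By Lemma~\ref{block_lemma}, $\omega(I_{\Mc_G})$ equals the maximum of $\omega(I_{\Mc_{H}})$ over the blocks $H$ of $G$, so it suffices to show $\omega(I_{\Mc_H}) \le 3$ for each of the three types of block. For a bipartite block this is exactly Theorem~\ref{thm:bip_upper}, so the real content is handling $H = K_4$ and $H = K_{1,1,n}$.

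For $H = K_4$, the matching polytope $\Mc_{K_4}$ is a fixed low-dimensional polytope ($K_4$ has six edges and three perfect matchings, and a handful of smaller matchings), so $\omega(I_{\Mc_{K_4}})$ can be computed directly — I would either identify $\Sc_{L(K_4)}$ explicitly (note $L(K_4)$ is the octahedron $K_{2,2,2}$) and invoke a known description of its toric ideal, or simply check by hand (or cite a computer algebra computation) that its toric ideal is generated in degree $\le 3$. For $H = K_{1,1,n}$, I would argue more structurally: writing the two ``apex'' vertices as $u,v$ (joined to each other and to each of the $n$ degree-two vertices $w_1,\dots,w_n$), a matching of $K_{1,1,n}$ either uses the edge $\{u,v\}$ (and then at most... nothing else, since $u,v$ are saturated) or uses at most one edge at $u$ and at most one edge at $v$, i.e.\ is $\{u w_i\}$, $\{v w_j\}$, or $\{u w_i, v w_j\}$ with $i \ne j$, or a sub-matching thereof. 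Thus $\Mc_{K_{1,1,n}}$ has a very transparent vertex set, and I would show $\omega(I_{\Mc_{K_{1,1,n}}}) \le 3$ directly — one natural route is to set up an explicit square-free (or at least low-degree) Gr\"obner basis, or to recognize $\Mc_{K_{1,1,n}}$ (equivalently $\Sc_{L(K_{1,1,n})}$) as unimodularly equivalent to a well-understood polytope whose toric ideal is known to be generated in degree $\le 3$.

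The main obstacle is the $K_{1,1,n}$ family, since $n$ is unbounded and one cannot just appeal to a finite computation. The cleanest way around this is probably the vertex-coloring reformulation: by Theorem~\ref{thm:degree_general}, $\omega(I_{\Mc_{K_{1,1,n}}}) = \omega(I_{\Sc_{L(K_{1,1,n})}}) \le 3$ is equivalent to the statement that, for every vertex-replication graph of $L(K_{1,1,n})$ and every pair of proper $k$-vertex-colorings $f,g$ of it, one has $f \sim_3 g$. Since $L(K_{1,1,n})$ has a simple structure — it is the graph on the $2n+1$ edges of $K_{1,1,n}$ in which the $n$ edges at $u$ form a clique, the $n$ edges at $v$ form a clique, the edge $\{u,v\}$ is adjacent to everything, and $uw_i$ is adjacent to $vw_j$ exactly when $i=j$ — one can try to prove such a recoloring statement combinatorially, moving colors between the two big cliques through the ``apex'' edge using $2$- and $3$-colored switchings. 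I would also double-check whether $\omega(I_{\Mc_{K_{1,1,n}}})$ might actually be $2$ for all $n$ (which would make the bound even easier), but the safe target is $\le 3$. Finally, I would remark that this theorem generalizes Theorem~\ref{thm:bip_upper} since bipartite graphs are line perfect, and sets up the characterization of the quadratic case in Theorem~\ref{thm:bip_quad}.
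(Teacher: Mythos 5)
Your overall skeleton is the same as the paper's: reduce via Proposition~\ref{lineperfect} and Lemma~\ref{block_lemma} to the three block types, and dispose of the bipartite blocks by Theorem~\ref{thm:bip_upper}. The $K_4$ case is also fine as you set it up --- it is a single finite polytope, so a direct or computational verification is legitimate (the paper instead observes that $L(K_4)$ has no induced path on $4$ vertices, hence is perfectly orderable, and cites a result giving $\omega(I_{\Sc_{L(K_4)}})=2$; that is cleaner but not essentially different).

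The genuine gap is the $K_{1,1,n}$ family, which you correctly flag as the main obstacle but then only sketch three possible strategies for (a Gr\"obner basis, a unimodular equivalence, or a recoloring argument via Theorem~\ref{thm:degree_general}) without carrying any of them out. None of these is needed, and the key is already sitting in your own description of the matchings: since the two apex vertices $u,v$ are adjacent to every other vertex, the \emph{only} matching of $K_{1,1,n}$ containing the edge $e=\{u,v\}$ is the singleton $\{e\}$. Consequently the variable $x_{\{e\}}$ maps to a monomial involving a coordinate $t_e$ that occurs in the image of no other variable, so $x_{\{e\}}$ cannot occur in any irreducible binomial of the toric ideal; the matchings of $K_{1,1,n}$ other than $\{e\}$ are exactly the matchings of $K_{2,n}$, so $I_{\Mc_{K_{1,1,n}}}$ and $I_{\Mc_{K_{2,n}}}$ have the same set of generators and $\omega(I_{\Mc_{K_{1,1,n}}})=\omega(I_{\Mc_{K_{2,n}}})\le 3$ by Theorem~\ref{thm:bip_upper}. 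That one observation closes the argument; the recoloring route you gesture at would also work but is unnecessary machinery here. (As a side remark, your hope that $\omega(I_{\Mc_{K_{1,1,n}}})$ might be $2$ fails for $n\ge 3$, since $K_{1,1,n}$ then contains $K_{2,3}$ as a subgraph and $\omega(I_{\Mc_{K_{2,3}}})=3$.)
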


\begin{proof}
From Proposition \ref{lineperfect} and Lemma \ref{block_lemma},
it is enough to show that $\omega(I_{\Mc_G}) \leq 3$ if $G$ is a bipartite graph,
$K_4$, or $K_{1,1,n}$.

If $G$ is a bipartite graph, then $\omega(I_{\Mc_G}) \leq 3$ by Theorem \ref{thm:bip_upper}.
If $G=K_{1,1,n}$, then $G$ is obtained from $K_{2,n}$ by adding a new edge $e$.
Then
$\{e\}$ is a unique matching of $K_{1,1,n}$ that contains $e$.
Hence $I_{\Mc_G}$ and $I_{\Mc_{K_{2,n}}}$ have the same set of generators.
Thus we have $\omega(I_{\Mc_G}) = \omega(I_{\Mc_{K_{2,n}}}) \leq 3$.
Let $G=K_4$.
Then $L(G)$ has no induced path with 4 vertices.
Hence it is trivial that $L(G)$ is perfectly orderable, that is, there exists a linear order
$<$ on $V(L(G))$ such that no induced path with vertices $a,b,c,d$ and edges
$\{a,b\},\{ b,c\}, \{c,d\}$ satisfies $a< b$ and $d<c$.
It is known \cite[Theorem 3.1]{OhsugiShibataTsuchiya2023} that $\omega(I_{\Sc_{G'}}) = 2$ if $G'$
is perfectly orderable.
Thus $\omega(I_{\Mc_G}) = \omega(I_{\Sc_{L(G)}}) = 2$.
\end{proof}

Combining this theorem and Theorem \ref{thm:degree} we can obtain the following corollary.
\begin{Corollary}
    Let $G$ be a multigraph whose underlying simple graph is line perfect.
Then for any $k$-edge-colorings $f$ and $g$ of $G$, one has $f \sim_3 g$.
\end{Corollary}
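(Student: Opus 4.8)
The plan is to reduce the statement about a multigraph $G$ to the case of an edge-replication multigraph of a simple graph, and then invoke Theorem~\ref{thm:bound_line_perfect} together with (the multigraph version of) Theorem~\ref{thm:degree}. First I would observe that since $G$ is a multigraph with no loops, writing $\ab=(a_1,\dots,a_n)$ for the multiplicity vector of the edges of the underlying simple graph $\overline{G}$ (so $a_i$ is the number of parallel copies of $e_i$), we have exactly $G=\overline{G}^{(e)}_{\ab}$. By hypothesis $\overline{G}$ is line perfect, so Theorem~\ref{thm:bound_line_perfect} gives $\omega(I_{\Mc_{\overline{G}}})\le 3$.

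Next I would apply Theorem~\ref{thm:degree} with $r=3$ to the simple graph $\overline{G}$: the inequality $\omega(I_{\Mc_{\overline{G}}})\le 3$ is equivalent to the assertion that for \emph{every} $\ab\in\ZZ_{\ge 0}^n$ and every pair of $k$-edge-colorings $f,g$ of $\overline{G}^{(e)}_{\ab}$ one has $f\sim_3 g$. Specializing this to our particular multiplicity vector $\ab$, and using $G=\overline{G}^{(e)}_{\ab}$, we conclude that for any $k$-edge-colorings $f$ and $g$ of $G$ one has $f\sim_3 g$, which is exactly the claim.

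The only point that requires a word of care — and the one I would expect to be the main (minor) obstacle — is matching up the definitions of ``$f\sim_3 g$'' and ``differ by an $m$-colored subgraph'' for edge-colorings of the abstract multigraph $G$ with the same notions for edge-colorings of $\overline{G}^{(e)}_{\ab}$ as they are used in Theorem~\ref{thm:degree}. This is purely bookkeeping: an identification of the edge set of $G$ with that of $\overline{G}^{(e)}_{\ab}$ carries a $k$-edge-coloring of one to a $k$-edge-coloring of the other, and under this identification $M^{(e)}(f,j)\ne M^{(e)}(g,j)$ holds for $G$ exactly when it holds for $\overline{G}^{(e)}_{\ab}$, so the relation $\sim_3$ is preserved. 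With that identification in hand the two theorems combine directly and the proof is complete.
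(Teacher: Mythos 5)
Your proposal is correct and is exactly the argument the paper intends: the paper derives this corollary in one line by combining Theorem~\ref{thm:bound_line_perfect} with the ``only if'' direction of Theorem~\ref{thm:degree}, using precisely the identification $G=\overline{G}^{(e)}_{\ab}$ that you spell out. The bookkeeping point you flag is harmless, and your write-up is if anything more careful than the paper's.
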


Next, we characterize when $\omega(I_{\Mc_G})=2$ for a line perfect graph.
Bertschi introduced a hereditary class of perfect graphs in \cite{Bert}.
An \textit{even pair} in a graph $G$ is a pair of non-adjacent vertices of $G$ such that the length of all induced paths between them is even.
Contracting a pair of vertices $\{x,y\}$ in a graph $G$ means removing $x$ and $y$ and adding a new vertex $z$ with edges to every neighborhood of $x$ or $y$.
A graph $G$ is called \textit{even-contractile} if there exists a sequence $G_0,\ldots,G_k$ of graphs satisfying the following:
\begin{enumerate}[(i)]
    \item $G=G_0$;
    \item each $G_i$ is obtained from $G_{i-1}$ by contracting an even pair of $G_{i-1}$;
    \item $G_k$ is a complete graph.
\end{enumerate}
A graph $G$ is called \textit{perfectly contractile} if every induced subgraph of $G$ is even-contractile.
Every perfectly contractile graph is perfect.
In contrast to the strong perfect graph theorem, 
a forbidden graph characterization of perfectly contractile graphs is still open. However, there is a conjecture of this problem.
An \textit{odd prism} is a graph consisting of two disjoint triangles with three disjoint induced paths of odd length between them.
Everett and Reed conjectured that a graph $G$ is perfectly contractile if and only if $G$ contains no odd holes, no antiholes and no odd prisms as induced subgraphs. 
On the other hand, the third and fourth authors and Shibata gave the following conjecture. 
\begin{Conjecture}[{\cite[Conjecture 0.2]{OhsugiShibataTsuchiya2023}}]
\label{conj:second} Let $G$ be a perfect graph. 
Then the following conditions are equivalent{\rm :}
\begin{itemize}
    \item[\rm (i)] $G$ is perfectly contractile;
  \item[\rm (ii)] $G$ contains no odd holes, no antiholes and no odd prisms;
 \item[\rm (iii)] $\omega(I_{\Sc_G})=2$.
\end{itemize}
\end{Conjecture}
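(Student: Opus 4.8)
This statement is a conjecture rather than a theorem, and a complete proof appears to lie beyond current reach: the implication (ii) $\Rightarrow$ (i) is exactly the Everett--Reed conjecture, which is open. The plan is therefore to isolate that classical obstruction and reduce everything else to two algebraic lemmas, establishing the cycle (i) $\Rightarrow$ (iii) $\Rightarrow$ (ii) $\Rightarrow$ (i); once these three arrows are in place, all three conditions are equivalent. Throughout I would use that $G$ is perfect, so by the strong perfect graph theorem \cite{strongperfect} it already excludes odd holes and odd antiholes. Hence for such $G$ the content of (ii) is carried entirely by the remaining forbidden induced subgraphs, chiefly the odd prisms (note that the smallest odd prism $K_3 \times K_2$ is isomorphic to the antihole $\overline{C_6}$, so the prism condition and the antihole condition overlap here).

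For (iii) $\Rightarrow$ (ii) I would argue by forbidden induced subgraphs. If $P$ is an induced subgraph of $G$, then $\Sc_P$ is a face of $\Sc_G$, so by Proposition \ref{faceprop} we have $\omega(I_{\Sc_P}) \le \omega(I_{\Sc_G})$. Thus it suffices to verify $\omega(I_{\Sc_P}) \ge 3$ for each forbidden structure $P$ appearing in (ii) that survives perfection; taking contrapositives then yields (iii) $\Rightarrow$ (ii). By the coloring criterion of Theorem \ref{thm:degree_general}, proving $\omega(I_{\Sc_P}) \ge 3$ amounts to exhibiting, on some replication $P_\ab$, two $k$-vertex-colorings $f$ and $g$ of the same color-multiset type that are not Kempe equivalent, i.e.\ $f \not\sim_2 g$. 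I would first treat the triangular prism by writing down an explicit such pair, and then extend to a general odd prism, either by a direct coloring construction or by checking that lengthening each of the three paths by an even number of subdivisions preserves the non-equivalence.

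For (i) $\Rightarrow$ (iii) the technical heart is an \emph{even-pair contraction lemma}: if $G'$ is obtained from $G$ by contracting an even pair, then $\omega(I_{\Sc_G}) \le \max\{2, \omega(I_{\Sc_{G'}})\}$. Granting this, I would invoke the definition of perfectly contractile directly: every induced subgraph of $G$ admits an even-contractile sequence ending at a complete graph, and $I_{\Sc_{K_m}} = \{0\}$ gives $\omega = 2$ at the end of each sequence, so inducting backwards along the sequence forces $\omega(I_{\Sc_G}) = 2$ (this is consistent with, and generalizes, the perfectly orderable case \cite[Theorem 3.1]{OhsugiShibataTsuchiya2023}). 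To prove the lemma itself I would pass to the coloring side via Theorem \ref{thm:degree_general} and exploit the defining property of an even pair $\{x,y\}$ --- that all induced $x$--$y$ paths have even length --- which is precisely the condition allowing Kempe ($\sim_2$) switches to force $x$ and $y$ into a common color class in any proper coloring; realizing the contraction at the level of colorings should then lift a $\sim_2$ certificate for $G'$ to one for $G$. The subtle point, and where I expect the real work to lie, is that in a replication $G_\ab$ the vertices $x,y$ become cliques $G^{(x)}, G^{(y)}$, so the even-pair property must be reinterpreted for the blown-up graph and Kempe chains through these cliques must be controlled.

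The decisive obstacle is the final arrow (ii) $\Rightarrow$ (i), which is the Everett--Reed conjecture and is genuinely open; the reduction above does not circumvent it. The only evident way to close the cycle unconditionally would be to prove (ii) $\Rightarrow$ (iii) directly --- that every perfect graph with no odd prism (and no forbidden antihole) has a quadratically generated stable set ideal --- presumably through a structural decomposition theorem for this class in the spirit of the block and clique-sum reductions of Lemma \ref{block_lemma} and Proposition \ref{stable_sum}. Absent such a structure theorem, the honest status of the plan is that (i) $\Rightarrow$ (iii) $\Rightarrow$ (ii) is the attainable part, and the full equivalence remains conditional on Everett--Reed.
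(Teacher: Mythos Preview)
Your assessment is accurate: the paper does not prove this statement, it merely records it as Conjecture~\ref{conj:second} from \cite{OhsugiShibataTsuchiya2023}. There is no proof in the paper to compare against. You are right that the arrow (ii) $\Rightarrow$ (i) is exactly the Everett--Reed conjecture and is the genuine obstruction.

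A few remarks on the partial implications you outline. Your argument for (iii) $\Rightarrow$ (ii) matches what is already known: the paper cites \cite[Theorem~1.7]{OhsugiShibataTsuchiya2023} for precisely this implication (used in the proof of Theorem~\ref{line perfectcly contractile} in the translated form $\omega(I_{\Sc_{L(G)}})=2 \Rightarrow L(G)$ has no odd prism), so that direction is settled and your forbidden-subgraph plan is the right one. For (i) $\Rightarrow$ (iii), your even-pair contraction lemma is a natural strategy; note however that the paper never asserts this implication in full generality, and the only unconditional instance it invokes is the dart-free case (Proposition~\ref{thm:app}), which goes through (ii) rather than through (i). So the status you describe---two arrows plausibly within reach, one arrow equivalent to Everett--Reed---is exactly how the paper itself treats the conjecture.
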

A graph $G$ is called \textit{line perfectly contractile} if its line graph $L(G)$ is perfectly contractile. 
If Conjecture \ref{conj:second} 
is true for a line perfect graph $G$, $\omega(I_{\Mc_G})=2$ if and only if $G$ is line perfectly contractile.
We show that this claim is true by proving the following theorem which 
implies Theorem \ref{thm:bip_quad}.
An {\it odd subdivision} of a graph $G$ is a graph obtained by replacing each edge of $G$ 
by a path of odd length. Note that $G$ itself is an odd subdivision of $G$.

\begin{Theorem} \label{line perfectcly contractile}
    Let $G$ be a line perfect graph. 
    Then the following conditions are equivalent{\em :}
    \begin{itemize}
        \item[{\rm (i)}]
        $\omega(I_{\Mc_G})=2${\rm ;} 
        \item[{\rm (ii)}]
        $G$ is line perfectly contractile{\rm ;} 
        
        \item[{\rm (iii)}]
        $L(G)$ has no odd prisms{\rm ;}
        \item[{\rm (iv)}]
        $G$ has no odd subdivision of $K_{2,3}$ as a subgraph{\rm ;} 
        \item[{\rm (v)}]
         each block of $G$ is either a bipartite graph having no odd subdivision of $K_{2,3}$ as a subgraph, $K_3$, $K_4$, or $K_{1,1,2}$.
    \end{itemize}
    Otherwise, $\omega(I_{\Mc_G})=3$.
\end{Theorem}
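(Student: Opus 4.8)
The plan is to establish the five conditions equivalent through the chain
\[
\text{(i)} \Rightarrow \text{(iv)} \Leftrightarrow \text{(v)} \Rightarrow \text{(i)}, \qquad \text{(v)} \Rightarrow \text{(ii)} \Rightarrow \text{(iii)} \Rightarrow \text{(iv)},
\]
and then to read off the last sentence from Theorem~\ref{thm:bound_line_perfect}: since $\omega(I_{\Mc_G}) \geq 2$ always and $\omega(I_{\Mc_G}) \leq 3$ when $G$ is line perfect, as soon as we know that (i) fails precisely when (iv) fails, the only possibility in that case is $\omega(I_{\Mc_G}) = 3$. (In particular this confirms Conjecture~\ref{conj:second} for line perfect graphs.)

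The equivalence (iv)$\Leftrightarrow$(v) is purely combinatorial. An odd subdivision of $K_{2,3}$ is $2$-connected, hence is contained in a single block of $G$; conversely each of $K_3$, $K_{1,1,2}$, $K_4$ has at most four vertices and so contains no subdivision of $K_{2,3}$, while $K_{1,1,n}$ with $n\geq 3$ contains $K_{2,3}$ as a subgraph. Feeding this into the block description of line perfect graphs in Proposition~\ref{lineperfect} gives (iv)$\Leftrightarrow$(v). For (iii)$\Leftrightarrow$(iv) the key observations are that every induced subgraph of $L(G)$ is the line graph of a subgraph of $G$, and that the line graph of an odd subdivision $H$ of $K_{2,3}$ is an odd prism: the two degree-$3$ hubs of $H$ give the two disjoint triangles, and each hub-to-hub path of $H$ --- of even length, being a concatenation of two odd paths --- gives an induced path of odd length between the triangles. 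For the converse, Whitney's theorem recovers $H$ from an odd prism sitting inside $L(G)$, the exceptional pair $\{K_3,K_{1,3}\}$ being ruled out because $G$ is line perfect. Finally, (ii)$\Rightarrow$(iii) is the elementary half of the Everett--Reed picture: an odd prism is not even-contractile, so a perfectly contractile graph contains none as an induced subgraph.

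For the remaining implications we pass to blocks via Lemma~\ref{block_lemma}. For (i)$\Rightarrow$(iv) we argue contrapositively: if $G$ contains an odd subdivision $H$ of $K_{2,3}$, then by Proposition~\ref{subgraph prop} it suffices to prove $\omega(I_{\Mc_H})\geq 3$, which we do by exhibiting an explicit cubic minimal generator of $I_{\Mc_H}$ --- equivalently, via Theorem~\ref{thm:degree}, two proper edge colorings of a suitable edge-replication multigraph of $H$ not linked by Kempe switchings on pairs of color classes (for $H=K_{2,3}$ this is the familiar cubic obstruction, and for a general odd subdivision the same relation is carried through the subdivided paths). For (v)$\Rightarrow$(i), Lemma~\ref{block_lemma} reduces us to the three block types: if the block is $K_3$, then $\Sc_{L(K_3)}$ is a simplex and $I_{\Mc_{K_3}}=\{0\}$; if it is $K_4$ or $K_{1,1,2}$, then its line graph has no induced path on four vertices, hence is perfectly orderable, so $\omega=2$ by \cite[Theorem~3.1]{OhsugiShibataTsuchiya2023}; the remaining, and decisive, case is a bipartite block $G$ with no odd subdivision of $K_{2,3}$, for which one must sharpen Theorem~\ref{thm:bip_upper} from $\omega(I_{\Mc_G})\leq 3$ to $\omega(I_{\Mc_G})\leq 2$. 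By Theorem~\ref{thm:degree} this is the statement that any two proper edge colorings of any bipartite edge-replication multigraph of $G$ are related by Kempe switchings on pairs of color classes, and we would prove it by induction on the number of colors and of edges, using the hypothesis --- reformulated as the absence of two vertices joined by three internally disjoint even paths --- to route the required Kempe chains. Lastly, for (v)$\Rightarrow$(ii) we give, for each admissible block type, an explicit sequence of even-pair contractions of its line graph terminating in a complete graph, and conclude that $L(G)$, being a clique sum of perfectly contractile graphs along cliques, is itself perfectly contractile.

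The main obstacle is the bipartite case of (v)$\Rightarrow$(i): converting the combinatorial hypothesis ``no two vertices of $G$ are joined by three internally disjoint even paths'' into the assertion that all proper edge colorings of every bipartite edge-replication multigraph of $G$ lie in a single Kempe class under $2$-colored switchings. A secondary difficulty, more a matter of careful bookkeeping than of substance, is the construction of the explicit even-pair contraction sequences needed for (v)$\Rightarrow$(ii) and the use of Whitney's theorem in the presence of triangles for the converse half of (iii)$\Leftrightarrow$(iv); in both, the line perfect hypothesis must be invoked to eliminate the exceptional configurations.
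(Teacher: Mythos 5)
There is a genuine gap. Your outer combinatorial skeleton is sound and matches the paper's: (iv)$\Leftrightarrow$(v) via Proposition~\ref{lineperfect}, (iii)$\Leftrightarrow$(iv) via the fact that the line graph of an odd subdivision of $K_{2,3}$ is an odd prism, and the closing clause ``otherwise $\omega(I_{\Mc_G})=3$'' from Theorem~\ref{thm:bound_line_perfect} together with $\omega\geq 2$. But the implications that carry the actual mathematical content are left as plans rather than proofs. For (v)$\Rightarrow$(i) you acknowledge that the decisive case is a bipartite block with no odd subdivision of $K_{2,3}$, and you propose to sharpen Theorem~\ref{thm:bip_upper} to $\omega\leq 2$ ``by induction on the number of colors and of edges, routing the required Kempe chains.'' No such induction is given, and this is not a routine step: it amounts to proving that all proper edge colorings of every edge-replication multigraph of such a bipartite graph form a single class under $2$-colored switchings, which is precisely the hard theorem the paper imports as Proposition~\ref{thm:app} (a result of \cite{OhsugiTsuchiya2023Kempe} on dart-free graphs). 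Similarly, for (v)$\Rightarrow$(ii) you propose ``explicit even-pair contraction sequences for each admissible block type,'' but the bipartite block type is an infinite family, so no finite explicit list can work, and your auxiliary claim that a clique sum of perfectly contractile graphs is perfectly contractile is asserted without justification. Your (i)$\Rightarrow$(iv) via an explicit cubic minimal generator of $I_{\Mc_{K_{2,3}}}$ is at least a finite verification (and propagates to odd subdivisions by the face argument in Section~\ref{sect:non-perfect}), but it too is only sketched.

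What your proposal misses is the reduction that makes the paper's proof short: for a line perfect graph $G$, the line graph $L(G)$ is claw-free (hence dart-free), perfect (hence has no odd holes and no odd antiholes), and, because line graphs exclude the specific $5$-vertex graph $H$ whose complement is the disjoint union of an edge and a $3$-vertex path, $L(G)$ has no antihole of length $\geq 7$; since an antihole of length $6$ is an odd prism, the three conditions ``no odd holes, no antiholes, no odd prisms'' collapse to the single condition (iii). After this observation, (ii)$\Leftrightarrow$(iii) is Proposition~\ref{dartprop}, (iii)$\Rightarrow$(i) is Proposition~\ref{thm:app}, and (i)$\Rightarrow$(iii) is \cite[Theorem~1.7]{OhsugiShibataTsuchiya2023}, with no block decomposition or new Kempe-chain argument needed. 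Unless you either invoke these results or actually supply the induction for the bipartite case, your argument for (v)$\Rightarrow$(i) and (v)$\Rightarrow$(ii) does not close.
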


In order to prove this theorem, we recall the following results.
\begin{Proposition}[\cite{dartfree}]
\label{dartprop}
A dart-free graph is perfectly contractile if and only if it contains no odd holes, no antiholes and no odd prisms as induced subgraphs.
\end{Proposition}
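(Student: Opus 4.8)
The statement is the restriction of the Everett--Reed conjecture (condition (ii) of Conjecture \ref{conj:second}) to the hereditary class of dart-free graphs, the \emph{dart} being the five-vertex graph obtained from the diamond $K_4-e$ by attaching a pendant vertex to one of its two degree-three vertices. The forward implication is the easy one: the class of perfectly contractile graphs is hereditary by definition, so it suffices to observe that none of an odd hole, an antihole, or an odd prism is even-contractile. Indeed it is well known that each of these is non-complete yet contains no even pair---transparently so for an odd hole $C_{2k+1}$, where the two induced paths joining any non-adjacent pair have lengths summing to the odd number $2k+1$, so one is even and one is odd---whence no even-pair contraction can even be started, and a perfectly contractile graph can therefore contain none of them as an induced subgraph.

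The content is the reverse implication, and the natural engine is the following standard meta-principle. If $\mathcal{C}$ is a class of graphs closed under taking induced subgraphs and under contracting even pairs, and if every non-complete member of $\mathcal{C}$ has an even pair, then every graph in $\mathcal{C}$ is perfectly contractile: given $G\in\mathcal{C}$ and an induced subgraph $H$ (again in $\mathcal{C}$ by heredity), either $H$ is complete or it has an even pair whose contraction lands in $\mathcal{C}$ with one fewer vertex, so induction on $|V(H)|$ exhibits $H$ as even-contractile. The plan is to apply this with $\mathcal{C}$ the class of dart-free graphs containing no odd hole, no antihole and no odd prism---note that $\mathcal{C}$ consists of perfect graphs by the strong perfect graph theorem---and to verify its two hypotheses.

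For closure under even-pair contraction I would argue by pull-back. Contracting an even pair preserves perfection (a theorem of Fonlupt and Uhry), which already forbids the appearance of new odd holes and odd antiholes; it then remains to show that contracting an even pair $\{x,y\}$ to a vertex $z$ cannot create an induced dart, even antihole, or odd prism. If such a forbidden $F$ appeared it would have to use $z$ (otherwise $F$ already sits in $G$); expanding $z$ back to the non-adjacent pair $\{x,y\}$ and exploiting the even-pair condition, one checks the few ways $F$ can lift and in each case forces either a forbidden induced subgraph already present in $G$ or an odd induced $x$--$y$ path, contradicting evenness. This is a finite, if tedious, case analysis over the three shapes of $F$.

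The second hypothesis---that every non-complete graph in $\mathcal{C}$ has an even pair---is the crux and the main obstacle. First I would dispose of clique cutsets: if a clique $K$ separates $G$, then for two vertices on the same side the parities of their induced paths are unchanged by the opposite side, since an induced path cannot contain two non-consecutive vertices of the clique $K$ and so cannot detour through the far part; hence an even pair of a proper ``leaf'' piece is an even pair of $G$, and induction reduces everything to clique-cutset-free \emph{atoms}. For the atoms one needs a genuine structure theorem for clique-cutset-free dart-free perfect graphs with no antihole and no odd prism, showing that they fall into a short list of basic types (e.g. bipartite-type graphs, line graphs of bipartite graphs, and a few sporadic families), and then one exhibits an even pair explicitly in each type. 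Producing and justifying this classification, and locating the even pair in every basic case, is where essentially all the difficulty lies: excluding the dart is precisely what keeps the atoms structurally tractable, while the odd-hole, antihole and odd-prism exclusions remove exactly the minimal obstructions to the existence of an even pair.
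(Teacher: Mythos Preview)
The paper does not prove this proposition: it is quoted from the literature (the bracketed citation \cite{dartfree}) and used as a black box in the proof of Theorem~\ref{line perfectcly contractile}. There is therefore no ``paper's own proof'' to compare your attempt against.

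As for the attempt itself, what you have written is a reasonable high-level roadmap of how the published proof goes, but it is not a proof. You correctly identify the meta-principle (hereditary class closed under even-pair contraction in which every non-complete member has an even pair) and you correctly isolate the two obligations. However, both obligations are left essentially undone: the ``finite, if tedious, case analysis'' for closure under contraction is not carried out, and---as you yourself concede---the existence of an even pair in every non-complete atom rests on a structure theorem you neither state nor prove. The actual argument in the cited reference is substantial and does not reduce to a short list of ``basic types'' in the way you suggest; exhibiting the even pair requires real work specific to the dart-free setting. So your write-up is a fair summary of the strategy, but the gap between it and a proof is precisely the content of the cited paper.
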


\begin{Proposition}[{\cite[Theorem 1.5 (a)]{OhsugiTsuchiya2023Kempe}}]

\label{thm:app}
Let $G$ be a dart-free graph with no odd holes, no antiholes, and no odd prisms.
Then one has $\omega(I_{\Sc_G})=2$.
\end{Proposition}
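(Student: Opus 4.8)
The plan is to combine the reduction to Kempe equivalence furnished by Theorem~\ref{thm:degree_general} with the structure theory of perfectly contractile graphs. Since $\omega(I_{\Sc_G}) \geq 2$ always holds, it suffices to prove $\omega(I_{\Sc_G}) \leq 2$. Applying Theorem~\ref{thm:degree_general} with $r = 2$, together with the observation of Section~\ref{sect:method} that $f \sim_2 g$ is precisely Kempe equivalence, the claim reduces to the following combinatorial statement: for every $\ab \in \ZZ_{\geq 0}^d$, any two $k$-vertex-colorings of the vertex-replication graph $G_\ab$ are Kempe equivalent. I would establish this in two parts, namely (b) every $G_\ab$ again lies in our class and hence, by Proposition~\ref{dartprop}, is perfectly contractile, and (c) in any perfectly contractile graph all $k$-colorings are Kempe equivalent.

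For part (b) the decisive point is that a clique blow-up creates only \emph{true twins}: two vertices lying in a common clique $G^{(i)}$ of $G_\ab$ are adjacent and share the same neighborhood outside $G^{(i)}$. None of the forbidden graphs (the dart, odd holes, antiholes, odd prisms) contains a pair of true twins, which one verifies case by case. Consequently, were some forbidden graph $X$ to occur as an induced subgraph of $G_\ab$, no two of its vertices could lie in the same clique, since such a pair would be true twins of $X$; projecting each vertex of the copy to its original vertex in $G$ then yields an injective, adjacency-preserving map, so $G$ itself would contain an induced $X$, contradicting the hypothesis. Hence $G_\ab$ is dart-free with no odd holes, antiholes, or odd prisms, and Proposition~\ref{dartprop} makes it perfectly contractile.

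For part (c) the engine is an even-pair Kempe lemma: if $\{x,y\}$ is an even pair colored $i \neq j$ in some $k$-coloring, a single Kempe switch recolors $y$ with $i$, giving $x$ and $y$ a common color. Indeed, a shortest path joining $x$ and $y$ in the subgraph induced by the two color classes $i$ and $j$ is an induced path of $G$ alternating the two colors, hence of odd length; this is impossible for an even pair, so $x$ and $y$ must lie in distinct components of that subgraph, and switching the component of $y$ achieves the goal. I would then induct on the number of vertices of a perfectly contractile graph $H$, the base case $H = K_m$ being handled directly (with $k \geq m$ colors one can transpose colors of two vertices and relocate a vertex to an unused color, which suffices to connect all colorings). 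Otherwise $H$ admits an even pair $\{x,y\}$; given colorings $f$ and $g$, the lemma lets one Kempe-switch both so that $x$ and $y$ receive a common color, after which they descend to colorings of the contraction $H/xy$, to which the inductive hypothesis applies.

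The main obstacle is the final bookkeeping in part (c): lifting a Kempe switch of the contraction $H/xy$ back to $H$. A switch of $H/xy$ avoiding the merged vertex lifts verbatim, but a switch touching the merged vertex corresponds to switching the (at most two) components of the analogous two-colored subgraph of $H$ that contain $x$ and $y$; realizing this as a bounded sequence of Kempe switches in $H$, keeping every intermediate coloring proper and ensuring $x$ and $y$ again end with equal colors, is the delicate step. One must also confirm that contracting an even pair preserves perfect contractibility, so that the induction remains within the class; this I would draw from the theory of even-contractile graphs.
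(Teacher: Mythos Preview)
The paper does not prove this proposition: it is quoted from \cite{OhsugiTsuchiya2023Kempe} and invoked as a black box in the proof of Theorem~\ref{line perfectcly contractile}. There is therefore nothing in the present paper to compare your argument against.

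That said, your outline is a correct reconstruction of how such a statement is proved, and it mirrors the strategy of the cited reference. The reduction via Theorem~\ref{thm:degree_general} to Kempe equivalence on every $G_\ab$ is exactly right, and the true-twin argument in (b) is the standard device for showing that vertex replication preserves each of the forbidden-subgraph hypotheses (dart, odd hole, antihole, odd prism all lack a pair of adjacent vertices with equal closed neighbourhoods), so that $G_\ab$ is again perfectly contractile by Proposition~\ref{dartprop}.

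The one point to sharpen is the induction in (c). You induct on $|V(H)|$ over all perfectly contractile $H$, which forces you to know that $H/xy$ is again perfectly contractile; as you flag yourself, this is not immediate from the definition, and appealing vaguely to ``the theory of even-contractile graphs'' is not a proof. The fix is simple and sidesteps the issue entirely: since $H$ is in particular even-contractile, fix once and for all a contraction sequence $H=H_0,H_1,\dots,H_m$ with $H_m$ complete and each $H_{i+1}$ an even-pair contraction of $H_i$, and induct along this specific sequence rather than over the whole class. The base case is a clique, where any two $k$-colorings differ by a permutation of colors realizable as a product of Kempe transpositions. For the inductive step your lifting argument goes through: a Kempe switch on a component of the two-coloured subgraph of $H_{i+1}$ avoiding the merged vertex lifts verbatim, while one touching it lifts to switching the (at most two) components of the corresponding two-coloured subgraph of $H_i$ containing $x$ and $y$, each a legitimate Kempe switch in $H_i$. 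This removes any need to know that $H/xy$ is perfectly contractile.
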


Now, we give a proof of Theorem \ref{line perfectcly contractile}.

\begin{proof}[Proof of Theorem~\ref{line perfectcly contractile}]
From \cite[Theorem 1.7]{OhsugiShibataTsuchiya2023}, (i) $\Rightarrow$ (iii) holds for any perfect graph.

Let $G$ be a line perfect graph. 
Since $L(G)$ is perfect, it has no odd holes and odd antiholes.
In general, the line graph of a graph is dart-free (since claw-free) and 
has no graph $H$ below as an induced subgraph.
\begin{figure}[h]
\label{zu1}
    \centering
    \begin{tikzpicture}[dot/.style={circle,fill=black,minimum size=4pt,inner sep=0pt,outer sep=0pt},
                    line/.style={thick}]
    \node[dot] (A) at (0,0) {};
    \node[dot] (B) at (1,0) {};
    \node[dot] (C) at (0.5,0.8) {};
    \node[dot] (D) at (0.5,0.3) {};
    \node[dot] (E) at (0.5,-0.5) {};

    \draw[line] (A) -- (D) -- (B)--(E);
    \draw[line] (A) -- (E);
    \draw[line] (B) -- (C);
        \draw[line] (C) -- (A);
    \draw[line] (D) -- (E);
\end{tikzpicture}
\end{figure}

\noindent
Since the complement of $H$ is the disjoint union of an edge and a path with 3 vertices,
it follows that $L(G)$ has no antiholes of length $\ge 7$.
Note that an antihole of length $6$ is an odd prism.
Hence from Proposition~\ref{dartprop},
we have (ii) $\Leftrightarrow$ (iii).
Moreover, from Proposition~\ref{thm:app}, we have
    (iii) $\Rightarrow$ (i).
    It is known that the line graph of an odd subdivision of $K_{2,3}$ is an odd prism.
    (A subdivision of $K_{2,3}$ is called {\it theta}.)
    Thus we have (iii) $\Leftrightarrow$ (iv).
Finally, (iv) $\Leftrightarrow$ (v) follows from Proposition~\ref{lineperfect}.    
\end{proof}

\begin{Example}
    Let $G$ be an outerplanar bipartite graph.
    It is known that $G$ has no $K_{2,3}$ as a minor.
    Hence the toric ring of the matching polytope of $G$ is quadratic.
\end{Example}

\section{A bound on $\omega(I_{\Mc_G})$ for a general graph}
\label{sect:non-perfect}

In this section, we consider $I_{\Mc_G}$ for a general graph $G$.
First, we see examples of graphs $G$ with $\omega(I_{\Mc_G}) = 4$ by using Macaulay2 \cite{M2}.
\begin{Example}\label{exam:deg4}
    {\rm (1)
Let $G_1$ be the graph as follows:
\begin{figure}[H]
\centering
		\begin{tikzpicture}[dot/.style={circle,fill=black,minimum size=4pt,inner sep=0pt,outer sep=0pt},
                    line/.style={thick}]
\node[dot] (1) at (-1,1) {};	
\node[dot] (2) at (0,2) {}; 
\node[dot] (4) at (1,1) {}; 
\node[dot] (3) at (0,0) {}; 
\node[dot] (5) at (0,1) {}; 

\draw[thick] (1)--(2);
\draw[thick] (2)--(4);
\draw[thick] (3)--(4);
\draw[thick] (3)--(1);
\draw[thick] (1)--(5);
\draw[thick] (2)--(5);
\draw[thick] (3)--(5);
\draw[thick] (4)--(5);
\node at (-2,2) {$G_1$};
\end{tikzpicture}
\end{figure}
Using Macaulay2, one has $\omega(I_{\Mc_{G_1}})=4$.
It then follows from Theorem \ref{thm:degree} that 
there exist $\ab \in \ZZ_{\geq 0}^8$ and $4$-edge-colorings $f$ and $g$ of $(G_1)^{(e)}_{\ab}$ such that $f \not\sim_3 g$.
In fact, for the following two $4$-edge-colorings $f$ and $g$ of $G_1$, one has $f \not\sim_3 g$.
\begin{figure}[H]
\centering
	\begin{tikzpicture}[dot/.style={circle,fill=black,minimum size=4pt,inner sep=0pt,outer sep=0pt},
                    line/.style={thick}]
\node[dot] (1) at (-1,1) {};	
\node[dot] (2) at (0,2) {}; 
\node[dot] (4) at (1,1) {}; 
\node[dot] (3) at (0,0) {}; 
\node[dot] (5) at (0,1) {}; 

\draw[thick,red] (1)--(2);
\draw[thick,yellow] (2)--(4);
\draw[thick,blue] (3)--(4);
\draw[thick,green] (3)--(1);
\draw[thick,blue] (1)--(5);
\draw[thick,green] (2)--(5);
\draw[thick,yellow] (3)--(5);
\draw[thick,red] (4)--(5);
\node at (-1,2) {$f$};
\end{tikzpicture}
\ \ \ \ \ \ \ \ 
	\begin{tikzpicture}[dot/.style={circle,fill=black,minimum size=4pt,inner sep=0pt,outer sep=0pt},
                    line/.style={thick}]
\node[dot] (1) at (-1,1) {};	
\node[dot] (2) at (0,2) {}; 
\node[dot] (4) at (1,1) {}; 
\node[dot] (3) at (0,0) {}; 
\node[dot] (5) at (0,1) {}; 

\draw[thick,red] (1)--(2);
\draw[thick,yellow] (2)--(4);
\draw[thick,blue] (3)--(4);
\draw[thick,green] (3)--(1);
\draw[thick,yellow] (1)--(5);
\draw[thick,blue] (2)--(5);
\draw[thick,red] (3)--(5);
\draw[thick,green] (4)--(5);
\node at (-1,2) {$g$};
\end{tikzpicture}
\end{figure}

Indeed, for any three (resp. two) colors, the subgraph consisting of all edges with the colors has a unique $3$-edge-coloring (resp. $2$-edge-coloring) up to permuting colors.
This implies $f \not\sim_3 g$.

(2) Let $G_2$ be the graph as follows: 
\begin{figure}[H]
\centering
	\begin{tikzpicture}[dot/.style={circle,fill=black,minimum size=4pt,inner sep=0pt,outer sep=0pt},
                    line/.style={thick}]
\node[dot] (1) at (0,2) {};	
\node[dot] (2) at (-1,1) {}; 
\node[dot] (3) at (1,1) {}; 
\node[dot] (4) at (-2,0) {}; 
\node[dot] (5) at (0,0) {}; 
\node[dot] (6) at (2,0) {};

\draw[thick] (1)--(2);
\draw[thick] (1)--(3);
\draw[thick] (2)--(3);
\draw[thick] (2)--(4);
\draw[thick] (2)--(5);
\draw[thick] (3)--(6);
\draw[thick] (3)--(5);
\draw[thick] (4)--(5);
\draw[thick] (5)--(6);
\node at (-1,2) {$G_2$};
\end{tikzpicture}
\end{figure}
Using Macaulay2, one has $\omega(I_{\Mc_{G_2}})=4$. Moreover, for the following two $4$-edge-colorings $f$ and $g$ of $G_2$, one has $f \not\sim_3 g$.
\begin{figure}[H]
\centering
	\begin{tikzpicture}[dot/.style={circle,fill=black,minimum size=4pt,inner sep=0pt,outer sep=0pt},
                    line/.style={thick}]
\node[dot] (1) at (0,2) {};	
\node[dot] (2) at (-1,1) {}; 
\node[dot] (3) at (1,1) {}; 
\node[dot] (4) at (-2,0) {}; 
\node[dot] (5) at (0,0) {}; 
\node[dot] (6) at (2,0) {};

\draw[thick,yellow] (1)--(2);
\draw[thick,blue] (1)--(3);
\draw[thick,red] (2)--(3);
\draw[thick,green] (2)--(4);
\draw[thick,blue] (2)--(5);
\draw[thick,yellow] (3)--(6);
\draw[thick,green] (3)--(5);
\draw[thick,yellow] (4)--(5);
\draw[thick,red] (5)--(6);
\node at (-1,2) {$f$};
\end{tikzpicture}\ \ \ \ \ \ \ \ 
	\begin{tikzpicture}[dot/.style={circle,fill=black,minimum size=4pt,inner sep=0pt,outer sep=0pt},
                    line/.style={thick}]
\node[dot] (1) at (0,2) {};	
\node[dot] (2) at (-1,1) {}; 
\node[dot] (3) at (1,1) {}; 
\node[dot] (4) at (-2,0) {}; 
\node[dot] (5) at (0,0) {}; 
\node[dot] (6) at (2,0) {};

\draw[thick,green] (1)--(2);
\draw[thick,yellow] (1)--(3);
\draw[thick,red] (2)--(3);
\draw[thick,yellow] (2)--(4);
\draw[thick,blue] (2)--(5);
\draw[thick,blue] (3)--(6);
\draw[thick,green] (3)--(5);
\draw[thick,red] (4)--(5);
\draw[thick,yellow] (5)--(6);
\node at (-1,2) {$g$};
\end{tikzpicture}
\end{figure}

(3)
Let $G_3$ be the graph as follows: 
\begin{figure}[H]
\centering
	\begin{tikzpicture}[dot/.style={circle,fill=black,minimum size=4pt,inner sep=0pt,outer sep=0pt},
                    line/.style={thick}]
\node[dot] (1) at (0,2) {};	
\node[dot] (2) at (-1,1) {}; 
\node[dot] (3) at (1,1) {}; 
\node[dot] (4) at (-2,0) {}; 
\node[dot] (51) at (-1,0) {}; 
\node[dot] (52) at (1,0) {}; 
\node[dot] (6) at (2,0) {};

\draw[thick] (1)--(2);
\draw[thick] (1)--(3);
\draw[thick] (2)--(3);
\draw[thick] (2)--(4);
\draw[thick] (2)--(51);
\draw[thick] (3)--(6);
\draw[thick] (3)--(52);
\draw[thick] (4)--(51);
\draw[thick] (52)--(6);
\draw[thick] (51)--(52);
\node at (-1,2) {$G_3$};
\end{tikzpicture}
\end{figure}
Using Macaulay2, one has $\omega(I_{\Mc_{G_3}})=4$. Moreover, for the following two $4$-edge-colorings $f$ and $g$ of an edge-replication multigraph of $G_3$, one has $f \not\sim_3 g$.
Note that for any two $4$-edge-colorings $f_1,f_2$ of $G_3$, it follows that $f_1 \sim_3 f_2$. In particular, this example shows that computing $\omega(I_{\mathcal M_G})$ requires considering not only $G$ itself but also edge-replication multigraphs of $G$.
\begin{figure}[H]
\centering
	\begin{tikzpicture}[dot/.style={circle,fill=black,minimum size=4pt,inner sep=0pt,outer sep=0pt},
                    line/.style={thick}]
\node[dot] (1) at (0,2) {};	
\node[dot] (2) at (-1,1) {}; 
\node[dot] (3) at (1,1) {}; 
\node[dot] (4) at (-2,0) {}; 
\node[dot] (51) at (-1,0) {}; 
\node[dot] (52) at (1,0) {}; 
\node[dot] (6) at (2,0) {};

\draw[thick,yellow] (1)--(2);
\draw[thick,green] (1)--(3);
\draw[thick,red] (2)--(3);
\draw[thick,green] (2)--(4);
\draw[thick,blue] (2)--(51);
\draw[thick,yellow] (3)--(6);
\draw[thick,blue] (3)--(52);
\draw[thick,red] (4)--(51);
\draw[thick,red] (52)--(6);
\draw[thick,yellow, bend right] (51) to (52);
\draw[thick,green, bend left] (51) to (52);
\node at (-1,2) {$f$};
\end{tikzpicture}\ \ \ \ \ \ \ \ 
	\begin{tikzpicture}[dot/.style={circle,fill=black,minimum size=4pt,inner sep=0pt,outer sep=0pt},
                    line/.style={thick}]
\node[dot] (1) at (0,2) {};	
\node[dot] (2) at (-1,1) {}; 
\node[dot] (3) at (1,1) {}; 
\node[dot] (4) at (-2,0) {}; 
\node[dot] (51) at (-1,0) {}; 
\node[dot] (52) at (1,0) {}; 
\node[dot] (6) at (2,0) {};

\draw[thick,red] (1)--(2);
\draw[thick,green] (1)--(3);
\draw[thick,yellow] (2)--(3);
\draw[thick,blue] (2)--(4);
\draw[thick,green] (2)--(51);
\draw[thick,blue] (3)--(6);
\draw[thick,red] (3)--(52);
\draw[thick,red] (4)--(51);
\draw[thick,green] (52)--(6);
\draw[thick,yellow, bend right] (51) to (52);
\draw[thick,blue, bend left] (51) to (52);
\node at (-1,2) {$g$};
\end{tikzpicture}
\end{figure}
(4) Let $G_4,G_5,\ldots,G_8$ be the graphs as follows:
\begin{figure}[h]
\centering
\begin{tikzpicture}[dot/.style={circle,fill=black,minimum size=4pt,inner sep=0pt,outer sep=0pt},
                    line/.style={thick}]
\node[dot] (1) at (0,1) {};	
\node[dot] (2) at (-1,0) {}; 
\node[dot] (3) at (-1,-1) {}; 
\node[dot] (4) at (0,-2) {}; 
\node[dot] (5) at (1,-1) {};
\node[dot] (6) at (1,0) {};
\node[dot] (7) at (0,-0.5) {};

\draw[thick] (1)--(2)--(3)--(4)--(5)--(6)--(1);
\draw[thick] (2)--(7)--(5);
\draw[thick] (3)--(7)--(6);
\node at (-2,1) {$G_4$};
\end{tikzpicture}
	\begin{tikzpicture}[dot/.style={circle,fill=black,minimum size=4pt,inner sep=0pt,outer sep=0pt},
                    line/.style={thick}]
\node[dot] (1) at (0,1) {};	
\node[dot] (2) at (-1,0) {}; 
\node[dot] (3) at (-1,-1) {}; 
\node[dot] (4) at (0,-2) {}; 
\node[dot] (5) at (1,-1) {};
\node[dot] (6) at (1,0) {};
\node[dot] (7) at (0,-0.5) {};

\draw[thick] (1)--(2)--(3)--(4)--(5)--(6)--(1);
\draw[thick] (2)--(6);
\draw[thick] (3)--(5);
\draw[thick] (1)--(7)--(4);
\node at (-2,1) {$G_5$};
\end{tikzpicture}
	\begin{tikzpicture}[dot/.style={circle,fill=black,minimum size=4pt,inner sep=0pt,outer sep=0pt},
                    line/.style={thick}]
\node[dot] (1) at (0,1) {};	
\node[dot] (2) at (-1,0) {}; 
\node[dot] (3) at (-1,-1) {}; 
\node[dot] (4) at (0,-2) {}; 
\node[dot] (5) at (1,-1) {};
\node[dot] (6) at (1,0) {};
\node[dot] (7) at (0,-0.5) {};

\draw[thick] (1)--(2)--(3)--(4)--(5)--(6)--(1);
\draw[thick, bend left] (2) to (5);
\draw[thick, bend left] (3) to (6);
\draw[thick] (1)--(7)--(4);
\node at (-2,1) {$G_6$};
\end{tikzpicture}
\\ \ \\
\begin{tikzpicture}[dot/.style={circle,fill=black,minimum size=4pt,inner sep=0pt,outer sep=0pt},
                    line/.style={thick}]
\node[dot] (1) at (-1,1) {};	
\node[dot] (2) at (0,2) {}; 
\node[dot] (4) at (1,1) {}; 
\node[dot] (3) at (0,0) {}; 
\node[dot] (5) at (0,1) {}; 
\node[dot] (6) at (-0.3,1) {}; 
\node[dot] (7) at (-0.6,1) {};

\draw[thick] (1)--(2);
\draw[thick] (2)--(4);
\draw[thick] (3)--(4);
\draw[thick] (3)--(1);
\draw[thick] (1)--(5);
\draw[thick] (2)--(5);
\draw[thick] (3)--(5);
\draw[thick] (4)--(5);
\node at (-2,2) {$G_7$};
\end{tikzpicture}
\begin{tikzpicture}[dot/.style={circle,fill=black,minimum size=4pt,inner sep=0pt,outer sep=0pt},
                    line/.style={thick}]
\node[dot] (1) at (-1,1) {};	
\node[dot] (2) at (0,2) {}; 
\node[dot] (4) at (1,1) {}; 
\node[dot] (3) at (0,0) {}; 
\node[dot] (5) at (0,1) {}; 
\node[dot] (6) at (-0.7,1.3) {}; 
\node[dot] (7) at (-0.4,1.6) {};

\draw[thick] (1)--(2);
\draw[thick] (2)--(4);
\draw[thick] (3)--(4);
\draw[thick] (3)--(1);
\draw[thick] (1)--(5);
\draw[thick] (2)--(5);
\draw[thick] (3)--(5);
\draw[thick] (4)--(5);
\node at (-2,2) {$G_8$};
\end{tikzpicture}
\end{figure}

Using Macaulay2, one has $\omega(I_{\Mc_G})=4$ when $G \in \{G_4,\ldots,G_8\}$. We notice that the graph $G_7$ and $G_8$ are odd subdivisions of $G_1$.
}
\end{Example}
Proposition \ref{subgraph prop} says that,
for a graph $G$ and a subgraph $G'$ of $G$, one has $\omega(I_{\Mc_{G'}}) \leq \omega(I_{\Mc_G})$
since $\Mc_{G'}$ is a face of $\Mc_G$. 
In general, if $G'$ is an odd subdivision of a subgraph of $G$, we can obtain the same inequality.

\begin{Proposition}
Let $G$ be a graph, and let $G'$ be an odd subdivision of $G$.
Then $\Mc_{G'}$ has a face that is isomorphic to $\Mc_{G}$.
In particular, we have $\omega(I_{\Mc_{G}}) \leq \omega(I_{\Mc_{G'}})$.
\end{Proposition}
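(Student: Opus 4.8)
The plan is to realise $\Mc_G$ as a face of $\Mc_{G'}$ and then to write down an explicit affine isomorphism onto that face. For each edge $e=\{u,v\}$ of $G$, let $P_e\colon u=u^e_0,u^e_1,\dots,u^e_{2k_e+1}=v$ be the path of odd length $2k_e+1$ in $G'$ that replaces $e$, with edges $a^e_j=\{u^e_{j-1},u^e_j\}$ for $1\le j\le 2k_e+1$; these $a^e_j$ are precisely the edges of $G'$, and for $k_e=0$ the path is $e$ itself. The path $P_e$ has exactly two matchings saturating all of its internal vertices $u^e_1,\dots,u^e_{2k_e}$: the ``odd'' one $A_e=\{a^e_1,a^e_3,\dots,a^e_{2k_e+1}\}$, which is a perfect matching of $P_e$ and covers both $u$ and $v$, and the ``even'' one $B_e=\{a^e_2,a^e_4,\dots,a^e_{2k_e}\}$, which covers neither. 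For a matching $M$ of $G$ I would set $\widehat M:=\bigl(\bigcup_{e\in M}A_e\bigr)\cup\bigl(\bigcup_{e\notin M}B_e\bigr)$; a short check shows $\widehat M$ is a matching of $G'$, since every internal vertex of every $P_e$ is covered inside its own path, while a vertex $u\in V(G)$ is covered by $\widehat M$ exactly when the edge of $M$ incident to $u$ (there is at most one) contributes its $A$-matching.

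Next I would let $F$ be the subset of $\Mc_{G'}$ obtained by turning the matching-polytope vertex inequality $\sum_{f\ni w}x_f\le 1$ into an equality for every vertex $w\in V(G')\setminus V(G)$. Each such inequality is valid on $\Mc_{G'}$, so each equality cuts out a face, and hence $F$, being an intersection of faces of a polytope, is itself a face (nonempty, as it contains $\rho(\widehat M)$ for $M=\emptyset$). The technical heart of the argument is to identify the vertices of $F$. Since $F$ is a face of the $(0,1)$-polytope $\Mc_{G'}$, its vertices lie among the $\rho(M')$ with $M'\in M(G')$, and $\rho(M')\in F$ iff $M'$ saturates every vertex of $V(G')\setminus V(G)$. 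The observation I need is that such an $M'$ must restrict to $A_e$ or $B_e$ on each $P_e$: tracing $P_e$ from $u$, if $a^e_1\in M'$ the saturation conditions force $M'\cap P_e=A_e$, while if $a^e_1\notin M'$ they force $M'\cap P_e=B_e$. Then $M:=\{e:M'\cap P_e=A_e\}$ is a matching of $G$ (two $A$-paths meeting at a vertex of $G$ would cover it twice, contradicting that $M'$ is a matching) and $M'=\widehat M$, so the vertices of $F$ are exactly the $\rho(\widehat M)$ with $M\in M(G)$.

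Finally I would define the affine map $\phi\colon\RR^{E(G)}\to\RR^{E(G')}$ by $\phi(y)_{a^e_j}=y_e$ for odd $j$ and $\phi(y)_{a^e_j}=1-y_e$ for even $j$; then $\phi(\rho(M))=\rho(\widehat M)$ for every matching $M$ of $G$, so $\phi(\Mc_G)=\conv\{\rho(\widehat M):M\in M(G)\}=F$. The linear map $\psi\colon\RR^{E(G')}\to\RR^{E(G)}$ given by $\psi(z)_e=z_{a^e_1}$ satisfies $\psi\circ\phi=\mathrm{id}$, so $\phi$ is injective and restricts to an affine isomorphism $\Mc_G\xrightarrow{\ \sim\ }F$. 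Both $\phi$ and $\psi$ have integer coefficients and match the lattice points of $\Mc_G$ (the $\rho(M)$) bijectively with those of $F$ (the $\rho(\widehat M)$); here I use that both polytopes are $(0,1)$-polytopes, so their lattice points equal their $(0,1)$-points, which by the previous step are exactly these vertices. Hence $F$ is isomorphic to $\Mc_G$ as a lattice polytope; in particular the relabelling $x_M\leftrightarrow x_{\widehat M}$ identifies $I_{\Mc_G}$ with $I_F$, so $\omega(I_{\Mc_G})=\omega(I_F)\le\omega(I_{\Mc_{G'}})$ by Proposition~\ref{faceprop}.

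The one step I expect to need genuine care is the vertex identification for $F$ — the combinatorial fact that a matching of $G'$ saturating every subdivision vertex ``alternates'' along each subdivided path; everything else (that $F$ is a face, that $\phi$ is an affine lattice isomorphism) is essentially bookkeeping. A convenient simplification would be to reduce at the outset to the case where $G'$ arises from $G$ by subdividing a single edge into a path of length~$3$, since every odd subdivision is a composition of such moves and a face of a face is a face; this trims the notation but not the essential content.
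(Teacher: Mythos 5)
Your proof is correct and follows essentially the same route as the paper: both cut out the face of $\Mc_{G'}$ by making the matching inequalities at the subdivision vertices tight, observe that a matching saturating those vertices must alternate along each subdivided path, and exhibit an explicit integral affine isomorphism between that face and $\Mc_G$. The only difference is presentational --- you treat the full odd subdivision at once and map $\Mc_G$ into $\RR^{E(G')}$, whereas the paper iterates single length-$3$ subdivisions and maps the face onto $\Mc_G\times\{(1,0)\}$ --- a reduction you yourself note is available.
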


\begin{proof}
Let $E(G)=\{e_1,\dots,e_n\}$ be the edge set of $G$.
Suppose that $G'$ is obtained from $G$ by replacing an edge
$e_n$ of $G$ by a path $P=(e_{n}',e_{n+1}',e_{n+2}')$ with three edges.
Let $F_i$ be a face of $\Mc_{G'}$ defined by $F_i = \Mc_{G'} \cap H_i$
where
\begin{eqnarray*}
    H_1 &=&
\{
x \in \RR^{n+2}  :  x_n + x_{n+1}=1
\},\\
    H_2 &=&
\{
x \in \RR^{n+2}  : x_{n+1} + x_{n+2} =1
\}.
\end{eqnarray*}
Let $F = F_1 \cap F_2$.
Then $F$ is a face of $\Mc_{G'}$ and each vertex $x$ of $F$ satisfies
$(x_n,x_{n+1},x_{n+2}) \in \{ (0,1,0), (1,0,1)\}$.
It is easy to see that
$(x_1,\dots,x_{n-1},0,1,0) \in \Mc_{G'}$
if and only if
$(x_1,\dots,x_{n-1},0) \in \Mc_{G}$.
If $(x_1,\dots,x_{n-1},1,0,1) \in \Mc_{G'}$, then
$x_i = 0$ if $e_i$ is adjacent to either $e_n'$ or $e_{n+2}'$ in $G'$.
Note that $e_i$ is adjacent to either $e_n'$ or $e_{n+2}'$ in $G'$
if and only if $e_i$ is adjacent to $e_n$ in $G$.
Thus, $(x_1,\dots,x_{n-1},1,0,1) \in \Mc_{G'}$
if and only if
$(x_1,\dots,x_{n-1},1) \in \Mc_{G}$.
Let $\varphi : \RR^{n+2} \rightarrow \RR^{n+2} $ be a linear transformation 
defined by $\varphi(x) = (x_1,\dots,x_n, x_{n+1} +x_n ,x_{n+2}-x_n)$.
Then $\varphi(F) = \Mc_G \times \{ (1,0) \} \cong \Mc_G $.

Since every odd subdivision is obtained from $G$
by replacing an edge
of $G$ by a path with three edges repeatedly,
we have a desired conclusion from this.
\end{proof}

From Propositions \ref{faceprop} and \ref{subgraph prop}, we have the following.

\begin{Corollary}
Suppose that a graph $G_0$ contains an odd subdivision $G'$ of a graph $G$ as a subgraph.
Then we have
$\omega(I_{\Mc_{G}}) \leq \omega(I_{\Mc_{G'}}) \le \omega(I_{\Mc_{G_0}})$.
\end{Corollary}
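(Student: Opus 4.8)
The plan is to deduce the chain of inequalities by simply composing two results that are already in hand, so no genuinely new argument is required. The statement merely packages together the two preceding facts: the Proposition asserting that an odd subdivision produces a face isomorphic to $\Mc_G$, and Proposition \ref{subgraph prop} comparing $\omega$ along subgraph inclusions.

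First I would apply the preceding Proposition to the pair $(G,G')$: since $G'$ is an odd subdivision of $G$, there is a face $F$ of $\Mc_{G'}$ with $F \cong \Mc_G$, realised by the linear transformation $\varphi$ appearing in that proof. Because $\varphi$ is unimodular, it induces a bijection between the lattice points of $F$ and those of $\Mc_G \times \{(1,0)\}$, hence an isomorphism of the associated graded rings after relabelling variables; in particular $\omega(I_{\Mc_G}) = \omega(I_F)$. Combining this with Proposition \ref{faceprop} applied to the face $F \subset \Mc_{G'}$ gives $\omega(I_{\Mc_G}) = \omega(I_F) \le \omega(I_{\Mc_{G'}})$, which is exactly the ``in particular'' clause of that Proposition. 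Next I would apply Proposition \ref{subgraph prop} to the inclusion of $G'$ as a subgraph of $G_0$, which immediately yields $\omega(I_{\Mc_{G'}}) \le \omega(I_{\Mc_{G_0}})$. Concatenating the two inequalities gives $\omega(I_{\Mc_{G}}) \leq \omega(I_{\Mc_{G'}}) \le \omega(I_{\Mc_{G_0}})$, as claimed.

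There is essentially no obstacle here; the only step that deserves a line of justification is the assertion that an isomorphic face transports the value of $\omega$, which reduces to observing that the explicit $\varphi$ used above is unimodular (it is triangular with $\pm 1$ on the diagonal), so that $\Mc_G$ and the face $F$ have literally the same toric ideal up to a permutation of the variables. With that remark, the proof is a two-line composition of Proposition \ref{faceprop}, the odd-subdivision Proposition, and Proposition \ref{subgraph prop}.
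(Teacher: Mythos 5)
Your proposal is correct and follows exactly the paper's route: the first inequality is the ``in particular'' clause of the odd-subdivision Proposition (itself a consequence of Proposition \ref{faceprop} applied to the unimodularly embedded face), and the second is Proposition \ref{subgraph prop} applied to $G' \subset G_0$. Your extra remark that the explicit $\varphi$ is unimodular, so the face carries literally the same toric ideal up to relabelling, is a correct and harmless elaboration of what the paper leaves implicit.
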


Hence we obtain the following from Proposition \ref{subgraph prop}.

\begin{Proposition}
    Let $G$ be a graph and let $G_1, \ldots, G_5$ and $G_6$ be the graphs as in Example \ref{exam:deg4}. 
    If $G$ contains an odd subdivision of $G_1, \ldots, G_5$ or $G_6$ as a subgraph, then one has $\omega(I_{\Mc_G}) \geq 4$.
\end{Proposition}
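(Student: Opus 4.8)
The plan is to read off the claim as an immediate consequence of the Corollary that precedes it, combined with the explicit Macaulay2 computations recorded in Example~\ref{exam:deg4}. To keep the bookkeeping clean I would first rename: write $H$ for the graph called $G$ in the statement of the Proposition, so that the symbols $G_1,\dots,G_6$ continue to denote the six ``seed'' graphs of Example~\ref{exam:deg4}.

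With that convention, fix an index $i\in\{1,\dots,6\}$ and suppose that $H$ contains, as a subgraph, an odd subdivision $G'$ of $G_i$. I would then apply the Corollary above with the ambient graph $G_0:=H$, with the base graph taken to be $G_i$, and with the odd subdivision $G'$; this gives the chain
\[
\omega(I_{\Mc_{G_i}})\;\le\;\omega(I_{\Mc_{G'}})\;\le\;\omega(I_{\Mc_{H}}).
\]
It remains only to quote Example~\ref{exam:deg4}, where the Macaulay2 computations yield $\omega(I_{\Mc_{G_i}})=4$ for every $i\in\{1,\dots,6\}$: parts (1)--(3) treat $G_1,G_2,G_3$, and part (4) treats $G_4,G_5,G_6$ (in fact all of $G_4,\dots,G_8$). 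Feeding this into the displayed inequality gives $\omega(I_{\Mc_{H}})\ge 4$, which is the assertion. (Note that this only yields the lower bound; the matching upper bound $\omega(I_{\Mc_H})\le 4$ is exactly the content of Conjecture~\ref{conj:non-perfect}.)

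There is no genuine obstacle in this proof: all the substantive work has already been done, namely the face construction for odd subdivisions in the preceding Proposition, the behaviour of $\omega$ under passage to faces in Propositions~\ref{faceprop} and~\ref{subgraph prop}, and the identification $\omega(I_{\Mc_{G_i}})=4$ for the seed graphs. The only thing to be careful about is the direction of the reduction — in the Corollary it is the \emph{seed} graph $G_i$ that plays the role of ``$G$'', and the graph $H$ in the Proposition that plays the role of the ambient ``$G_0$'' — and making sure that the value $4$ is indeed recorded in Example~\ref{exam:deg4} for each of $G_1,\dots,G_6$ and not merely for a subset of them.
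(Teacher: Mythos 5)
Your proof is correct and is exactly the paper's intended derivation: the paper states this Proposition as an immediate consequence of the preceding Corollary (the chain $\omega(I_{\Mc_{G_i}})\le\omega(I_{\Mc_{G'}})\le\omega(I_{\Mc_{G_0}})$) together with the computed values $\omega(I_{\Mc_{G_i}})=4$ from Example \ref{exam:deg4}. Your care about which graph plays the role of the seed versus the ambient graph, and your check that the value $4$ is recorded for all six seed graphs, are both on point.
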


Next, we consider Conjecture \ref{conj:non-perfect}.
Note that to solve the conjecture, it suffices to consider case of complete graphs from Proposition \ref{subgraph prop}. In other words, Conjecture \ref{conj:non-perfect} is equivalent to the following.
\begin{Conjecture}
    Let $K_d$ be a complete graph with $d$ vertices. Then one has $\omega(I_{\Mc_{K_d}}) \leq 4$.
\end{Conjecture}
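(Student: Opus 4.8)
The plan is to attack the equivalent combinatorial statement. By Theorem~\ref{thm:degree}, the bound $\omega(I_{\Mc_{K_d}}) \le 4$ holds if and only if, for every $\ab$ and every pair of proper $k$-edge-colorings $f,g$ of the complete multigraph $(K_d)^{(e)}_{\ab}$, one has $f \sim_4 g$. This reformulation is faithful: since each edge lies in exactly one color class, $\sum_\ell \rho(M(f,\ell)) = \ab = \sum_\ell \rho(M(g,\ell))$, so the binomial $\xb_f-\xb_g$ automatically lies in $I_{\Mc_{K_d}}$ whenever $f$ and $g$ are $k$-edge-colorings of the same multigraph. The cases $d \le 4$ are then immediate: $K_d$ is line perfect by Proposition~\ref{lineperfect}, so $\omega(I_{\Mc_{K_d}}) \le 3$ by Theorem~\ref{thm:bound_line_perfect}. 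Thus the real content is $d \ge 5$, where $K_d$ contains $C_5$ and is no longer line perfect; note that $G_1$ of Example~\ref{exam:deg4} is a subgraph of $K_5$, so by Proposition~\ref{subgraph prop} the conjecture is sharp, with the bound $4$ genuinely attained.

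For $d \ge 5$ I would argue by peeling off one color class at a time, using the bipartite theorem (Theorem~\ref{thm:bip_differ}) as the engine and reserving a single extra color to absorb odd structure. Given $f$ and $g$, the first goal is to transform $f$, using only moves supported on $\le 4$ colors, so that its last color class agrees with that of $g$; one then deletes this common matching and recurses on the resulting (no longer complete) multigraph with $k-1$ available colors. To make the induction close one would phrase the statement for all multigraphs on $d$ vertices, which is legitimate since by Proposition~\ref{subgraph prop} the complete graph is the hardest case. The symmetric difference of the two target matchings, together with alternating paths in the remaining color classes, decomposes into paths and even cycles on which bipartite-style $3$-color moves suffice, plus possibly odd cycles. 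It is the odd cycles that force the ``$+1$'': an odd discrepancy cannot be resolved by a $2$- or $3$-color swap and instead requires a genuine $4$-color rotation, exactly matching the necessity exhibited by $G_1,\dots,G_8$ of Example~\ref{exam:deg4}. A useful normalization is that an empty (or emptyable) color class acts as a free buffer; hence the extremal instances are the ``tight'' colorings with every class saturated, and for $K_{2m}$ these are precisely the $1$-factorizations, reducing the crux to showing that any two $1$-factorizations of a complete multigraph are $\sim_4$-equivalent.

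The hard part will be proving that $4$ colors \emph{always} suffice, i.e.\ that no configuration genuinely needs $5$; this coincides with the still-open conjecture of \cite{Asratian1998}, so a new idea is required rather than a routine adaptation of the bipartite argument. The most promising route I see is to decompose the symmetric difference of $f$ and $g$ by an Euler-tour / alternating-walk argument, exploiting the high edge-connectivity of a complete multigraph to route each odd discrepancy through a bounded rotation supported on at most four colors, and then to show that these local rotations can be scheduled into a valid sequence of $\le 4$-colored moves without interfering with one another. In parallel I would exploit the $S_d$-symmetry of $I_{\Mc_{K_d}}$ together with Macaulay2 (as in Example~\ref{exam:deg4}) to verify the bound for $d = 5,6,7$ and to read off the combinatorial shape of the degree-$4$ minimal generators; if these are all supported on a bounded number of vertices, then Proposition~\ref{subgraph prop}, together with the corollary on odd subdivisions, would let one lift a finite verification to all $d$.
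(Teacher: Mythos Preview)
This statement is a \emph{conjecture} in the paper, not a theorem: the paper offers no proof of it. What the paper does is exactly the first and last items of your plan --- the reformulation via Theorem~\ref{thm:degree}, the observation that $d\le 4$ is covered by line perfectness, the sharpness via $G_1\subset K_5$, and the Macaulay2 verification for $d\le 7$ --- and then leaves the general case open, noting the equivalence with Conjectures~\ref{conj:non-perfect} and~\ref{conj:non-perfect-matching} and with the conjecture of \cite{Asratian1998}. So there is no paper proof to compare against, and your write-up is (as you yourself signal) a research programme rather than a proof.

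The genuine gap is the one you name: the ``peel off one color class'' induction, with the bipartite Theorem~\ref{thm:bip_differ} as engine, runs directly into the open edge-coloring conjecture of \cite{Asratian1998}. Nothing in the outline explains how to route an odd discrepancy through a $4$-color rotation in general, and the high edge-connectivity of $K_d$ is not by itself enough to guarantee that such local rotations can be scheduled without interference; this is exactly where all known attempts stall.

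The lifting idea in your last sentence also does not close the problem. Knowing the combinatorial shape of the degree-$4$ minimal generators tells you nothing about the nonexistence of degree-$\ge 5$ ones. What would work is the stronger statement that \emph{every} minimal generator of $I_{\Mc_{K_d}}$ lies in $R[\Mc_{K_{V'}}]$ for some $|V'|\le N$ independent of $d$: then each such generator already belongs to $I_{\Mc_{K_{V'}}}$, hence is generated in degree $\le \omega(I_{\Mc_{K_N}})$, contradicting minimality unless its degree is $\le 4$. But there is no a priori bound of this kind --- a degree-$k$ binomial can involve matchings touching all $d$ vertices --- so establishing bounded support uniformly in $d$ and in the degree is not visibly easier than the conjecture itself. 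Proposition~\ref{subgraph prop} only gives $\omega(I_{\Mc_{K_N}})\le \omega(I_{\Mc_{K_d}})$, the wrong inequality for lifting.
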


For graphs with a small number of vertices, we consider Conjecture \ref{conj:non-perfect}.
Since $K_4$ is line perfect and has no $K_{2,3}$ as a subgraph, we have the following from Theorem \ref{line perfectcly contractile}.

\begin{Corollary}\label{d4}
     Let $G$ be a graph on $[d]$.
     If $d \leq 4$, then $\omega(I_{\Mc_G}) = 2$.
\end{Corollary}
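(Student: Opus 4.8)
The plan is to reduce everything to the single graph $K_4$ and then apply Theorem~\ref{line perfectcly contractile}. Since the graphs considered here are simple, any graph $G$ on the vertex set $[d]$ with $d \le 4$ is (literally, or after an obvious relabeling of vertices) a subgraph of $K_4$. Hence Proposition~\ref{subgraph prop} yields $\omega(I_{\Mc_G}) \le \omega(I_{\Mc_{K_4}})$, and it is enough to show $\omega(I_{\Mc_{K_4}}) = 2$.

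To evaluate $\omega(I_{\Mc_{K_4}})$ I would verify the hypotheses of Theorem~\ref{line perfectcly contractile} for $K_4$. First, $K_4$ is line perfect: by Proposition~\ref{lineperfect} it suffices that each block of $K_4$ be bipartite, $K_4$, or $K_{1,1,n}$, and the unique block of $K_4$ is $K_4$ itself. Second, $K_4$ contains no odd subdivision of $K_{2,3}$ as a subgraph, for the trivial reason that every odd subdivision of $K_{2,3}$ has at least $|V(K_{2,3})| = 5$ vertices while $K_4$ has only $4$. Thus condition (iv) of Theorem~\ref{line perfectcly contractile} holds for $K_4$ --- equivalently, condition (v) holds because the only block of $K_4$ is $K_4$ --- so $\omega(I_{\Mc_{K_4}}) = 2$. (Alternatively, this equality is already established inside the proof of Theorem~\ref{thm:bound_line_perfect} via the perfect orderability of $L(K_4)$.)

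Putting the two steps together gives $\omega(I_{\Mc_G}) \le 2$, and since $\omega(I_{\Pc}) \ge 2$ for every lattice polytope $\Pc$ (as recorded in Section~\ref{sect:pre}), we conclude $\omega(I_{\Mc_G}) = 2$. There is no real obstacle in this argument: the only points needing a word of justification are the containment $G \subseteq K_4$ and the vertex-count observation excluding an odd subdivision of $K_{2,3}$ from $K_4$, both of which are immediate.
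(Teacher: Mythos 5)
Your argument is correct and is essentially the paper's own: the paper derives the corollary from Theorem~\ref{line perfectcly contractile} by noting that $K_4$ is line perfect and contains no ($K_{2,3}$, hence no odd subdivision of $K_{2,3}$ as a) subgraph, combined with Proposition~\ref{subgraph prop} and the convention $\omega(I_\Pc)\ge 2$. Your extra observations (the vertex count $5>4$ and the alternative via perfect orderability of $L(K_4)$) are fine but add nothing beyond the paper's reasoning.
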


For graphs with $5$, $6$ and $7$ vertices, we verify Conjecture \ref{conj:non-perfect} through computational experiments. 
By using Macaulay2,
we can confirm that $\omega(I_{\Mc_{K_7}}) = 4$.
Hence, for graphs $G$ with $5$, $6$ and $7$ vertices,
one has $\omega(I_{\Mc_G}) \leq 4$. Additionally, we classify all graphs $G$ such that $\omega(I_{\Mc_G})=4$ by using both Macaulay2 and Nauty \cite{nauty}. 
    Let $G$ be a graph on $[d]$ and let $G_1,G_2,\ldots,G_8$ be the graphs as in Example \ref{exam:deg4}, respectively. Then one has
    \begin{enumerate}
        \item[{\rm (1)}] Assume $d = 5$. Then $\omega(I_{\Mc_G})=4$ if and only if $G$ contains $G_1$ as a subgraph. Otherwise, $\omega(I_{\Mc_G}) \leq 3$.
        \item[{\rm (2)}] Assume $d = 6$. Then $\omega(I_{\Mc_G})=4$ if and only if $G$ contains $G_1$ or $G_2$ as a subgraph. Otherwise, $\omega(I_{\Mc_G}) \leq 3$.
         \item[{\rm (3)}]  Assume $d = 7$. Then $\omega(I_{\Mc_G})=4$ if and only if $G$ contains one of $G_1,G_2,\ldots,G_{8}$ as a subgraph. Otherwise, $\omega(I_{\Mc_G}) \leq 3$.
    \end{enumerate}


Next, we consider Conjecture \ref{conj:non-perfect} for a class of graphs.
For $d \geq 4$, let $W_d$ be the graph on $[d]$ whose edge set is 
\[
\{ \{1,2\},\{2,3\},\ldots,\{d-2,d-1\},\{1,d-1\}\} \cup \{\{1,d\},\{2,d\},\ldots,\{d-1,d\}\}.
\]

\begin{figure}[h]
\centering
\begin{tikzpicture}[dot/.style={circle,fill=black,minimum size=4pt,inner sep=0pt,outer sep=0pt},
                    line/.style={thick}]
\node[dot] (1) at (0,1) {};	
\node[dot] (2) at (-1,-1) {}; 
\node[dot] (3) at (1,-1) {}; 
\node[dot] (4) at (0,0) {}; 

\draw[thick] (1)--(2);
\draw[thick] (1)--(3);
\draw[thick] (2)--(3);
\draw[thick] (1)--(4);
\draw[thick] (2)--(4);
\draw[thick] (3)--(4);
\node at (-2,1) {$W_4$};
\end{tikzpicture}
		\begin{tikzpicture}[dot/.style={circle,fill=black,minimum size=4pt,inner sep=0pt,outer sep=0pt},
                    line/.style={thick}]
\node[dot] (1) at (-1,1) {};	
\node[dot] (2) at (0,2) {}; 
\node[dot] (4) at (1,1) {}; 
\node[dot] (3) at (0,0) {}; 
\node[dot] (5) at (0,1) {}; 

\draw[thick] (1)--(2);
\draw[thick] (2)--(4);
\draw[thick] (3)--(4);
\draw[thick] (3)--(1);
\draw[thick] (1)--(5);
\draw[thick] (2)--(5);
\draw[thick] (3)--(5);
\draw[thick] (4)--(5);
\node at (-2,2) {$W_5$};
\end{tikzpicture}
\begin{tikzpicture}[dot/.style={circle,fill=black,minimum size=4pt,inner sep=0pt,outer sep=0pt},
                    line/.style={thick}]
\node[dot] (1) at (0,1) {};	
\node[dot] (2) at (-1,0) {}; 
\node[dot] (3) at (-0.5,-1) {}; 
\node[dot] (5) at (1,0) {}; 
\node[dot] (4) at (0.5,-1) {}; 
\node[dot] (6) at (0,0) {}; 
\draw[thick] (1)--(2);
\draw[thick] (2)--(3);
\draw[thick] (3)--(4);
\draw[thick] (4)--(5);
\draw[thick] (5)--(1);
\draw[thick] (1)--(6);
\draw[thick] (2)--(6);
\draw[thick] (3)--(6);
\draw[thick] (4)--(6);
\draw[thick] (5)--(6);

\node at (-2,1) {$W_6$};
\end{tikzpicture}
\end{figure}

The graph $W_d$ is called a \textit{wheel graph}. Note that $W_5$ is the graph $G_1$ as in Example \ref{exam:deg4}. 
If $d$ is odd, then $W_d$ contains an odd subdivision of $W_5$ as a subgraph.
In this case, one has $\omega(I_{\Mc_{W_d}}) \geq 4$.
From Corollary \ref{d4}, $\omega(I_{\Mc_{W_4}})=2$. Moreover, by using Macaulay2, we obtain $\omega(I_{\Mc_{W_6}})=2$.
This leads the following conjecture.
\begin{Conjecture}
    Let $d$ be an integer $\geq 4$. Then one has
    \[
\omega(I_{\Mc_{W_d}})=\begin{cases}
    2 & \mbox{if $d$ is even},\\
    4 & \mbox{if $d$ is odd}.
\end{cases}
    \]
\end{Conjecture}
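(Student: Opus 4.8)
The plan is to split according to the parity of $d$ and, in each case, to reduce the statement to one about edge-colorings of edge-replication multigraphs of $W_d$ via Theorem~\ref{thm:degree} (equivalently, about vertex-colorings of vertex-replications of $L(W_d)$ via Theorem~\ref{thm:degree_general}). The two lower bounds need no new work: if $d$ is odd, choosing four rim vertices that cut the (even-length) rim cycle $C_{d-1}$ into four arcs of odd length exhibits an odd subdivision of $W_5$ inside $W_d$, so $\omega(I_{\Mc_{W_d}})\ge\omega(I_{\Mc_{W_5}})=\omega(I_{\Mc_{G_1}})=4$ as observed above, using the value from Example~\ref{exam:deg4}; and $\omega(I_{\Mc_{W_d}})\ge 2$ always. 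Since $W_d$ is $2$-connected, the block decomposition of Lemma~\ref{block_lemma} is useless here, so it really remains to prove the two upper bounds $\omega(I_{\Mc_{W_d}})\le 4$ for $d$ odd and $\omega(I_{\Mc_{W_d}})\le 2$ for $d$ even.

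For the odd case, fix an edge-replication $H=(W_d)^{(e)}_{\ab}$ and two $k$-edge-colorings $f,g$ of $H$ with $k>4$; by Theorem~\ref{thm:degree} it suffices to prove $f\sim_4 g$. The spoke-copies of $H$ are pairwise adjacent (they all meet the hub), hence $f$ and $g$ color them injectively, and by transpositions of colors --- each a $2$-colored move --- we may assume $f$ and $g$ use the same color on every spoke-copy. What is left is to reconcile the two colorings on the rim, which inside $L(H)$ is a cyclic chain of cliques $B_1,\dots,B_{d-1}$ with consecutive cliques joined completely, where $B_{i-1}\cup B_i$ must avoid the colors of the spoke-copies at the rim vertex they share. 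I would do this by Kempe-type swaps propagated around the cycle, allowing each move to touch the two colors being swapped together with, when the swap is blocked at some vertex, the (at most two) colors of the spoke-copies there --- at most four colors per move, the even length of the rim being exactly what can force the full four rather than three. The main local obstacle is to make this routing precise: one must guarantee that a palette of size $4$ always suffices to push the recoloring past the tightest cliques and to close it up around the cycle. I expect an induction on $\sum_i a_i$, with a careful choice of the clique at which the propagation starts, to handle this.

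For the even case, $W_d$ is not line perfect once $d\ge 6$ (its rim $C_{d-1}$ is an odd cycle of length $\ge 5$), so Theorem~\ref{line perfectcly contractile} does not apply and a direct argument is needed. I would prove $\omega(I_{\Mc_{W_d}})\le 2$ by exhibiting a quadratic Gröbner basis of $I_{\Mc_{W_d}}$: the matchings of $W_d$ have a transparent shape --- such a matching contains at most one spoke $\{i,d\}$, and deleting that spoke (or none) leaves a matching of the path $C_{d-1}-i$ (respectively of $C_{d-1}$) --- so for each pair $(M,N)$ of matchings one can define a canonical ``sorted'' pair $(M',N')$ with $M\cup N=M'\cup N'$ as multisets of edges, the corresponding quadratic binomials $x_Mx_N-x_{M'}x_{N'}$ lying in $I_{\Mc_{W_d}}$; I would then choose a monomial order making these the leading terms of a Gröbner basis and verify Buchberger's criterion, the overlaps (which correspond to triples of matchings) being resolved by the path/cycle structure. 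Equivalently one could show via Theorem~\ref{thm:degree_general} that any two proper colorings of any vertex-replication of $L(W_d)$ are Kempe equivalent. I regard the even case as the genuine difficulty: it is not a special instance of Conjecture~\ref{conj:non-perfect}, and because $\omega(I_{\Mc_{C_{d-1}}})\le\omega(I_{\Mc_{W_d}})$ by Proposition~\ref{subgraph prop}, proving it in particular requires showing that matching polytopes of odd cycles have quadratically generated toric ideals --- a natural, and I expect strictly easier, sub-step to settle first.
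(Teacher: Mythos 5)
This statement is a \emph{conjecture} in the paper, not a theorem: the authors prove only the lower bound $\omega(I_{\Mc_{W_d}})\ge 4$ for odd $d$ (via the odd subdivision of $W_5=G_1$, exactly as you describe) and verify the rest computationally for $d\le 10$. Your lower-bound paragraph is correct and coincides with the paper's argument, but the two upper bounds --- which you yourself identify as the real content --- are not actually proved in your proposal, and they are precisely what remains open.

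Concretely: for odd $d$, the claim $\omega(I_{\Mc_{W_d}})\le 4$ is a special case of Conjecture \ref{conj:non-perfect}, which the paper leaves open; your ``Kempe-type swaps propagated around the cycle'' with a four-color palette is a plan, not an argument --- the crucial step (``one must guarantee that a palette of size $4$ always suffices to push the recoloring past the tightest cliques and to close it up around the cycle'') is exactly the difficulty, and you give no mechanism for it, nor do you address why the reduction to $k$-edge-colorings of arbitrary edge-replications $(W_d)^{(e)}_{\ab}$ (where spoke multiplicities can exceed $1$, so spoke-copies need not be colored ``injectively'' in the sense you use) goes through. For even $d$, no quadratic Gr\"obner basis is exhibited, no monomial order is specified, and Buchberger's criterion is not checked; you even flag the sub-problem $\omega(I_{\Mc_{C_{d-1}}})\le 2$ for odd cycles as something you ``expect'' to be easier but do not settle. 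Note that $L(C_{d-1})=C_{d-1}$ is an odd hole for $d\ge 6$, so this already falls outside every quadraticity criterion the paper uses (perfect orderability, Proposition \ref{thm:app}, Theorem \ref{line perfectcly contractile}). As it stands, the proposal establishes only what the paper already states ($\ge 4$ for odd $d$, $\ge 2$ always) and leaves both upper bounds as open as they were.
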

By computational experiments, we can confirm this conjecture for $d \leq 10$.

Finally, we discuss perfect matching polytopes.
Since $\Pc_G$ is a face of $\Mc_G$ for any graph $G$, 
we have the following from Theorem \ref{thm:degree}
and Proposition \ref{faceprop}.

\begin{Proposition}\label{pm gene}
    Let $G$ be a graph with $n$ edges.
Then $\omega(I_{\Pc_G}) \leq r$ if and only if for any $\ab \in \ZZ_{\geq 0}^n$ and for any $k$-edge-colorings $f$ and $g$ of $G^{(e)}_{\ab}$ such that each color of $f$ and $g$ corresponds to a perfect matching of $G$, one has $f \sim_r g$.
\end{Proposition}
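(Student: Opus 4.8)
The plan is to transfer the statement of Theorem~\ref{thm:degree} to the face $\Pc_G$ of $\Mc_G$ and to identify, on the combinatorial side, exactly which edge-colorings of $G^{(e)}_{\ab}$ correspond to the lattice points of (dilations of) $\Pc_G$. First I would recall that $\Pc_G$ is a face of $\Mc_G$, cut out by the equations $\sum_{e \ni v} x_e = 1$ for all $v \in V(G)$; the vertices of $\Pc_G$ are exactly the $0/1$ vectors $\rho(M)$ with $M$ a perfect matching of $G$. Then $R[\Pc_G]$ is the polynomial subring of $R[\Mc_G]$ on the variables $x_e$ indexed by edges lying in some perfect matching, and by Proposition~\ref{faceprop}, a set of generators of $I_{\Mc_G}$ restricts to a set of generators of $I_{\Pc_G}$; conversely $I_{\Pc_G} = I_{\Mc_G} \cap R[\Pc_G]$. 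In particular $\omega(I_{\Pc_G}) \le \omega(I_{\Mc_G})$, but more importantly the characterization of generators in terms of colorings specializes.

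Next I would run the argument of Theorem~\ref{thm:degree_general}/Theorem~\ref{thm:degree} verbatim but restricted to the face. The key observation is the dictionary: a monomial $\xb_f = x_{S_{i_1}}\cdots x_{S_{i_k}}$ of $R[\Pc_G]$ corresponds to a $k$-edge-coloring $f$ of $G^{(e)}_{\ab}$ (for the appropriate multiplicity vector $\ab$) in which \emph{every} color class $M^{(e)}(f,j)$ is (the edge set of) a perfect matching of $G$, since each variable $x_{S}$ appearing must be a vertex of $\Pc_G$, i.e.\ $S \in \pmset(G)$. Indeed, $\xb_f \in R[\Pc_G]$ iff all color classes of $f$ are perfect matchings. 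So the binomials $\xb_f - \xb_g$ generating $I_{\Pc_G}$ are precisely those where $f,g$ are $k$-edge-colorings of some $G^{(e)}_{\ab}$ all of whose color classes are perfect matchings. Then: $\omega(I_{\Pc_G}) \le r$ iff every such $\xb_f - \xb_g$ is generated by binomials of degree $\le r$ in $I_{\Pc_G}$, and the "only if" direction of Theorem~\ref{thm:degree_general} produces, via the binomial-ideal expansion \eqref{tenkai}, a chain $f = g_0', g_1', \dots, g_s'$ with $\xb_{g_s'} = \xb_g$ where consecutive colorings differ by a $k_i$-colored subgraph with $k_i \le r$ — and crucially each intermediate coloring $g_i'$ again has all color classes perfect matchings, because the corresponding monomial lies in $R[\Pc_G]$ (as each step's binomial and cofactor come from $I_{\Pc_G} \subset R[\Pc_G]$). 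For the "if" direction, the telescoping decomposition $\xb_f - \xb_g = \sum_i \xb_{f_i|_{\overline{S_i}}}(\xb_{f_{i-1}|_{S_i}} - \xb_{f_i|_{S_i}})$ lands in $R[\Pc_G]$ term by term, since restricting a coloring with all-perfect-matching classes to a subset of colors still gives all-perfect-matching classes. Finally I would invoke Theorem~\ref{thm:degree}'s translation that $G^{(e)}_{\ab}$ is the same object whether viewed via edges of $G$ or vertices of $L(G)$, and that "differ by an $m$-colored subgraph" is the same notion in both pictures.

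The main obstacle — really the only place needing care — is checking that the chain of colorings produced in the "only if" direction stays inside the face, i.e.\ that no intermediate $g_i'$ acquires a color class that is a non-perfect matching (or worse, not a matching). This is handled by the observation that the irreducible binomials $\xb_{f_i} - \xb_{g_i}$ in \eqref{tenkai} and the cofactor monomials $\xb^{\wb_i}$ all lie in $R[\Pc_G]$ once $F = \xb_f - \xb_g \in I_{\Pc_G}$ and we use a generating set of $I_{\Pc_G}$ (not of $I_{\Mc_G}$); hence every partial product $\xb^{\wb_i}\xb_{g_i}$ that realizes an intermediate coloring is a product of variables $x_S$ with $S$ a vertex of $\Pc_G$, so all its color classes are perfect matchings. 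With that point settled, the rest is a line-by-line adaptation. I would therefore state the proof as: "Apply Proposition~\ref{faceprop} and repeat the proof of Theorem~\ref{thm:degree_general} with $R[\Pc_G]$ in place of $R[\Sc_{L(G)}]$, using that $\xb_f \in R[\Pc_G]$ if and only if every color class of $f$ is a perfect matching of $G$, and that this property is preserved under restriction to a subset of colors and under the rewriting \eqref{tenkai}."
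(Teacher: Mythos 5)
Your proposal is essentially the paper's (implicit) argument: the paper offers no written proof beyond citing Theorem~\ref{thm:degree} and Proposition~\ref{faceprop}, and your plan --- identify the lattice points of $\Pc_G$ with perfect matchings, identify monomials of $R[\Pc_G]$ with edge-colorings all of whose classes are perfect matchings, and rerun the proof of Theorem~\ref{thm:degree_general} inside the face --- is exactly the intended fleshing-out. The ``only if'' direction is handled correctly: using a binomial generating set of $I_{\Pc_G}$ itself, every monomial in the expansion \eqref{tenkai} lies in $R[\Pc_G]$, so the resulting chain of colorings stays inside the face.

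The one step you under-justify is in the ``if'' direction, where you assert that the telescoping decomposition ``lands in $R[\Pc_G]$ term by term, since restricting a coloring with all-perfect-matching classes to a subset of colors still gives all-perfect-matching classes.'' The hypothesis $f \sim_r g$ only constrains the endpoints: by the paper's definition of $\sim_r$, the intermediate colorings $f_1,\dots,f_{t-1}$ are arbitrary $k$-edge-colorings of $G^{(e)}_{\ab}$, so your stated reason does not apply to them. The claim is nevertheless true, for a reason you should make explicit: if $f$ decomposes $G^{(e)}_{\ab}$ into $k$ perfect matchings, then $G^{(e)}_{\ab}$ is $k$-regular, and every proper $k$-edge-coloring of a $k$-regular multigraph assigns all $k$ colors at each vertex, so all of its color classes are perfect matchings; hence every intermediate coloring automatically lies in the face. (Alternatively, one can sidestep the issue entirely by applying the retraction $R[\Mc_G] \to R[\Pc_G]$, $x_S \mapsto 0$ for $S \notin \pmset(G)$, to the telescoping identity and noting that a binomial of $I_{\Mc_G}$ cannot have exactly one of its two monomials in $R[\Pc_G]$ --- this is precisely the mechanism behind Proposition~\ref{faceprop}.) With that observation supplied, your proof is complete and coincides with the paper's route.
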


\begin{Proposition}\label{p to m}
    Let $G$ be a graph on $[d]$ with $n$ edges.
    Then we have the following:
\begin{itemize}
    \item[{\rm (a)}]
    $\omega (I_{\Pc_G})  \le \omega (I_{\Mc_G}) ${\rm ;}
    \item[{\rm (b)}]
    There exists a graph $G'$ on $[2d]$ with $2n+d$ edges such that $\omega (I_{\Mc_G})  \le \omega (I_{\Pc_{G'}}) $.
\end{itemize}
    
\end{Proposition}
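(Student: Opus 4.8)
For part (a), the key observation is that $\Pc_G$ is a face of $\Mc_G$: indeed, a matching $M$ of $G$ is perfect if and only if it has the maximum possible size $d/2$, so $\Pc_G = \Mc_G \cap \{x : \sum_{i=1}^n x_i = d/2\}$ is the face of $\Mc_G$ cut out by the supporting hyperplane maximizing the linear functional $\sum x_i$. The inequality $\omega(I_{\Pc_G}) \le \omega(I_{\Mc_G})$ is then immediate from Proposition \ref{faceprop}. (This is already essentially stated in the excerpt before Proposition \ref{pm gene}, so part (a) is just a one-line invocation.)

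For part (b), the plan is to construct $G'$ from $G$ so that every matching of $G$ extends, in an essentially unique way, to a perfect matching of $G'$, and so that the matching polytope $\Mc_G$ reappears as a face of $\Pc_{G'}$. The natural candidate: take two disjoint copies of the vertex set $[d]$, say $[d]$ and $\{1',\dots,d'\}$, put a copy of $G$ on $[d]$, put another copy of $G$ on $\{1',\dots,d'\}$ (so $2n$ edges so far), and add the $d$ ``vertical'' edges $\{i,i'\}$ for $i \in [d]$; this gives a graph $G'$ on $2d$ vertices with $2n+d$ edges. The point is that a perfect matching of $G'$ restricts to a matching $M$ on the first copy, a matching $M'$ on the second copy, and a set of vertical edges covering exactly the vertices missed by $M$ and by $M'$; since a vertical edge $\{i,i'\}$ is usable only when $i$ is uncovered in the first copy \emph{and} $i'$ is uncovered in the second copy, choosing $M$ in the first copy and then taking $M'$ to be the \emph{mirror image} of $M$ in the second copy forces every vertex not covered by $M$ to be matched vertically, and this is the only freedom available once $M$ is fixed on the first copy together with the requirement that the second-copy matching mirror it. I would then identify $\Mc_G$ with the face of $\Pc_{G'}$ obtained by requiring, for each edge $e_i$ of $G$, that its first-copy coordinate and its mirror second-copy coordinate agree (equivalently, by maximizing an appropriate linear functional that is constant on this symmetric locus), and check that the vertices of this face are exactly the $\rho(M \cup \mathrm{mirror}(M) \cup (\text{vertical edges on } V \setminus M))$ for $M \in M(G)$, which project bijectively and affinely onto the vertices $\rho(M)$ of $\Mc_G$. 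Then Proposition \ref{faceprop} gives $\omega(I_{\Mc_G}) \le \omega(I_{\Pc_{G'}})$.

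The main obstacle is verifying that the symmetric locus really is a \emph{face} of $\Pc_{G'}$ (not merely an affine slice) and that its vertex set is exactly what is claimed, with no extra vertices coming from perfect matchings of $G'$ that fail to be symmetric. Concretely one must exhibit a cost vector $c \in \RR^{2n+d}$ such that, among all perfect matchings of $G'$, the minimizers of $c$ are precisely the symmetric ones $M \cup \mathrm{mirror}(M) \cup (\text{verticals})$; a workable choice assigns cost $0$ to every vertical edge, cost $0$ to edge $e_i$ in the first copy, and a small positive cost to $e_i$ in the second copy (so that the optimum uses as few second-copy non-vertical edges as possible) — but one has to confirm this forces mirroring rather than just emptying the second copy, which may require instead a cost that penalizes the \emph{symmetric difference} between the two copies. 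I expect this bookkeeping — pinning down the exact supporting hyperplane and ruling out stray vertices — to be the only genuinely delicate part; everything else (the affine isomorphism of the face with $\Mc_G$, the edge and vertex counts, the final appeal to Proposition \ref{faceprop}) is routine.
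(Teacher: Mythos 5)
Part (a) is fine and is exactly the paper's one\--line argument via Proposition \ref{faceprop}. The genuine gap is in part (b). Your graph $G'$ (two copies of $G$ plus the $d$ vertical edges) is the same one the paper uses, but your key structural claim about it is false: once a matching $M_1$ is chosen in the first copy, the second\--copy matching is \emph{not} forced to mirror it. A perfect matching of $G'$ is precisely an arbitrary pair of matchings $(M_1,M_2)$ of $G$ with $V(M_1)=V(M_2)$ together with the forced vertical edges on the common uncovered set. Consequently the ``symmetric'' perfect matchings never span a face of $\Pc_{G'}$ once $G$ has two distinct matchings covering the same vertex set. Concretely, let $G=C_4$ with perfect matchings $M=\{12,34\}$ and $N=\{23,14\}$, and for matchings $A,B$ with $V(A)=V(B)$ let $P_{A,B}$ denote the perfect matching of $G'$ built from $A$ in the first copy and $B$ in the second. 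Then $\rho(P_{M,M})+\rho(P_{N,N})=\rho(P_{M,N})+\rho(P_{N,M})$, so the midpoint of two symmetric vertices is also the midpoint of two non\--symmetric vertices; any linear functional maximized at all symmetric perfect matchings is therefore also maximized at $P_{M,N}$. Hence no cost vector of the kind you are looking for exists: the diagonal locus is an affine slice but not a face, and Proposition \ref{faceprop} cannot be invoked in this direction. This is exactly the step you flagged as delicate, and it is fatal rather than merely delicate.

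The paper proves (b) with the same $G'$ but combinatorially, through Theorem \ref{thm:degree} and Proposition \ref{pm gene} instead of through faces. Assume $r=\omega(I_{\Pc_{G'}})<\omega(I_{\Mc_G})$; by Theorem \ref{thm:degree} there are $k$-edge-colorings $f\not\sim_r g$ of some $G^{(e)}_{\ab}$. Setting $\ab'=(\ab,\ab,\bb)$ with $b_i=k-\deg_{G^{(e)}_{\ab}}(i)$ makes ${G'}^{(e)}_{\ab'}$ a $k$-regular multigraph, so copying $f$ (resp.\ $g$) onto both copies and properly completing on the vertical multiedges yields $k$-edge-colorings $f',g'$ whose color classes are all perfect matchings of $G'$. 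Proposition \ref{pm gene} then gives $f'\sim_r g'$, and restricting each step of the recoloring sequence to the induced submultigraph $G^{(e)}_{\ab}$ yields $f\sim_r g$, a contradiction. If you want to salvage a polytope\--theoretic argument, you would need a construction in which every matching of $G$ extends \emph{uniquely} to a perfect matching of the larger graph; the two\--copies\--plus\--verticals graph does not have that property.
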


\begin{proof}
(a)
Since $\Pc_G$ is a face of $\Mc_G$, 
the assertion follows from Proposition \ref{faceprop}.

(b)
The idea of the proof comes from the argument appearing in e.g., \cite[p.129]{BONAMY2023126}.
Let $G'$ be a graph on $[2d]$ obtained by 
taking two copies of $G$ and adding edges between each vertex and its copy.
Let $r = \omega (I_{\Pc_{G'}})   $ and assume that $r < \omega (I_{\Mc_G}) $.
From Theorem \ref{thm:degree}, there exists 
$\ab \in \ZZ_{\geq 0}^n$ and $k$-edge-colorings $f$ and $g$ of $G^{(e)}_{\ab}$, 
such that $f \not\sim_r g$.
Let $\ab' = (\ab, \ab, \bb) \in \ZZ_{\ge 0}^{2n+d}$ where $\bb =(b_1,\dots,b_d)$
with $b_i = k - \deg_{G_\ab} (i) $.
Then ${G'}^{(e)}_{\ab'}$ is a $k$-regular multigraph.
We define a $k$-edge coloring $f'$ of ${G'}^{(e)}_{\ab'}$ as follows:
\begin{itemize}
    \item 
If $e$ is an edge of one of copies of $G$, then we set $f'(e) = f(e)$.
\item 
If $e_1,\dots, e_s$ with $s= k -  \deg_{G_\ab} (i) $
are edges between the vertex $i$ and its copy, then colors
$f'(e_1),\dots, f'(e_s)$ are distinct each other
and different from $\{f(e) : e \mbox{ is adjacent to }i \mbox{ in } G\}$.
\end{itemize}
Similarly, we define $g'$ from $g$.
Then each color of $f'$ (resp. $g'$) is a perfect matching of $G'$.
From Proposition \ref{pm gene}, we have $f' \sim_r g'$.
Since $G^{(e)}_\ab$ is an induced subgraph of ${G'}^{(e)}_{\ab'}$,  
it follows that $f \sim_r g$, a contradiction.
\end{proof}

Thus we can obtain the following.
\begin{Proposition}\label{prop:conjs}
       Conjecture \ref{conj:non-perfect} is equivalent to Conjecture \ref{conj:non-perfect-matching}.
\end{Proposition}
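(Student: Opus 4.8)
The plan is to prove Proposition \ref{prop:conjs} by a two-way implication, using Proposition \ref{p to m} as the engine in both directions. Recall that Proposition \ref{p to m}(a) gives $\omega(I_{\Pc_G}) \le \omega(I_{\Mc_G})$ for every graph $G$, and Proposition \ref{p to m}(b) produces, for each graph $G$, an auxiliary graph $G'$ with $\omega(I_{\Mc_G}) \le \omega(I_{\Pc_{G'}})$.

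First I would show that Conjecture \ref{conj:non-perfect} implies Conjecture \ref{conj:non-perfect-matching}. Let $G$ be an arbitrary graph. By Proposition \ref{p to m}(a) we have $\omega(I_{\Pc_G}) \le \omega(I_{\Mc_G})$, and by Conjecture \ref{conj:non-perfect} the right-hand side is at most $4$. Hence $\omega(I_{\Pc_G}) \le 4$, which is exactly the assertion of Conjecture \ref{conj:non-perfect-matching}. This direction is immediate and needs no further input.

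Next I would prove the converse: assume Conjecture \ref{conj:non-perfect-matching} and let $G$ be an arbitrary graph. Apply Proposition \ref{p to m}(b) to obtain a graph $G'$ with $\omega(I_{\Mc_G}) \le \omega(I_{\Pc_{G'}})$. Since $G'$ is itself a graph, Conjecture \ref{conj:non-perfect-matching} applied to $G'$ gives $\omega(I_{\Pc_{G'}}) \le 4$, and therefore $\omega(I_{\Mc_G}) \le 4$. As $G$ was arbitrary, Conjecture \ref{conj:non-perfect} follows. Combining the two directions yields the equivalence.

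I do not expect any genuine obstacle here: both implications are short deductions from the already-established Proposition \ref{p to m}, and the only subtlety is to make sure one applies part (b) in the correct direction (starting from an arbitrary $G$ and passing to the larger graph $G'$, not the other way around). The real content of the argument lives in Proposition \ref{p to m}(b), whose proof is given above; once that is in hand, Proposition \ref{prop:conjs} is essentially a formal consequence.
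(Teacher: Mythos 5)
Your proof is correct and is precisely the argument the paper intends: Proposition \ref{p to m}(a) gives the implication from Conjecture \ref{conj:non-perfect} to Conjecture \ref{conj:non-perfect-matching}, and Proposition \ref{p to m}(b) applied to the auxiliary graph $G'$ gives the converse. The paper leaves this deduction implicit ("Thus we can obtain the following"), and your write-up supplies exactly the missing two-line verification.
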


\appendix
\section{Flow polytopes}
\label{sect:app}

Let $Q$ be a directed graph with the vertex set $Q_0$
and the arrow set $Q_1$.
For an arrow $a \in Q_1$, let
$a^-$ be the starting vertex of $a$,
and let 
$a^+$ be the terminating vertex of $a$.
Given an integer vector $\theta \in \ZZ^{Q_0}$ and 
non-negative integer vectors ${\bf l}, {\bf u} \in \ZZ_{\ge 0}^{Q_1}$,
the \textit{flow polytope} associated with $Q, \theta, {\bf l}, {\bf u}$ is the polytope
$$
\nabla(Q, \theta, {\bf l}, {\bf u})
=
\left\{
x \in \RR^{Q_1} : {\bf l} \le x \le {\bf u},
\theta (v) =\sum_{a^+ =v} x(a) - \sum_{a^- =v} x(a) 
\mbox{ for } \forall v \in Q_0 
\right\}.
$$

We see that the matching polytope of a bipartite graph is a flow polytope.
\begin{Proposition}
For any bipartite graph $G$, the matching polytope $\Mc_G$ is isomorphic to a flow polytope.
\end{Proposition}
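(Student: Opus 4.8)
The plan is to realize the matching polytope of a bipartite graph $G$ as a flow polytope by a standard construction that turns the edges of $G$ into arrows and adds a source and a sink to absorb the slack coming from unmatched vertices. Write $V(G) = A \sqcup B$ for the bipartition. First I would build a directed graph $Q$ as follows: introduce two new vertices $s$ and $t$; for each edge $\{a,b\}$ of $G$ with $a \in A$, $b \in B$, put an arrow from $a$ to $b$; for each vertex $a \in A$ put an arrow from $s$ to $a$; for each vertex $b \in B$ put an arrow from $b$ to $t$. So $Q_0 = A \sqcup B \sqcup \{s,t\}$ and $Q_1$ consists of $n = |E(G)|$ ``edge arrows'' together with $|A| + |B|$ ``slack arrows''.

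Next I would choose the data $\theta, {\bf l}, {\bf u}$. Take ${\bf l} = {\bf 0}$ on every arrow, ${\bf u} = {\bf 1}$ on every arrow, and $\theta(v) = 0$ for every $v \in A \cup B$, leaving $\theta(s) = -|A|$ and $\theta(t) = |B|$ — or, to keep the polytope bounded in a clean way, cap the $s$-arrows and $t$-arrows by $1$ as well and set $\theta(s)$, $\theta(t)$ accordingly; the key point is that the conservation law at each $a \in A$ forces the flow out of $a$ along its edge arrows to equal the flow into $a$ from $s$, which is at most $1$, and similarly at each $b \in B$. Thus an integer point of $\nabla(Q,\theta,{\bf l},{\bf u})$ restricted to the edge arrows is exactly a $(0,1)$-vector selecting a set of edges with at most one edge at each vertex, i.e.\ a matching of $G$; conversely every matching extends uniquely to a flow by setting each slack arrow equal to whether its endpoint is covered. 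The projection $\RR^{Q_1} \to \RR^{E(G)}$ onto the edge-arrow coordinates is an affine (indeed coordinate) map that is a bijection on lattice points and sends vertices to vertices, so it induces an isomorphism (unimodular equivalence) $\nabla(Q,\theta,{\bf l},{\bf u}) \cong \Mc_G$.

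I expect the main obstacle to be purely bookkeeping rather than conceptual: one must check that the projection really is a bijection on the \emph{polytopes} and not merely on their lattice points — that is, that no extra (non-lattice) vertices of the flow polytope are created and that the slack coordinates are genuinely determined by the edge coordinates on all of $\nabla$, not just at integer points. This follows because the slack-arrow values are expressed as explicit $\ZZ$-linear (in fact $\{0,1\}$-coefficient) functions of the edge-arrow values via the flow-conservation equations at the vertices of $A$ and $B$, so the linear map $\varphi$ sending $x$ to its edge coordinates has a linear section; hence $\varphi$ restricts to an isomorphism of polytopes onto its image, and one identifies the image with $\Mc_G$ by matching vertices. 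I would close by remarking that bipartiteness is used exactly to orient every edge from $A$ to $B$ consistently, which is what makes the incidence structure a genuine flow; for non-bipartite $G$ this construction breaks down, consistent with the fact that $\Mc_G$ need not be a flow polytope in general.
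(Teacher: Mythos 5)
There is a genuine gap, and it sits exactly where you chose the data $\theta$. In the paper's definition of $\nabla(Q,\theta,\mathbf{l},\mathbf{u})$ the conservation law is imposed at \emph{every} vertex of $Q_0$, including your terminals $s$ and $t$. With $\theta\equiv 0$ on $A\cup B$, summing the conservation equations at the internal vertices shows that the total outflow of $s$ equals $\sum_{e}x(e)$, the total weight of the fractional matching --- a quantity that is \emph{not constant} on $\Mc_G$. Hence no fixed value of $\theta(s)$ can work: any choice cuts $\Mc_G$ with the hyperplane $\sum_e x(e)=-\theta(s)$. Your concrete choice $\theta(s)=-|A|$, $\theta(t)=|B|$ is worse still: first, a flow polytope is empty unless $\sum_{v}\theta(v)=0$, which fails whenever $|A|\neq|B|$; second, even when $|A|=|B|$, the constraint $\sum_{a\in A}x(s,a)=|A|$ together with the caps $x(s,a)\le 1$ forces every $s$-arc to be saturated, and then conservation at each $a\in A$ forces $\sum_{e\ni a}x(e)=1$. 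You would obtain (a relative of) the perfect matching polytope, not $\Mc_G$. The hedge ``set $\theta(s),\theta(t)$ accordingly'' cannot repair this, because the obstruction is structural: a two-terminal network with $\theta=0$ at internal vertices parametrizes flows of a \emph{fixed} value.

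Two standard fixes exist. One is to add a return arc from $t$ to $s$ with a large enough capacity and take $\theta\equiv 0$ (a circulation polytope); then the flow value is a free coordinate and every matching extends. The other is what the paper does: keep a single auxiliary vertex $v_0$, put $\theta=\mp 1$ at the original vertices so that conservation at $v\in V$ reads $y(v)+\sum_{e\ni v}x(e)=1$ with a slack arc value $y(v)\in[0,1]$, and let $\theta(v_0)$ absorb the (automatically constant) difference $\sum_{V_2}y-\sum_{V_1}y=|V_2|-|V_1|$. Your closing argument --- that the slack coordinates are affine functions of the edge coordinates, so the coordinate projection is an isomorphism of polytopes --- is sound and is a fine substitute for the paper's dimension count, but it only becomes applicable once $\theta$ is chosen so that the lifted polytope actually projects onto all of $\Mc_G$.
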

\begin{proof}
Let $G$ be a bipartite graph on the vertex set $V = V_1 \sqcup V_2$
and the edge set $E$.
It is known that the matching polytope $\mathcal{M}_G$ of $G$
coincides with 
$$
\left\{
x \in \RR^E : 
\begin{array}{cl}
x(e) \ge 0  & \mbox{for } \forall e \in E\\
\sum_{e \ni v} x(e) \le 1 & \mbox{for } \forall v \in V
\end{array}
\right\}.
$$
Hence we have
$$
{\mathcal M}_G
=
\left\{
x \in \RR^E : 
\begin{array}{cl}
x(e) \ge 0  & \mbox{for } \forall e \in E\\
y(v) + \sum_{e \ni v} x(e) = 1 & \mbox{for } \forall v \in V\\
y(v) \ge 0 & \mbox{for } \forall v \in V
\end{array}
\right\}.
$$
Thus $\mathcal{M}_G$ is a projection of the polytope
$$
\mathcal{P} = 
\left\{
(x, y) \in \RR^{E \cup V} : 
\begin{array}{cl}
x(e) \ge 0  & \mbox{for } \forall e \in E\\
y(v) + \sum_{e \ni v} x(e) = 1 & \mbox{for } \forall v \in V\\
y(v) \ge 0 & \mbox{for } \forall v \in V
\end{array}
\right\}.
$$
Note that, if $(x,y) \in {\mathcal P}$, then we have
$$
\sum_{v \in V_2} y(v) - 
\sum_{v \in V_1} y(v) =
\sum_{v \in V_2} \left(1 - \sum_{e \ni v} x(e) \right)
-
\sum_{v \in V_1} \left(1 - \sum_{e \ni v} x(e)\right)
= |V_2| - |V_1|.
$$
Hence
${\mathcal P}$ is the flow polytope $\nabla(Q, \theta, {\bf 0}, {\bf 1})$
where $Q$ is the directed graph
on the vertex set $Q_0= V_1 \cup V_2 \cup \{v_0\}$ and the arrow set
$$Q_1=
\{
(i,j) : i \in V_1, j \in V_2
\}
\cup 
\{
(i,v_0) : i \in V_1
\}
\cup 
\{
(v_0,j) : j \in V_2
\},
$$
and $\theta = (-1,\dots,-1, 1,\dots, 1, |V_2| - |V_1|) \in \ZZ^{V_1 \cup V_2 \cup \{v_0\}}$.
Since ${\mathcal M}_G$ is a projection of ${\mathcal P}$ with
$$
\dim {\mathcal P} = 
|Q_1| - |Q_0| +1
=
(|E|+|V|) - (|V|+1) +1 = |E|
=
\dim {\mathcal M}_G,$$
it follows that ${\mathcal M}_G$ is isomorphic to ${\mathcal P}=\nabla(Q, \theta, {\bf 0}, {\bf 1})$.
\end{proof}

\subsection*{Acknowledgment}
This work was supported by 
JSPS KAKENHI 22K13890, 24K00534, 26K00618 and 26K16970.

\bibliographystyle{plain}
\bibliography{bibliography}

@article {OhsugiShibataTsuchiya2023,
    AUTHOR = {Ohsugi, Hidefumi and Shibata, Kazuki and Tsuchiya, Akiyoshi},
     TITLE = {Perfectly contractile graphs and quadratic toric rings},
   JOURNAL = {Bull. Lond. Math. Soc.},
  FJOURNAL = {Bulletin of the London Mathematical Society},
    VOLUME = {55},
      YEAR = {2023},
    NUMBER = {3},
     PAGES = {1264--1274},
      ISSN = {0024-6093,1469-2120},
   MRCLASS = {13F65 (05C17 13A70 52B20)},
  MRNUMBER = {4605669},
       DOI = {10.1112/blms.12789},
       URL = {https://doi.org/10.1112/blms.12789},
}

@misc{OhsugiTsuchiya2023Kempe,
Author = {Hidefumi Ohsugi and Akiyoshi Tsuchiya},
Title = {Kempe equivalence and quadratic toric rings},
Year = {2023, arXiv:2303.12824},
Eprint = {},
}

@article {Asratian2009,
    AUTHOR = {Asratian, A. S.},
     TITLE = {A note on transformations of edge colorings of bipartite
              graphs},
   JOURNAL = {J. Combin. Theory Ser. B},
  FJOURNAL = {Journal of Combinatorial Theory. Series B},
    VOLUME = {99},
      YEAR = {2009},
    NUMBER = {5},
     PAGES = {814--818},
      ISSN = {0095-8956,1096-0902},
   MRCLASS = {05C15 (05C70)},
  MRNUMBER = {2522594},
MRREVIEWER = {Guanghui\ Wang},
       DOI = {10.1016/j.jctb.2009.01.001},
       URL = {https://doi.org/10.1016/j.jctb.2009.01.001},
}

@book {HHObook,
    AUTHOR = {Herzog, J\"{u}rgen and Hibi, Takayuki and Ohsugi, Hidefumi},
     TITLE = {Binomial ideals},
    SERIES = {Graduate Texts in Mathematics},
    VOLUME = {279},
 PUBLISHER = {Springer, Cham},
      YEAR = {2018},
     PAGES = {xix+321},
      ISBN = {978-3-319-95347-2; 978-3-319-95349-6},
   MRCLASS = {13F20 (13-02 13P10 13P25 14M25 52B20)},
  MRNUMBER = {3838370},
MRREVIEWER = {Thomas\ Kahle},
       DOI = {10.1007/978-3-319-95349-6},
       URL = {https://doi.org/10.1007/978-3-319-95349-6},
}

@article {dartfree,
    AUTHOR = {Sales, Cl\'{a}udia Linhares and Maffray, Fr\'{e}d\'{e}ric},
     TITLE = {On dart-free perfectly contractile graphs},
   JOURNAL = {Theoret. Comput. Sci.},
  FJOURNAL = {Theoretical Computer Science},
    VOLUME = {321},
      YEAR = {2004},
    NUMBER = {2-3},
     PAGES = {171--194},
      ISSN = {0304-3975,1879-2294},
   MRCLASS = {05C17 (68R10)},
  MRNUMBER = {2076143},
MRREVIEWER = {Dieter\ Kratsch},
       DOI = {10.1016/j.tcs.2003.11.026},
       URL = {https://doi.org/10.1016/j.tcs.2003.11.026},
}

@article {MOS,
    AUTHOR = {Matsuda, Kazunori and Ohsugi, Hidefumi and Shibata, Kazuki},
     TITLE = {Toric rings and ideals of stable set polytopes},
   JOURNAL = {Mathematics},
  FJOURNAL = {Mathematics},
    VOLUME = {7},
      YEAR = {2019},
     PAGES = {613},
       DOI = {10.3390/math7070613},
       URL = {https://doi.org/10.3390/math7070613},
}

@article {Asratian1998,
    AUTHOR = {Asratian, A. S.},
     TITLE = {Short solution of {K}otzig's problem for bipartite graphs},
   JOURNAL = {J. Combin. Theory Ser. B},
  FJOURNAL = {Journal of Combinatorial Theory. Series B},
    VOLUME = {74},
      YEAR = {1998},
    NUMBER = {2},
     PAGES = {160--168},
      ISSN = {0095-8956,1096-0902},
   MRCLASS = {05C15},
  MRNUMBER = {1654145},
MRREVIEWER = {D.\ de Werra},
       DOI = {10.1006/jctb.1998.1838},
       URL = {https://doi.org/10.1006/jctb.1998.1838},
}

@article {Asratyan1991,
    AUTHOR = {Asratian, A. S. and Mirumyan, A. N.},
     TITLE = {Transformations of edge colorings of a bipartite multigraph
              and their applications},
   JOURNAL = {Dokl. Akad. Nauk SSSR},
  FJOURNAL = {Doklady Akademii Nauk SSSR},
    VOLUME = {316},
      YEAR = {1991},
    NUMBER = {1},
     PAGES = {11--13},
      ISSN = {0002-3264},
   MRCLASS = {05C15 (05C75)},
  MRNUMBER = {1102761},
MRREVIEWER = {Stanislav\ Jendrol'},
}

@article {Domokos2016,
    AUTHOR = {Domokos, M\'aty\'as and Jo\'o, D\'aniel},
     TITLE = {On the equations and classification of toric quiver varieties},
   JOURNAL = {Proc. Roy. Soc. Edinburgh Sect. A},
  FJOURNAL = {Proceedings of the Royal Society of Edinburgh. Section A.
              Mathematics},
    VOLUME = {146},
      YEAR = {2016},
    NUMBER = {2},
     PAGES = {265--295},
      ISSN = {0308-2105,1473-7124},
   MRCLASS = {14M25 (14L24 16G20 52B20)},
  MRNUMBER = {3475297},
MRREVIEWER = {Justin\ Mark\ Allman},
       DOI = {10.1017/S0308210515000529},
       URL = {https://doi.org/10.1017/S0308210515000529},
}

@article {Diaconis2006,
    AUTHOR = {Diaconis, Persi and Eriksson, Nicholas},
     TITLE = {Markov bases for noncommutative {F}ourier analysis of ranked
              data},
   JOURNAL = {J. Symbolic Comput.},
  FJOURNAL = {Journal of Symbolic Computation},
    VOLUME = {41},
      YEAR = {2006},
    NUMBER = {2},
     PAGES = {182--195},
      ISSN = {0747-7171,1095-855X},
   MRCLASS = {62H15 (14M25)},
  MRNUMBER = {2197154},
MRREVIEWER = {Seth\ Sullivant},
       DOI = {10.1016/j.jsc.2005.04.009},
       URL = {https://doi.org/10.1016/j.jsc.2005.04.009},
}

@article {Yamaguchi2014,
    AUTHOR = {Yamaguchi, Takashi and Ogawa, Mitsunori and Takemura,
              Akimichi},
     TITLE = {Markov degree of the {B}irkhoff model},
   JOURNAL = {J. Algebraic Combin.},
  FJOURNAL = {Journal of Algebraic Combinatorics. An International Journal},
    VOLUME = {40},
      YEAR = {2014},
    NUMBER = {1},
     PAGES = {293--311},
      ISSN = {0925-9899,1572-9192},
   MRCLASS = {62F07 (13P25 62B05)},
  MRNUMBER = {3226827},
MRREVIEWER = {Roberto\ Fontana},
       DOI = {10.1007/s10801-013-0488-z},
       URL = {https://doi.org/10.1007/s10801-013-0488-z},
}

@article {Berge1960,
    AUTHOR = {Berge, Claude},
     TITLE = {Les probl\`emes de coloration en th\'eorie des graphes},
   JOURNAL = {Publ. Inst. Statist. Univ. Paris},
  FJOURNAL = {Publications de l'Institut de Statistique de l'Universit\'e{}
              de Paris},
    VOLUME = {9},
      YEAR = {1960},
     PAGES = {123--160},
   MRCLASS = {05.55},
  MRNUMBER = {124236},
MRREVIEWER = {G.\ A.\ Dirac},
}

@article {strongperfect,
    AUTHOR = {Chudnovsky, Maria and Robertson, Neil and Seymour, Paul and
              Thomas, Robin},
     TITLE = {The strong perfect graph theorem},
   JOURNAL = {Ann. of Math. (2)},
  FJOURNAL = {Annals of Mathematics. Second Series},
    VOLUME = {164},
      YEAR = {2006},
    NUMBER = {1},
     PAGES = {51--229},
      ISSN = {0003-486X,1939-8980},
   MRCLASS = {05C17},
  MRNUMBER = {2233847},
MRREVIEWER = {Carsten\ Thomassen},
       DOI = {10.4007/annals.2006.164.51},
       URL = {https://doi.org/10.4007/annals.2006.164.51},
}

@article {Maf,
    AUTHOR = {Maffray, Fr\'ed\'eric},
     TITLE = {Kernels in perfect line-graphs},
   JOURNAL = {J. Combin. Theory Ser. B},
  FJOURNAL = {Journal of Combinatorial Theory. Series B},
    VOLUME = {55},
      YEAR = {1992},
    NUMBER = {1},
     PAGES = {1--8},
      ISSN = {0095-8956,1096-0902},
   MRCLASS = {05C15},
  MRNUMBER = {1159851},
MRREVIEWER = {Claudia\ Villela},
       DOI = {10.1016/0095-8956(92)90028-V},
       URL = {https://doi.org/10.1016/0095-8956(92)90028-V},
}

@article {Tro,
    AUTHOR = {Trotter, Jr., L. E.},
     TITLE = {Line perfect graphs},
   JOURNAL = {Math. Programming},
  FJOURNAL = {Mathematical Programming},
    VOLUME = {12},
      YEAR = {1977},
    NUMBER = {2},
     PAGES = {255--259},
      ISSN = {0025-5610,1436-4646},
   MRCLASS = {05C99},
  MRNUMBER = {457293},
MRREVIEWER = {M.\ M.\ Sys\l o},
       DOI = {10.1007/BF01593791},
       URL = {https://doi.org/10.1007/BF01593791},
}

@article {Bert,
    AUTHOR = {Bertschi, Marc E.},
     TITLE = {Perfectly contractile graphs},
   JOURNAL = {J. Combin. Theory Ser. B},
  FJOURNAL = {Journal of Combinatorial Theory. Series B},
    VOLUME = {50},
      YEAR = {1990},
    NUMBER = {2},
     PAGES = {222--230},
      ISSN = {0095-8956,1096-0902},
   MRCLASS = {05C15 (05C75)},
  MRNUMBER = {1081226},
MRREVIEWER = {D.\ de Werra},
       DOI = {10.1016/0095-8956(90)90077-D},
       URL = {https://doi.org/10.1016/0095-8956(90)90077-D},
}

@article {nauty,
    AUTHOR = {McKay, Brendan D. and Piperno, Adolfo},
     TITLE = {Practical graph isomorphism, {II}},
   JOURNAL = {J. Symbolic Comput.},
  FJOURNAL = {Journal of Symbolic Computation},
    VOLUME = {60},
      YEAR = {2014},
     PAGES = {94--112},
      ISSN = {0747-7171,1095-855X},
   MRCLASS = {05C60 (05C85 68Q25)},
  MRNUMBER = {3131381},
MRREVIEWER = {Bharati\ Rajan},
       DOI = {10.1016/j.jsc.2013.09.003},
       URL = {https://doi.org/10.1016/j.jsc.2013.09.003},
}

@Misc{M2,
          author = {Grayson, Daniel R. and Stillman, Michael E.},
          title = {Macaulay2, a software system for research in algebraic geometry},
          howpublished = {Available at \url{http://www2.macaulay2.com}}
        }

@article {OhsugiTsuchiya2024Kempe,
Author = {Hidefumi Ohsugi and Akiyoshi Tsuchiya},
Title = {Examining Kempe equivalence via commutative algebra},
   JOURNAL = {Electron. J. Combin.},
  FJOURNAL = {Electronic Journal of Combinatorics},
    VOLUME = {33},
      YEAR = {2026},
     PAGES = {\#P1.8},
}

@article {OHH,
    AUTHOR = {Hidefumi Ohsugi and Takayuki Hibi and J\"urgen Herzog},
     TITLE = {Combinatorial pure subrings},
   JOURNAL = {Osaka J. Math.},
  FJOURNAL = {Osaka Journal of Mathematics},
    VOLUME = {37},
      YEAR = {2000},
     PAGES = {745--757},
}

@article {OhsugiGeom,
    AUTHOR = {Hidefumi Ohsugi},
     TITLE = {A geometric definition of combinatorial pure subrings and Gr\"obner bases of toric ideals of positive roots},
   JOURNAL = {Comment. Math. Univ. St. Pauli},
    VOLUME = {56},
      YEAR = {2007},
     PAGES = {27--44},
}

@article{BONAMY2023126,
title = {On {Vizing's} edge colouring question},
JOURNAL = {J. Combin. Theory Ser. B},
FJOURNAL = {Journal of Combinatorial Theory. Series B},
volume = {159},
pages = {126--139},
year = {2023},
author = {Marthe Bonamy and Oscar Defrain and Tereza Klimo\v{s}ov\'{a} and Aur\'{e}lie Lagoutte and Jonathan Narboni},
}
\end{document}